\newcommand{\R}{\ensuremath{\mathbb{R}}}
\newcommand{\N}{\ensuremath{\mathbb{N}}}
\definecolor{dgreen}{rgb}{0.,0.6,0.}
\numberwithin{equation}{section}
\newtheorem{thm}{Theorem}[section]
\newtheorem{prop}[thm]{Proposition}
\newtheorem{cor}[thm]{Corollary}
\newtheorem{lem}[thm]{Lemma}
\newtheorem{rem}[thm]{Remark}
\title{Courant-sharp Robin eigenvalues for the square \\
-- the case  with small Robin parameter--}
\author{{K. Gittins
\footnote{Universit\'e de Neuch\^atel, Institut de Math\'ematiques, Rue Emile-Argand 11, CH-2000 Neuch\^atel.
Email: \texttt{katie.gittins@unine.ch}}\,
and B. Helffer
\footnote{Laboratoire de Math\'ematiques Jean Leray, Universit\'e de Nantes, 2 rue de la Houssini\`ere, 44 322 Nantes CEDEX 3 - FRANCE.
Email: \texttt{Bernard.Helffer@univ-nantes.fr}}}
}
\date{\today}
\begin{document}

\maketitle
\begin{abstract}	
This article is the continuation of our first work on the determination of the cases where there is
equality in Courant's Nodal Domain theorem in the case of a Robin boundary condition (with Robin parameter $h$).
For the square, our first paper focused on the case where $h$ is large and extended results that were obtained by Pleijel, B\'erard-Helffer, for the problem with a Dirichlet boundary condition.
There, we also obtained some general results about the behaviour of the nodal structure (for planar domains) under a small deformation of $h$, where $h$ is positive and not close to $0$.
In this second paper, we extend results that were obtained by Helffer--Persson-Sundqvist for the Neumann problem to the case where $ h>0$ is small.\\

\end{abstract}

\paragraph{MSC classification (2010):}    35P99, 58J50, 58J37.

\paragraph{Keywords:} Courant-sharp, Robin eigenvalues, square.

\newpage
\tableofcontents
\newpage
\section{Introduction}

Consider a bounded, connected, open set $\Omega \subset \R^m$, $m \geq 2$, with Lipschitz boundary.
Let $h \in \R$, $h \geq 0$.
We consider the Robin eigenvalues of the Laplacian on $\Omega$ with parameter $h$.
That is the values $\lambda_{k,h}(\Omega) \in \R$, $k \in \N$, such that there exists a function $u_k \in H^1(\Omega)$ that satisfies
\begin{align*}
-\Delta u_k(x) &= \lambda_{k,h}(\Omega)u_k(x)\,, \quad x \in \Omega\,, \notag \\
\frac{\partial}{\partial \nu} u_k(x) &+ h\, u_k(x) = 0\,,  \quad x \in \partial\Omega\,, %\label{eR}
\end{align*}
where $\nu$ is the outward-pointing unit normal to $\partial \Omega$.

We recall that the corresponding spectrum is monotonically increasing with respect to $h$ for $h\in [0,+\infty)$,
by the minimax principle. In particular, the Robin eigenvalues with $h=0$ correspond to
the Neumann eigenvalues, $\lambda_k^N(\Omega)$, while those with $h=+\infty$ correspond to the Dirichlet eigenvalues
$\lambda_k^D(\Omega)$.

We consider the Courant-sharp Robin eigenvalues of $\Omega$.
That is, those Robin eigenvalues $\lambda_{k,h}(\Omega)$ that have a corresponding eigenfunction which has exactly $k$
nodal domains, and hence achieves equality in Courant's Nodal Domain theorem. As for the Dirichlet and Neumann eigenvalues,
$\lambda_{1,h}(\Omega)$ and $ \lambda_{2,h}(\Omega)$ are Courant-sharp for all $h \geq 0$.

The question that we first considered in \cite{GH} is whether it is possible to follow the Courant-sharp (Neumann)
eigenvalues with $h=0$ to Courant-sharp (Dirichlet) eigenvalues as $h \to + \infty$.
There we analysed the situation where $h$ is large. Our aim in this paper is to analyse the case where $h$ is small.

As in \cite{GH}, we consider the particular example where $\Omega$ is a square in $\R^2$ of side-length $\pi$  which we denote by $S$.
There, we were able to treat the problem asymptotically as $h\rightarrow +\infty$, corresponding to the Dirichlet limit.
Moreover, we showed that for $h$ large enough, the only Courant-sharp Robin eigenvalues are for $k=1,2,4$
(see also \cite{Pl,BH1} where the Dirichlet case was treated):
 \begin{thm}\label{thm:hlarge}
 There exists $h_1>0$ such that for $h\geq h_1$, the Courant-sharp cases for the Robin problem are the same
 as those for $h=+\infty\,$.
 \end{thm}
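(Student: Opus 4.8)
The plan is to combine a Pleijel-type upper bound that is \emph{uniform} in the Robin parameter (for $h$ large) with a perturbation argument transferring the known Dirichlet picture to large finite $h$.

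First I would reduce the problem to finitely many indices. Suppose $\lambda_{k,h}(S)$ is Courant-sharp, with an eigenfunction $u$ having exactly $k$ nodal domains $D_1,\dots,D_k$. On each \emph{interior} nodal domain (one not meeting $\partial S$), $u$ vanishes on all of $\partial D_i$, so $u|_{D_i}$ is a Dirichlet ground state and the classical Faber--Krahn inequality gives $|D_i|\geq \pi j_{0,1}^2/\lambda_{k,h}$, with no dependence on $h$. The nodal domains meeting $\partial S$ are strung along the one-dimensional boundary and number only $O(\sqrt{\lambda_{k,h}})$, uniformly in $h$. Bounding the interior count by $|S|\,\lambda_{k,h}/(\pi j_{0,1}^2)$ and using $\lambda_{k,h}\leq \lambda_k^D(S)$ together with Weyl's law $\lambda_k^D(S)\sim 4\pi k/|S|$, I obtain
\begin{equation*}
k \;\leq\; \frac{4}{j_{0,1}^2}\,k\,(1+o(1)) \;+\; O(\sqrt{k})\,,
\end{equation*}
which is impossible once $k$ exceeds some $N_0$, since $4/j_{0,1}^2<1$. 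The threshold $N_0$ and a parameter bound $h_0$ can be chosen independently of $h$, so for $h\geq h_0$ only the finitely many cases $k\leq N_0$ can be Courant-sharp.

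Next I would treat the finite window $3\leq k\leq N_0$ by perturbation from the Dirichlet limit $h=+\infty$. As $h\to+\infty$ the Robin resolvent converges to the Dirichlet resolvent, so $\lambda_{k,h}\to \lambda_k^D(S)$ and, after choosing representatives, the eigenfunctions $u_{k,h}$ converge to a Dirichlet eigenfunction $u_k^D$ in $H^1(S)$ and in $C^1$ on compact subsets of the interior. For every $k$ in the window with $k\neq 4$, the level $\lambda_k^D(S)$ is known \emph{not} to be Courant-sharp (Pleijel; B\'erard--Helffer, see \cite{Pl,BH1}), so every eigenfunction of the limiting eigenspace has strictly fewer than $k$ nodal domains. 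Invoking the stability of the nodal partition under small deformations of $h$ established in \cite{GH}, the Robin nodal count cannot jump up to $k$, so these levels are not Courant-sharp for $h$ large. For $k=4$, the Dirichlet eigenfunction $\sin 2x\,\sin 2y$ has four rectangular nodal domains with a non-degenerate nodal set, and the same stability analysis shows the perturbed eigenfunction keeps exactly four nodal domains, so $k=4$ remains Courant-sharp. Together with the fact that $k=1,2$ are Courant-sharp for every $h\geq0$, this matches the Robin Courant-sharp set to the Dirichlet one.

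The main obstacle is this finite-window step, specifically the Dirichlet levels of $S$ carrying multiplicity (e.g.\ $\lambda^D=5,10,\dots$, arising from representations of an integer as $p^2+q^2$). At such a level the limiting eigenspace is multi-dimensional, the limiting eigenfunction selected by $u_{k,h}$ depends on the direction of approach, and the number of nodal domains need not be continuous across the crossing. To control it one must know, for each degenerate level in the window, the nodal counts of \emph{all} eigenfunctions of the eigenspace together with the first-order splitting of the eigenvalue induced by the Robin perturbation; this is precisely the content of the deformation results imported from \cite{GH}. By comparison the uniform Pleijel reduction is routine, the only delicate point being the $h$-independence of the boundary nodal-domain count.
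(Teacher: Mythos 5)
A preliminary remark: this paper does not itself prove Theorem~\ref{thm:hlarge}; the result is recalled from \cite{GH}, where the proof combines exactly the two ingredients you outline (an $h$-uniform Pleijel-type reduction, cf.\ Theorem~\ref{prop:p1} and Section~\ref{s3}, and a perturbation analysis of the Dirichlet limit). So your skeleton is the right one, but your write-up black-boxes precisely the parts that constitute the proof. First, the bound $\mu^{\text{out}}(\Psi)=O(\sqrt{\lambda})$ uniformly in $h$ is not routine for Robin conditions: a nodal domain touching $\partial S$ carries no Dirichlet condition on that portion of its boundary, so Faber--Krahn gives nothing there, and the count requires a Sturm-type argument on the restriction of the eigenfunction to the sides of the square (this is Lemma~\ref{lemmuout}, proven in \cite{GH}); you assert it without argument. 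More seriously, the claim that the nodal count ``cannot jump up'' when passing from $h=+\infty$ to large finite $h$ --- including across the degenerate Dirichlet levels, where one must also control the splitting and fix the labelling via the crossing result (Proposition~\ref{p:crossing}) --- is the entire content of the theorem in \cite{GH}. Invoking ``the deformation results imported from \cite{GH}'' as the engine of the proof of a theorem of \cite{GH} leaves essentially no independent mathematical content in your argument: you have reproduced the statement of the strategy, not a proof.

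There is also one step that is wrong as reasoned: the case $k=4$. The nodal set of $\sin 2x\,\sin 2y$ has an interior crossing point, and nodal partitions are precisely \emph{not} stable under perturbation at such degenerate points --- generically the crossing opens up and the four domains merge into two or three. So ``the same stability analysis'' cannot show that $\lambda_{4,h}$ remains Courant-sharp; if anything, a crossing is the textbook situation where the nodal count changes. The correct argument (recalled in Subsection~\ref{ss:2.3}, from \cite{GH}) is structural: the eigenvalue associated with the pair $(1,1)$ is simple, its eigenfunction is exactly the product $u_1(x)u_1(y)$ for every finite $h$, and since $u_1$ vanishes only at $0$ in $(-\frac\pi2,\frac\pi2)$, the nodal set is exactly the union of the two coordinate axes for all $h$. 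Hence the count $4$ persists for the exact reason that no perturbation argument is needed (nor valid) there.
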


We also obtained the following $h$-independent result:
\begin{thm}\label{prop:p1} Let $h\geq 0$.
If $\lambda_{k,h}(S)$ is an eigenvalue of $S$ with $k \geq 520$, then it is not Courant-sharp.
\end{thm}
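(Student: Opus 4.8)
The plan is to run a Pleijel-type argument that is uniform in $h$, combining the Faber--Krahn inequality on the nodal domains with an explicit lower bound for $k$ coming from the Dirichlet comparison, and then to check that the resulting inequality is incompatible with Courant-sharpness as soon as $k\geq 520$. Throughout, fix $h\geq 0$, write $\lambda:=\lambda_{k,h}(S)$, and let $u_k$ be \emph{any} associated eigenfunction with nodal domains $D_1,\dots,D_\mu$, $\mu=\mu(u_k)$. I split them into interior domains (with $\overline{D_i}\cap\partial S=\emptyset$), side domains (touching exactly one side) and corner domains (touching two adjacent sides). On an interior domain, $u_k$ restricts to a first Dirichlet eigenfunction, so Faber--Krahn gives $|D_i|\geq \pi j_0^2/\lambda$, where $j_0$ is the first positive zero of $J_0$. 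On a side or corner domain $D$, the restriction of $u_k$ is the ground state of the mixed problem with Dirichlet data on the interior nodal arcs and the Robin condition on $\partial D\cap\partial S$; by monotonicity of the Robin eigenvalue in $h$, this ground-state value $\lambda$ is at least the corresponding mixed Dirichlet--Neumann eigenvalue. Reflecting evenly across the straight Neumann portion(s) turns the latter into a first Dirichlet eigenfunction on the doubled (resp.\ quadrupled) domain, so Faber--Krahn yields $|D|\geq \pi j_0^2/(2\lambda)$ for side domains and $|D|\geq \pi j_0^2/(4\lambda)$ for corner domains. Crucially, all three bounds hold for \emph{every} $h\geq 0$.

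Summing over all nodal domains and using $\sum_i|D_i|=|S|=\pi^2$, and writing $\mu_s,\mu_c$ for the numbers of side and corner domains (so $\mu_c\leq 4$), I obtain
\[
\mu \;\leq\; \frac{\pi\lambda}{j_0^2} + \tfrac12\,\mu_s + \tfrac34\,\mu_c \;\leq\; \frac{\pi\lambda}{j_0^2} + \tfrac12\,\mu_s + 3 .
\]
To control $\mu_s$ I bound the number of boundary zeros of $u_k$. The eigenspace of $\lambda$ is spanned by products $\phi_m(x)\psi_n(y)$ of one-dimensional Robin eigenfunctions on $(0,\pi)$ with $\mu_m+\nu_n=\lambda$, so on the side $\{y=0\}$ the trace $x\mapsto u_k(x,0)$ lies in the span of those $\phi_m$ with $\mu_m\leq\lambda$. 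Since $\mu_m\geq m^2$ uniformly in $h\geq 0$, there are at most $\sqrt{\lambda}+1$ such modes, and by the Sturm--Hurwitz bound on the number of zeros of a linear combination of the first $N$ Sturm--Liouville eigenfunctions, the trace has at most $\sqrt{\lambda}$ zeros on $(0,\pi)$. Hence at most $\sqrt{\lambda}+1$ nodal arcs, and \emph{a fortiori} at most that many side domains, meet each side; summing over the four sides gives $\mu_s\leq 4\sqrt{\lambda}+O(1)$, again uniformly in $h$.

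It remains to feed in a lower bound for $k$. Because the spectrum increases with $h$, one has $\lambda=\lambda_{k,h}(S)\leq\lambda_k^D(S)$, whence $k>N^D(\lambda)=\#\{(p,q)\in\Z_{\geq 1}^2:\ p^2+q^2<\lambda\}$, which the standard lattice-point estimate bounds below by $\tfrac{\pi}{4}\lambda-c\sqrt{\lambda}$. Assuming $\lambda$ is Courant-sharp, i.e.\ $\mu=k$, and combining the three displays yields
\[
\Big(\tfrac{\pi}{4}-\tfrac{\pi}{j_0^2}\Big)\lambda \;\leq\; C\sqrt{\lambda}+C',
\]
with explicit $C,C'$ and with $\tfrac{\pi}{4}-\tfrac{\pi}{j_0^2}>0$ since $j_0>2$. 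This forces $\lambda$ below an explicit threshold $\lambda_0$; since $\lambda\geq\lambda_k^N(S)\geq\lambda_{520}^N(S)$ for $k\geq 520$, it then suffices to verify by a finite computation that $\lambda_{520}^N(S)>\lambda_0$, which rules out Courant-sharpness for all $k\geq 520$ and all $h\geq 0$.

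The main obstacle is the uniform-in-$h$ treatment of the boundary nodal domains: one must simultaneously control their individual areas (where the reflection trick together with the monotonicity $\lambda_{k,h}\geq\lambda_{k,0}$ replaces the clean Dirichlet Faber--Krahn) and their number (via the Sturm--Hurwitz count on the traces), since it is precisely the $\sqrt{\lambda}$ boundary correction that determines the threshold. Pinning the constants $c,C,C'$ down sharply enough that the final inequality closes at exactly $k=520$ is the remaining, essentially computational, difficulty.
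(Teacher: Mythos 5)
Your overall skeleton (Dirichlet comparison for the counting function, Faber--Krahn on nodal domains, a Sturm-type zero count on the boundary traces) is the same as the paper's, but the step where you refine it breaks down. Your trichotomy of nodal domains into interior, side (touching exactly one side) and corner (touching two adjacent sides) is not exhaustive: a nodal domain of a square eigenfunction can touch two \emph{opposite} sides, or three, or all four. This is not a pathology --- it happens for honest Robin eigenfunctions at every $h\geq 0$: the product eigenfunction $u_0(x)u_N(y)$ has as nodal domains $N+1$ horizontal strips spanning the square from the left side to the right side (for $h>0$ the first one-dimensional eigenfunction $u_0$ has no zero), so \emph{every} one of its nodal domains falls outside your classification. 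For such a domain the reflection trick gives nothing: the Robin/Neumann part of its boundary lies on two parallel lines, so even reflection never terminates in a bounded domain with pure Dirichlet condition, and no bound $|D|\geq \pi j_0^2/(\kappa\lambda)$ is produced; mixed Dirichlet--Neumann Faber--Krahn inequalities are false in general, which is exactly why Pleijel-type theorems for Neumann/Robin conditions are delicate. Hence your key display $\mu\leq \frac{\pi\lambda}{j_0^2}+\frac12\mu_s+\frac34\mu_c$ is not established for an arbitrary eigenfunction. A second error: $\mu_c\leq 4$ is unjustified, since a domain can touch two adjacent sides without containing the corner, and several nested domains (Stern-type nodal patterns) can touch the same pair of sides, so corner domains are not limited to one per corner.

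The repair is precisely what the paper does, and you already hold the needed ingredient. The paper makes no attempt to bound the area of boundary domains: it splits nodal domains into inner ones (meeting $\partial S$ in at most isolated points) and outer ones, proves $\mu^{\text{out}}(\Psi)\leq 4\sqrt{\lambda}$ by the very Sturm zero count you apply to the traces --- every outer domain, whether it touches one, two opposite, or four sides, must contain a component of the non-vanishing set of the trace on some side, and there are at most about $\sqrt{\lambda}+1$ such components per side --- and then applies Faber--Krahn only to the inner domains, getting $\mu\leq \frac{\pi}{j_0^2}\lambda+4\sqrt{\lambda}$. Combined with the Dirichlet-comparison bound $n>\frac{\pi}{4}\lambda-2\sqrt{\lambda}+2$, which you also derive, this closes the argument. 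Note finally that you leave $c$, $C$, $C'$ and the terminal numerical check unresolved; since the content of the statement is the explicit threshold $520$, that computation is part of the proof, and it is the paper's explicit constants ($4\sqrt{\lambda}$ outer domains, $-2\sqrt{\lambda}$ in the lattice count) that make the inequality fail exactly from $n=520$ onwards.
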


For the square with a Neumann boundary condition, it was shown in \cite{HPS1} that the only Courant-sharp
Neumann eigenvalues are for $k=1,2,4,5,9$. Hence it is natural to ask whether this result also holds for $h\geq 0$ small.
The goal of this paper is to prove the following theorem which was conjectured in \cite{GH}.

\begin{thm}\label{thm:hsmall}
 There exists $h_0>0$ such that for $0<h \leq h_0$, the Courant-sharp cases for the Robin problem are the same, except the fifth one, as those for $h=0$\,.
\end{thm}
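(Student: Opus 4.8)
The plan is to transfer the Neumann analysis of \cite{HPS1} to small $h>0$ by exploiting that the Robin problem on $S$ separates variables, and to isolate the mechanism that destroys the Courant-sharpness of the fifth eigenvalue. First I would reduce to a finite range: by Theorem~\ref{prop:p1}, for every $h\ge 0$ no eigenvalue $\lambda_{k,h}(S)$ with $k\ge 520$ is Courant-sharp, so it suffices to examine $1\le k\le 519$. I would then set up the one-dimensional Robin problem $-f''=\mu f$ on $[0,\pi]$ with $f'(0)=h\,f(0)$ and $f'(\pi)+h\,f(\pi)=0$, whose eigenvalues $\mu_n(h)$ and (real-analytic in $h$) eigenfunctions $f_{n,h}$ satisfy $\mu_n(0)=n^2$, $f_{n,0}=\cos(nx)$ up to normalisation, keep the parity $f_{n,h}(\pi-x)=(-1)^n f_{n,h}(x)$, and have exactly $n$ interior zeros. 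Since $S=[0,\pi]^2$, every Robin eigenpair is built from products $f_{m,h}(x)\,f_{n,h}(y)$ with eigenvalue $\mu_m(h)+\mu_n(h)$. A short perturbative computation gives $\mu_n'(0)=\tfrac{2}{\pi}$ for $n=0$ and $\tfrac{4}{\pi}$ for $n\ge 1$, from which I read off that for $h$ small the ordering and multiplicities of the low eigenvalues are exactly those of the Neumann spectrum; in particular the index-to-mode dictionary ($k=1\leftrightarrow(0,0)$, $k=2,3\leftrightarrow(1,0),(0,1)$, $k=4\leftrightarrow(1,1)$, $k=5,6\leftrightarrow(2,0),(0,2)$, $k=9\leftrightarrow(2,2)$) persists.

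For the persistent cases $k=1,2,4,9$ I would produce explicit Courant-sharp eigenfunctions with robust nodal sets. The products $f_{0,h}f_{0,h}$, $f_{1,h}(x)f_{0,h}(y)$, $f_{1,h}(x)f_{1,h}(y)$ and $f_{2,h}(x)f_{2,h}(y)$ have nodal sets that are finite unions of the vertical and horizontal lines $\{f_{m,h}=0\}$, meeting transversally; these produce respectively $1,2,4$ and $9$ nodal domains, exactly as in the Neumann case, and the transversal grid structure is stable under the small perturbation. Together with Courant's bound this yields Courant-sharpness for $0<h\le h_0$.

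The heart of the matter, and the main obstacle, is the loss of Courant-sharpness at $k=5$. Here the eigenspace is spanned by $f_{2,h}(x)f_{0,h}(y)$ and $f_{0,h}(x)f_{2,h}(y)$, and I must show that for all $(a,b)$ and all small $h>0$ the combination $a\,f_{2,h}(x)f_{0,h}(y)+b\,f_{0,h}(x)f_{2,h}(y)$ has at most $4$ nodal domains. At $h=0$ the extremal member $\cos 2x+\cos 2y=2\cos(x+y)\cos(x-y)$ attains $5$ domains precisely because its nodal set is a single closed curve touching $\partial S$ at the four edge-midpoints, where two nodal arcs cross on the boundary — a non-generic configuration forced by $f_{0,0}\equiv\mathrm{const}$. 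I would show that once $f_{0,h}$ ceases to be constant ($h>0$) these boundary crossings resolve with a definite sign and the five regions reorganise into four, so the count drops to $4$; and, using the $x\leftrightarrow y$ symmetry together with the parities $f_{0,h}(\pi-x)=f_{0,h}(x)$, $f_{2,h}(\pi-x)=f_{2,h}(x)$, that no other choice of $(a,b)$ recovers a fifth domain. The delicate part is quantifying this resolution uniformly in $(a,b)$ and proving that $4$ is the true maximum for every small $h>0$; this is where most of the work lies.

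Finally, for the remaining indices $k\in\{3,6,7,8,10,\dots,519\}$ I would argue that Courant-sharpness is not gained. None of these is Courant-sharp at $h=0$ by \cite{HPS1}, the excluding inequalities there (Faber--Krahn/Pleijel counting tested against the explicit eigenvalues $\mu_m(h)+\mu_n(h)$) hold with strict margins, and since all quantities depend continuously on $h$ the same inequalities persist for $0<h\le h_0$; the only borderline case, $k=5$, is treated above. Taking $h_0$ to be the minimum of the finitely many thresholds produced in these steps completes the proof.
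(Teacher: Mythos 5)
Your reduction to finitely many indices, the $1D$ perturbative computation (which is correct: $\mu_0'(0)=\tfrac 2\pi$, $\mu_n'(0)=\tfrac 4\pi$ for $n\geq 1$), the treatment of $k=1,2,4,9$, and the mechanism you identify for $k=5$ are all sound; indeed Section~\ref{s10} of the paper carries out exactly the boundary-resolution analysis you sketch for $(0,2)$, showing $\theta(h)-\tfrac \pi 4 \sim -\tfrac{\pi h}{8}$ and that the maximal nodal count drops to $4$. However, your final step --- eliminating all $k\in\{3,6,7,8,10,\dots\}$ by asserting that the exclusion arguments of \cite{HPS1} ``hold with strict margins'' and ``persist by continuity'' --- contains the decisive gap, together with a false intermediate claim. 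First, multiplicities do \emph{not} persist: they can only decay (Lemma~\ref{lem:decay}), e.g.\ $25=5^2+0^2=4^2+3^2$ has multiplicity $4$ at $h=0$ and splits into two double eigenvalues for $h>0$ (Proposition~\ref{p:crossing}, Remark~\ref{rem:decay}). This is not a harmless slip: many exclusions in \cite{HPS1} are Leydold-type arguments comparing the global label $n$ with the label $m$ of the same eigenvalue inside a symmetry subspace, and when a multiplicity-$4$ cluster splits these labels shift, so the $h=0$ argument genuinely breaks for $\lambda_{78,h}$, $\lambda_{89,h}$, $\lambda_{116,h}$, $\lambda_{126,h}$ (Section~\ref{s5}).

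Second, and more fundamentally, the quantity on which Courant-sharpness hinges --- the number of nodal domains $\mu(\Phi_{\theta,h})$, maximised over the eigenspace --- is an integer-valued function of $h$ that is \emph{not} continuous; it can jump upward exactly through the boundary and corner degeneracies that you yourself exhibit in the $k=5$ case, and such degeneracies occur for many other pairs $(p,q)$. The hard cases of \cite{HPS1} (e.g.\ $(2,1)$, $(3,2)$, $(4,2)$, $(6,3)$, $(8,4)$, $(4,1)$, $(8,3)$, $(9,4)$, $(10,4)$) are excluded there by nodal-count analyses of the whole family $\Phi_{\theta,p,q}$, not by strict inequalities between $h$-continuous quantities, so there is nothing for your continuity principle to act on. This is precisely where the bulk of the paper's work lies: Sections~\ref{s6}--\ref{s7} develop a perturbation theory (Propositions~\ref{Proposition6.1} and~\ref{Proposition6.2}) proving, under nondegeneracy hypotheses verified when $p$ and $q$ are mutually prime with $p+q$ odd (or reducible to that case by sub-squares, Lemma~\ref{lem8.5}), that the nodal count cannot increase under a small perturbation of $h$; Section~\ref{s9} applies this case by case; and even then one case escapes the hypotheses, namely $(p,q)=(7,9)$ ($\lambda_{116,h}$, at $\tan\theta=\tfrac 79$), for which Section~\ref{s11} shows only that the count increases by at most $4$ (from $32$ to at most $36<116$), which still suffices. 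Your proposal contains no substitute for this machinery, and without it the elimination of the non-Courant-sharp cases for $h>0$ does not go through.
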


In \cite{GH}, we showed that there exists $h_9^* > 0$ such that $\lambda_{9,h}$ is Courant-sharp
for $h \leq h_9^*$ and is not Courant-sharp for $h > h_9^*$.

Here we show that the fifth Robin eigenvalue $\lambda_{5,h}$ is not Courant-sharp for any $h>0$.
It is interesting that there is stability of the Courant-sharp property under a small perturbation of $h$ large,
whereas for $h>0$ small there is no stability for $k=5$.
We remark that we do not give any general information about the Courant-sharp property for the Robin
eigenvalues with $h \in (h_0,h_1)$.

We now outline the strategy of the proof of Theorem~\ref{thm:hsmall}.
By making use of the fact that $h$ is small, we reduce the number of potential Courant-sharp cases that have to be checked from $k \leq 519$, as in \cite{GH}, to $k \leq 208$ as in \cite{HPS1} (see Section~\ref{S:hsmall}).

The strategy of \cite{HPS1} is then to use symmetry properties of the Neumann eigenfunctions and an argument
due to Leydold to further reduce the potential Courant-sharp candidates.
Since the Robin eigenfunctions satisfy analogous symmetry properties (see Section~\ref{s4}), corresponding arguments can be applied to the Robin eigenfunctions when $h$ is sufficiently small (see Section~\ref{s5}).

As in \cite{HPS1}, there are some eigenvalues for which these symmetry arguments do not allow us to conclude whether or not they are Courant-sharp.
In the Robin case, there are also some additional cases that cannot be treated by these symmetry arguments.
In order to deal with the remaining cases, we develop some corresponding results to \cite{GH} for $h$ small.
In particular, under certain hypotheses, we show that for $h>0$ small, under a small perturbation of $h$, the number
of nodal domains cannot increase (see Section~\ref{s6} and Section~\ref{s7}). So if a Neumann eigenvalue is not Courant-sharp, then the corresponding Robin eigenvalue is not Courant-sharp for $h$ sufficiently small.
We apply these results in Section~\ref{s9} to eliminate all but two of the remaining cases.

There are then two outstanding cases for which these arguments do not apply and we have to do a detailed analysis.
One of these cases is $\lambda_{5,h}$ as it is Courant-sharp for $h=0$ and so the fact that the number of nodal domains does not increase for $h$ small is not sufficient to show that this eigenvalue is not Courant-sharp for $h>0$ small.  For the other outstanding case, we do not prove that the number of nodal domains is decreasing but we show that this number does not increase too much under a small perturbation of $h$.
These remaining cases are analysed in Section~\ref{s10} and Section~\ref{s11}.

\textbf{Acknowledgements:}\\
 We are very grateful to Thomas Hoffmann-Ostenhof for useful remarks, to Mikael P. Sundqvist for his communication of figures, and to Alexander Weisse for introducing us to the mathematics software system ``SageMath'' and helping us to produce some graphs of the Robin eigenvalues of the square. KG acknowledges support from the Max Planck Institute for Mathematics, Bonn, from October 2017 to July 2018. BH acknowledges the support of the  Mittag-Leffler Institute, Djursholm, where part of this work has been achieved.

\section{The eigenfunctions of the Robin Laplacian for a square}\label{s4}
\subsection{The $1D$ case in $(-\frac \pi 2,\frac \pi 2)$}
\subsubsection{{The case} $h\geq 0$}
The eigenvalues are determined by
 the unique solution $\alpha_n (h)$ in $[n \pi,(n+1) \pi)$ of
\begin{equation}\label{eq:alphaneven}
\alpha \tan \frac \alpha 2 = h \ell\, ,
\end{equation}
 for $n$ even, and the unique solution $\alpha_n (h)$ in $[n \pi,(n+1) \pi)$ of
\begin{equation}\label{eq:alphanodd}
\frac {\alpha}{h\ell} = -  \tan  \frac \alpha 2\, ,
\end{equation}
 for $n$ odd.\\
The corresponding eigenvalue is $\lambda_{n+1} (h)= \frac{\alpha_n(h)^2}{\pi^2}$ and
a basis of corresponding eigenfunctions is given for $n$ even by
$$
u_{n,h} (x) = \cos \left( \frac{\alpha_n(h) x}{\pi} \right)  \, ,
$$
and for $n$ odd by
$$
u_{n,h}(x) = \sin \left( \frac{\alpha_{n}(h) x}{\pi} \right) \,.
$$
Note that
 $$
 \alpha_n(h) \geq \alpha_n(0)\,.
 $$
 Moreover, when $h=0$, we have
 $$
  \alpha_n (0) = n\pi\,,
 $$
 and recover the standard basis
 $$
 u_{n}^{Ne} (x) = \cos (n x) \mbox{ for } n \mbox{ even},\mbox{ and }   u_{n}^{Ne} (x) = \sin  (n x) \mbox{ for } n \mbox{ odd}.
 $$
 of the Neumann problem.
 We recall that  the eigenfunctions  $u_i$ are alternately symmetric and antisymmetric:
\begin{equation*}%\label{eq:2.8a}
u_i (-x) = (-1)^{i} u_{i}(x)\,.
\end{equation*}

\subsubsection{The case $h<0$}

For $h<0$, $|h|$ small enough and $n \geq1$, there are solutions $\alpha_n$ of \eqref{eq:alphaneven} or \eqref{eq:alphanodd}. But for $h < 0$, the first Robin eigenvalue is negative (see (6) of \cite{FK}), so one should also look for energies
with a purely imaginary $\alpha = i \beta$ and consider
\begin{equation}\label{defbeta}
\beta \tanh \frac  \beta 2  = - h \ell\,.
\end{equation}
The corresponding energy is $-\beta^2$.
This gives one additional solution with $\beta >0$ corresponding to the ground state energy.
This solution is unique and, for $|h|$ small enough, is the only negative eigenvalue.
The ground state can be defined by
\begin{equation*}%\label{form3}
u(x) = \, \cosh \left( \frac{\beta x}{\pi} \right)\,,
\end{equation*}
with $\beta$ defined by \eqref{defbeta}.\\

\subsection{$2D$-case}\label{ss4.1}
For the square $S =(-\frac \pi 2,\frac \pi 2)^2$\,,
an orthonormal basis  of eigenfunctions for the Robin problem is given by
\begin{equation*}%\label{eq:Refnct}
u_{i,j} (x,y) =  u_i(x)u_j(y)
\end{equation*}
where, for $n\in \mathbb N$,  $u_n$ is the $(n+1)$-st eigenfunction of the Robin problem in $(-\frac \pi 2,\frac \pi 2)$.
We denote by $\lambda_{i,j}$ the corresponding eigenvalue $ \pi^{-2} (\alpha_i^2 + \alpha_j^2)$.
Very often, when $i\neq j$, we have to analyse the nodal set of the family
\begin{equation}\label{defphitheta}
\Phi_{\theta,i,j} = \cos \theta \, u_i(x) u_j(y) + \sin \theta \, u_i(y) u_j (x)\,.
\end{equation}
When $\lambda_{i,j}$ has exact multiplicity $2$, this family generates all the corresponding eigenspace.

We now observe  that the following lemma holds.
\begin{lem}
\label{lem:thetareduction}
Let $h\geq 0$. The number of nodal domains of $\Phi_{\theta,p,q}$ is the same as the number of nodal domains of  $\Phi_{\frac \pi 2-\theta,p,q}$.
If  $p+q$ is odd, the number of nodal domains of $\Phi_{\theta,p,q}$ is the same as the number of nodal domains of $\Phi_{\pi-\theta, p,q}$.
\end{lem}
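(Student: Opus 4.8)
The plan is to reduce both assertions to two elementary invariance principles for the counting of nodal domains: the number of nodal domains of a continuous function on $\overline{S}$ is unchanged (i) when the function is composed with an isometry of $S$ that maps $\overline{S}$ onto itself, and (ii) when the function is multiplied by a nonzero constant, in particular by $-1$. Both principles hold because such operations send the nodal set onto the nodal set (respectively fix it) and merely permute the connected components of its complement. The square $S=(-\tfrac\pi2,\tfrac\pi2)^2$ carries the reflections $\sigma(x,y)=(y,x)$ (across the diagonal) and $\rho(x,y)=(-x,y)$ as isometries, and these are exactly the symmetries I would exploit, one for each statement.

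For the first assertion I would compute $\Phi_{\theta,p,q}\circ\sigma$ directly. Since $\sigma$ interchanges the two arguments,
\[
\Phi_{\theta,p,q}(y,x)=\cos\theta\,u_p(y)u_q(x)+\sin\theta\,u_p(x)u_q(y)=\sin\theta\,u_p(x)u_q(y)+\cos\theta\,u_p(y)u_q(x),
\]
which, using $\cos(\tfrac\pi2-\theta)=\sin\theta$ and $\sin(\tfrac\pi2-\theta)=\cos\theta$, is precisely $\Phi_{\frac\pi2-\theta,p,q}(x,y)$. Hence $\Phi_{\frac\pi2-\theta,p,q}=\Phi_{\theta,p,q}\circ\sigma$, and principle (i) gives the equality of nodal domain counts. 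Note that no parity hypothesis on $p,q$ is needed here.

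For the second assertion I would use $\rho$ together with the parity relation $u_i(-x)=(-1)^i u_i(x)$ recalled above, which yields
\[
\Phi_{\theta,p,q}(-x,y)=(-1)^p\cos\theta\,u_p(x)u_q(y)+(-1)^q\sin\theta\,u_p(y)u_q(x).
\]
When $p+q$ is odd the factors $(-1)^p$ and $(-1)^q$ have opposite signs, so, using $\cos(\pi-\theta)=-\cos\theta$ and $\sin(\pi-\theta)=\sin\theta$, the right-hand side equals $\Phi_{\pi-\theta,p,q}$ up to an overall sign. Concretely one checks the two sub-cases: for $p$ odd and $q$ even one gets $\Phi_{\theta,p,q}\circ\rho=\Phi_{\pi-\theta,p,q}$, while for $p$ even and $q$ odd one gets $\Phi_{\theta,p,q}\circ\rho=-\Phi_{\pi-\theta,p,q}$. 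Combining principles (i) and (ii) then gives the claim.

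The argument is purely a symmetry computation, so there is no serious analytic obstacle; the only point requiring care is the sign bookkeeping and, more importantly, the role of the hypothesis. I would emphasise that the assumption $p+q$ odd is essential precisely because it forces $(-1)^p\neq(-1)^q$: when $p+q$ is even the reflection $\rho$ merely reproduces $\Phi_{\theta,p,q}$ (up to sign) rather than $\Phi_{\pi-\theta,p,q}$, so this route yields no relation between the angles $\theta$ and $\pi-\theta$. This is consistent with the lemma restricting the second statement to odd $p+q$.
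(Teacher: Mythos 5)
Your proof is correct and follows essentially the same route as the paper: the diagonal reflection $(x,y)\mapsto(y,x)$ for the first statement, and an axis reflection combined with the parity $u_i(-x)=(-1)^iu_i(x)$ for the second. The only cosmetic difference is that the paper reduces WLOG to $p$ even, $q$ odd and reflects in $y$ (so no sign appears), whereas you reflect in $x$ and absorb an overall factor $-1$ in one sub-case, which is harmless since multiplying by $-1$ preserves the nodal count.
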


\begin{proof}
For the first statement, we observe that
\begin{equation}\label{eq:2.8}
\Phi_{\frac \pi 2-\theta, p,q}(x,y)
 = \Phi_{\theta, p,q}(y,x) =  \Phi_{\theta,q,p}(x,y) \,.
\end{equation}
For the second statement,  we can assume, without loss of generality, that $p$ is even and $q$ is odd. Then
the statement follows directly from the relation (for any $(x,y)\in (-\frac \pi 2,\frac \pi 2)^2$):
\begin{equation}
\label{eq:thetared1}\Phi_{\pi-\theta, p,q}(x,-y)
 = \Phi_{\theta,p,q}(x,y)\,.
\end{equation}
\end{proof}
\begin{rem}\label{remut}
When $p+q$ is odd, this allows us to reduce the analysis to $\theta \in [0,\frac \pi 4]$.
\end{rem}

In what follows, we consider $ h \geq 0$.

\subsection{Particular cases $k=1,2,3,4,9$}\label{ss:2.3}
We recall from \cite{GH} that $\lambda_{1,h}$, $\lambda_{2,h}$ and $\lambda_{4,h}$
are Courant-sharp for any $h\in [0,+\infty]\,$.
We have also proved the following proposition in \cite{GH}.
\begin{prop}\label{prop:lambda9}
There exists $h_9^* >0$ such that $\lambda_{9,h}$ is Courant-sharp for $0\leq h \leq h_9^*$ and not Courant-sharp for $h>  h_9^*$.
\end{prop}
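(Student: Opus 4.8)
The plan is to follow the single product eigenfunction responsible for the case $k=9$ and to pin down the value of $h$ at which its Courant index jumps. For every finite $h$ the eigenvalue $\lambda_{2,2}(h)=\frac{2}{\pi^2}\alpha_2(h)^2$ carries the eigenfunction
\[
w_h(x,y)=\cos\!\Big(\frac{\alpha_2(h)\,x}{\pi}\Big)\cos\!\Big(\frac{\alpha_2(h)\,y}{\pi}\Big),
\]
and I would first record that $w_h$ \emph{always} has exactly $9$ nodal domains. Indeed $\alpha_2(h)\in[2\pi,3\pi)$ for all finite $h$ (with $\alpha_2(h)\to 3\pi$ only as $h\to+\infty$), so by \eqref{eq:alphaneven} the factor $\cos(\alpha_2(h)x/\pi)$ has exactly the two interior zeros $\pm\pi^2/(2\alpha_2(h))$ in $(-\frac\pi2,\frac\pi2)$, whence the nodal set of $w_h$ is a $3\times3$ grid. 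At $h=0$ this is the Neumann function $\cos 2x\cos 2y$, and $\lambda_{2,2}(0)=8$ sits at index $9$ (the eight lower pairs being $(0,0),(1,0),(0,1),(1,1),(2,0),(0,2),(2,1),(1,2)$), so $\lambda_{9,0}$ is Courant-sharp, recovering the Neumann result of \cite{HPS1}.

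Everything then reduces to locating the first $h$ at which $\lambda_{2,2}(h)$ stops being the ninth eigenvalue. The eight pairs listed above stay strictly below $\lambda_{2,2}(h)=2\alpha_2^2/\pi^2$ for all $h$ (for instance $\alpha_2^2+\alpha_1^2<2\alpha_2^2$ since $\alpha_1<\alpha_2$, and similarly for the others), so the index can only grow, and it does so exactly when the pair $(3,0),(0,3)$ overtakes it from above. Comparing the Neumann values $\alpha_3^2+\alpha_0^2=9\pi^2>8\pi^2=2\alpha_2^2$ at $h=0$ with the Dirichlet values $17\pi^2<18\pi^2$ at $h=+\infty$ (using $\alpha_n(+\infty)=(n+1)\pi$) shows that
\[
g(h):=\alpha_3(h)^2+\alpha_0(h)^2-2\,\alpha_2(h)^2
\]
changes sign. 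I would define $h_9^*$ by $g(h_9^*)=0$ and prove this zero is unique. Differentiating \eqref{eq:alphaneven} and \eqref{eq:alphanodd} gives $\frac{d}{dh}\alpha_n^2=2\alpha_n\alpha_n'$ in closed form; at $h=0$ one finds $\frac{d}{dh}(\alpha_3^2+\alpha_0^2)=6\pi<8\pi=\frac{d}{dh}(2\alpha_2^2)$, so $g'(0)<0$, and the cleanest route to uniqueness is to show $g'(h)<0$ at \emph{every} $h$ with $g(h)=0$, i.e.\ each zero is a transversal down-crossing (so there can be only one). One must also check that no other pair above $8$ (such as $(3,1),(1,3)$, with $\alpha_3^2+\alpha_1^2$) dips below $\lambda_{2,2}(h)$ before $h_9^*$, so that the count of pairs under $\lambda_{2,2}(h)$ is exactly $8$ on $[0,h_9^*)$ and jumps to $10$ beyond $h_9^*$.

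Granting this, the Courant-sharp side is immediate: for $0\le h\le h_9^*$ one has $\lambda_{9,h}=\lambda_{2,2}(h)$ (at $h=h_9^*$ it is the lowest index of the triple $\lambda_{2,2}=\lambda_{3,0}=\lambda_{0,3}$), and $w_h$ lies in the corresponding eigenspace with exactly $9$ nodal domains, so $\lambda_{9,h}$ is Courant-sharp. For $h>h_9^*$ the ninth eigenvalue is instead the double eigenvalue $\lambda_{3,0}(h)=\lambda_{0,3}(h)$, whose eigenspace is spanned by the family $\Phi_{\theta,3,0}$ of \eqref{defphitheta}; since $3+0$ is odd, Lemma~\ref{lem:thetareduction} and Remark~\ref{remut} reduce matters to $\theta\in[0,\frac\pi4]$. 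Here the factor $u_0=\cos(\alpha_0\,\cdot/\pi)$ is nodeless while $u_3=\sin(\alpha_3\,\cdot/\pi)$ has three interior zeros, so $\theta=0$ gives four vertical strips, and an Euler-characteristic count shows $\theta=\frac\pi4$ gives exactly eight domains. I would prove $\mu(\Phi_{\theta,3,0})\le 8$ for all admissible $\theta$; in particular the number of nodal domains is never $9$, so $\lambda_{9,h}$ is not Courant-sharp.

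The hard part is this last bound, made uniform in $h>h_9^*$. I expect the direct count to be delicate at the intermediate values of $\theta$, where the two crossings present at $\theta=\frac\pi4$ resolve: since opening a crossing merges two regions rather than splitting one, the maximum over $\theta$ should be attained at $\theta=\frac\pi4$ and equal $8$, but this monotone-in-$\theta$ behaviour must be argued rather than read off the two extreme slices. To pass from $h=0$ to all $h>h_9^*$ I would then combine three ingredients: the ``number of nodal domains does not increase under a small perturbation of $h$'' result developed in this paper, which controls $h$ just above $h_9^*$ starting from the Neumann count; Theorem~\ref{thm:hlarge}, which already excludes Courant-sharpness of $\lambda_{9,h}$ for $h$ large; and the monotonicity of $g$, which keeps the ninth eigenvalue tied to the $(3,0),(0,3)$ family throughout the intermediate range. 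Welding these three regimes into one uniform statement is, I expect, the principal obstacle.
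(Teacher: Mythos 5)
Your identification of $h_9^*$ as the unique crossing of $\lambda_{2,2,h}$ with $\lambda_{3,0,h}=\lambda_{0,3,h}$, and your treatment of $0\le h\le h_9^*$, are correct and match the proof that this paper imports from \cite{GH}: the product eigenfunction $w_h$ has a $3\times 3$ nodal grid for every finite $h$, the eight lower pairs stay strictly below $\lambda_{2,2,h}$ for all $h$, and at the triple point $9$ is still the lowest label, so $\lambda_{9,h}=\lambda_{2,2,h}$ is Courant-sharp up to and including the crossing. (For uniqueness of the crossing you do not need your proposed argument that every zero of $g$ is a transversal down-crossing: Proposition~\ref{p:crossing}, quoted in the paper, already gives at most one equality point for the pairs $(2,2)$ and $(0,3)$, and the sign change of $g$ between $h=0$ and $h=+\infty$ gives at least one.)

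The genuine gap is in the regime $h>h_9^*$. There you propose to prove $\mu(\Phi_{\theta,3,0})\le 8$ uniformly in $h$ by a direct nodal count, and you concede that welding together the small-$h$ perturbation results, Theorem~\ref{thm:hlarge}, and the intermediate range is ``the principal obstacle''; indeed none of the tools you name covers intermediate $h$, since the perturbation results of Sections~\ref{s6}--\ref{s7} perturb from $h=0$, not from $h_9^*$, and Theorem~\ref{thm:hlarge} only concerns $h\geq h_1$. But this entire difficulty is spurious: since $3+0$ is odd, every function in the eigenspace of $\lambda_{9,h}=\lambda_{3,0,h}=\lambda_{0,3,h}$ (which has multiplicity exactly $2$ for $h>h_9^*$, again by Proposition~\ref{p:crossing}) satisfies $u(-x,-y)=-u(x,y)$, hence has an \emph{even} number of nodal domains by Remark~\ref{remsym} (equivalently Lemma~\ref{lem:antisymmetric}). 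An even number is never $9$, so $\lambda_{9,h}$ is not Courant-sharp for any $h>h_9^*$, with no uniform bound, no Euler-characteristic count, and no matching of regimes required. This parity argument is the missing idea; with it your outline closes and coincides with the argument of \cite{GH}.
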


Hence, from this point onwards, we are only interested in the remaining eigenvalues, i.e. in the eigenvalues $\lambda_{n,h}(S)$ with $n > 4$  and $n \neq 9$.
Note that, due to the monotonicity of the Robin eigenvalues with respect to $h$, we have  for $n \geq 4\,$,
\begin{equation*}%\label{eq:lam4}
\lambda_{n,h}(S) \geq \lambda_{4,h}(S) \geq \lambda_{4,0}(S) = 2\, .
\end{equation*}

\subsection{Symmetry properties}\label{s8}
 We recall the following symmetry properties of the Robin eigenfunctions from \cite[Section 2.3]{GH}.
As was mentioned in \cite{GH}, the use of such symmetries was fruitful in the Neumann case, \cite{HPS1},
by invoking an argument due to Leydold, \cite{Ley}.

In 2D, we consider the possible symmetries of a general eigenfunction associated with the eigenvalues $\lambda_{n,h}$ which reads,
\begin{equation}\label{eq:2.9}
   u(x,y) = \sum_{i,j : \lambda_{n,h}(S) = \pi^{-2}(\alpha_{i}^2 + \alpha_{j}^2)}
    a_{ij}\,  u_{i}(x)u_{j}(y)\,,
 \end{equation}
 where $u_{i}$ is the $(i+1)$-st eigenfunction of the $h$-Robin problem in $(-\frac \pi 2,\frac \pi 2)$.\\
 By  considering the transformation $(x,y) \mapsto (-x, -y)$, we obtain
 \begin{equation}\label{eq:2.9b}
    u(-x, - y) = \sum_{i,j : \lambda_{n,h}(S) = \pi^{-2}(\alpha_{i}^2 + \alpha_{j}^2)}
    a_{ij}\, (-1)^{i+j}  u_{i}(x)u_{j}(y)\,.
 \end{equation}

 \begin{rem}\label{remsym}
 We note that if $(i+j)$ is odd for any pair $(i,j)$ such that $\lambda_{n,h}(S) = \pi^{-2}(\alpha_{i}^2 + \alpha_{j}^2)$, then we get by \eqref{eq:2.9b}, $u(-x,-y) = - u(x,y)$ and as a consequence $u$ has an even number of nodal domains.
 \end{rem}

\section{Former bounds for the number of Courant-sharp Robin eigenvalues of a square}\label{s3}
In this section, we recall the $h$-independent bounds from \cite{GH} and the corresponding Neumann bounds from \cite{HPS1}.

\subsection{Lower bound for the Robin counting function}

Recall that for $\lambda>0$, the Robin counting function for the corresponding
eigenvalues of $\Omega$ is defined as
\begin{equation*}%\label{eq:cfR}
  N_{\Omega}^{R,h}(\lambda) := \#\{k \in \N : \lambda_{k,h}(\Omega) < \lambda\}.
\end{equation*}
The Neumann counting function $ N_{\Omega}^{Ne}(\lambda)$ corresponds to the case $h=0$.
We recall  that the Robin eigenvalues are monotone with respect to $h \in [0,+\infty)$

When $\Omega=S$, we have
\begin{equation}
\label{eq:WeylN}
 \frac{\pi}{4}\lambda + 2\lfloor\sqrt{\lambda}\rfloor +1 \geq  N_{S}^{Ne} (\lambda)>\frac{\pi}{4}\lambda\,,
\end{equation}
and by comparison with the Dirichlet problem, we also have, for $\lambda \geq 2$,
\begin{equation}
  N_{S}^{R,h}(\lambda)
  > \frac{\pi}{4}\lambda - 2\sqrt{\lambda} + 1\,. \label{eq:R1}
\end{equation}

With $\lambda=\lambda_{n,h}>\lambda_{n-1,h}$ and $\Psi$ an associated
eigenfunction, \eqref{eq:R1} becomes
\begin{equation}
\label{eq:R2}
n > \frac{\pi}{4}\lambda_{n,h} - 2\sqrt{\lambda_{n,h}} + 2\,.
\end{equation}

We now work analogously to the proof of Proposition 2.1 in \cite{HPS1}  (see also Section 3 of \cite{GH}).
Denote by $\Omega^{\text{inn}}$ the union of nodal domains of $\Psi$ whose boundaries do not
touch the boundary of $\Omega$ (except at isolated points), and $\mu^{\text{inn}}(\Psi)$ the number of
nodal domains of $\Psi$ in $\Omega^{\text{inn}}$. Similarly denote by $\Omega^{\text{out}}$ the nodal
domains in $\Omega \setminus \Omega^{\text{inn}}$, and $\mu^{\text{out}}(\Psi)$ the number of
nodal domains of $\Psi$ in $\Omega^{\text{out}}$.
We have that $$\mu^{\text{inn}}(\Psi)= \mu(\Psi)- \mu^{\text{out}}(\Psi)$$ and we require an upper bound
for $\mu^{\text{out}}(\Psi)$.\\
In the case, of the square, we have proven  the following lemma in \cite{GH}.
\begin{lem}\label{lemmuout}
Let $\lambda$ be a Robin eigenvalue of $S$ with $h < +\infty$. If $\psi$ is a Robin eigenfunction associated to $\lambda$, then
\begin{equation*}
  \mu^{\text{out}}( \psi) \leq 4 \sqrt{\lambda}\,.
\end{equation*}
\end{lem}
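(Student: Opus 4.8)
The plan is to bound $\mu^{\text{out}}(\psi)$ by counting the sign changes of the trace $\psi|_{\partial S}$ around the boundary, and then to estimate these sign changes edge by edge using the separated structure of the eigenfunctions. First I would carry out the geometric reduction. By definition, each nodal domain counted by $\mu^{\text{out}}$ meets $\partial S$ along a set with nonempty one-dimensional interior, i.e. along at least one nontrivial arc on which $\psi$ keeps a constant sign. Conversely, along a maximal arc of $\partial S$ on which $\psi|_{\partial S}$ has a fixed sign, a one-sided collar neighbourhood inside $S$ on which $\psi$ has that same sign is connected, hence lies in a single nodal domain. This yields a surjection from the set of maximal constant-sign boundary arcs onto the outer nodal domains, so that
\[
\mu^{\text{out}}(\psi)\;\le\; \#\{\text{maximal constant-sign arcs of }\psi|_{\partial S}\}\;\le\;\#\{\text{zeros of }\psi|_{\partial S}\}\,,
\]
the middle quantity being exactly the number of sign changes of the trace around $\partial S$. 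It therefore suffices to bound, on each of the four edges, the number of zeros of the trace by $\sqrt{\lambda}$.

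Next I would exploit separation of variables. Writing $\psi=\sum_{(i,j)}a_{ij}u_i(x)u_j(y)$, with the sum over pairs satisfying $\pi^{-2}(\alpha_i^2+\alpha_j^2)=\lambda$, the restriction to the top edge $\{y=\tfrac\pi2\}$ is $g(x)=\sum_i c_i u_i(x)$ with $c_i=\sum_j a_{ij}u_j(\tfrac\pi2)$, and similarly on the other three edges. Every index $i$ occurring here satisfies $\alpha_i^2\le \alpha_i^2+\alpha_j^2=\pi^2\lambda$, and since $\alpha_i\ge i\pi$ this forces $i\le\sqrt{\lambda}$. Hence $g$ is a nontrivial linear combination of a subfamily of the first $\lfloor\sqrt{\lambda}\rfloor+1$ one-dimensional Robin eigenfunctions $u_0,\dots,u_{\lfloor\sqrt{\lambda}\rfloor}$, where I recall that $u_i$ has exactly $i$ zeros in $(-\tfrac\pi2,\tfrac\pi2)$.

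The analytic heart of the matter is the oscillation bound for Sturm--Liouville eigenfunctions: a nontrivial linear combination $\sum_{i=0}^{N}c_i u_i$ of the first $N+1$ eigenfunctions of the one-dimensional Robin problem has at most $N$ zeros in the open interval. Applying this with $N=\lfloor\sqrt{\lambda}\rfloor$ shows that the trace on each edge has at most $\lfloor\sqrt{\lambda}\rfloor\le\sqrt{\lambda}$ interior zeros. Summing over the four edges (the four corners can only decrease the count) gives at most $4\sqrt{\lambda}$ sign changes around $\partial S$, and combining with the geometric reduction yields $\mu^{\text{out}}(\psi)\le 4\sqrt{\lambda}$.

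The main obstacle is twofold. The delicate analytic point is the upper oscillation bound ``at most $N$ zeros'': it does not follow from a single ODE satisfied by $g$, but rather requires the fact that the family $(u_i)$ forms a Chebyshev (Markov) system. For $h\ge0$ the explicit trigonometric form of the $u_i$ (cosines and sines with frequencies $\alpha_i/\pi$) makes this tractable, though the sign bookkeeping is unpleasant. The delicate geometric point is making the passage from boundary zeros to outer nodal domains rigorous: one must exclude accumulation of zeros of the trace, treat the corners of $S$ carefully, and justify that each maximal constant-sign boundary arc is adjacent to exactly one nodal domain, which is what underlies the claimed surjection.
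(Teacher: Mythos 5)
Your strategy coincides with the proof the paper relies on: the lemma is quoted from \cite{GH}, and the proof there adapts Lemma 2.2 of \cite{HPS1} exactly along your lines --- bound $\mu^{\text{out}}(\psi)$ by the zeros of the trace $\psi|_{\partial S}$, expand the trace on each edge in the one-dimensional Robin eigenfunctions $u_i$ with $i\le\sqrt{\lambda}$, and control the zeros by the Sturm-type oscillation bound. Note also that the ``analytic heart'' you propose to prove by hand is available as a citation: the extension of Sturm's theorem to linear combinations of Sturm--Liouville eigenfunctions is precisely what this paper invokes elsewhere via \cite{BH2,St}, so no Chebyshev-system bookkeeping is required of you. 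However, two genuine gaps remain.

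First, you never use the hypothesis $h<+\infty$, and it is needed exactly where you assert that the trace $g$ on an edge is a \emph{nontrivial} combination. Nothing in your argument excludes $g\equiv 0$, and if that happened both the Sturm bound and your arc-counting (which needs the boundary zeros to be isolated, and needs $\partial\omega\cap\partial S$ to contain points where $\psi\neq0$) collapse. The fix is short but essential: for $h<+\infty$ each one-dimensional Robin eigenfunction satisfies $u_j(\pm\tfrac{\pi}{2})\neq 0$, since $u_j(\tfrac{\pi}{2})=0$ together with the Robin condition gives $u_j'(\tfrac{\pi}{2})=0$ and hence $u_j\equiv 0$ by ODE uniqueness; since for each $i$ there is at most one $j$ with $\alpha_i^2+\alpha_j^2=\pi^2\lambda$, the coefficients of $g$ are $a_{ij}u_j(\tfrac{\pi}{2})$, so $g\equiv 0$ would force $\psi\equiv 0$. (Equivalently: a vanishing trace plus the Robin condition gives vanishing Cauchy data on the edge, and Holmgren yields $\psi\equiv0$.) This is precisely the step that fails in the Dirichlet case $h=+\infty$, which is why the hypothesis appears in the statement; its absence from your proof is the sign of the gap.

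Second, your parenthetical ``the four corners can only decrease the count'' is backwards. The number of maximal constant-sign arcs of the trace on the closed curve $\partial S$ equals the total number of boundary zeros, and a corner at which the trace vanishes is an \emph{extra} zero not included in your per-edge count of interior zeros. As written, your argument only yields $\mu^{\text{out}}(\psi)\le 4\lfloor\sqrt{\lambda}\rfloor+\#\{\text{corner zeros}\}$, which can exceed $4\sqrt{\lambda}$ (for instance when $\lambda$ is a perfect square, such as the eigenvalue $25$ at $h=0$, or for small $h>0$ where $\sqrt{\lambda}$ barely exceeds an integer). To recover the stated constant you need the oscillation bound on the \emph{closed} edge --- at most $\lfloor\sqrt{\lambda}\rfloor$ zeros including endpoints (for Neumann this is immediate, since a combination of $\cos(it)$, $i\le N$, is a polynomial of degree $\le N$ in $\cos t$; for $h>0$ it must be extracted from the Sturm--Liouville theory). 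Then each corner zero is charged to both adjacent edges, and the total number of boundary zeros is at most $4\lfloor\sqrt{\lambda}\rfloor-\#\{\text{corner zeros}\}\le 4\sqrt{\lambda}$, closing the argument.
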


\subsection{Upper bound for Courant-sharp Robin eigenvalues of a square}
By Lemma~\ref{lemmuout}, we have
\begin{equation}
\label{eq:R3}
\mu^{\text{inn}}(\Psi)\geq \mu(\Psi)-4\sqrt{\lambda_{n,h}}\,.
\end{equation}

Now, $\Omega^{\text{inn}}=\bigcup_i \omega^{\text{inn}}_{i}$ is a finite
union of nodal domains for $\Psi$.  Assuming that $\Omega^{\text{inn}}$ is not empty,   we get, on each $\omega^{\text{inn}}_{i}$, by Faber-Krahn (see~\cite{Pl}), that
\begin{equation}\label{eq:R3a}
\frac{A(\omega^{\text{inn}}_i)}{\pi\mathbf{j}^2}\geq \frac{1}{\lambda_{n,h}}\,,
\end{equation}
where $A(\omega^{\text{inn}}_i)$ denotes the area of
$\omega^{\text{inn}}_i$ and $\mathbf{j}$ denotes
the first positive zero of the Bessel function $J_0$. Adding, and
invoking~\eqref{eq:R3}, we find
\begin{equation*}
\frac{\pi}{\mathbf{j}^2}=\frac{A(S)}{\pi\mathbf{j}^2}>
\frac{A(\Omega^{\text{inn}})}{\pi \mathbf{j}^2}\geq \frac{\mu^{\text{inn}}(\Psi)}{\lambda_{n,h}}\geq \frac{\mu(\Psi)-4\sqrt{\lambda_{n,h}}}{\lambda_{n,h}}\,,
\end{equation*}
from which we extract
\begin{equation}\label{eq:R4}
\frac{\pi}{\mathbf{j}^2} \geq \frac{\mu(\Psi)-4\sqrt{\lambda_{n,h}}}{\lambda_{n,h}}\,.
\end{equation}
Due to \eqref{eq:R3}, this inequality is still true if $\Omega^{\text{inn}}$ is empty.

So, similarly to  \cite{BH1}  %\cite{Pl}
and \cite[Proposition 2.1]{HPS1}, we obtain the following.
\begin{prop}
\label{lem:R1}
Any Courant-sharp Robin eigenvalue  $\lambda_{n,h}$ of $S$ has $n \leq 519$.
\end{prop}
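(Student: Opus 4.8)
The plan is to run the Pleijel-type argument that the preceding inequalities have been set up for, using the Courant-sharp hypothesis to force $\mu(\Psi)=n$. Suppose $\lambda_{n,h}$ is Courant-sharp with associated eigenfunction $\Psi$, so that $\mu(\Psi)=n$. Substituting $\mu(\Psi)=n$ into \eqref{eq:R4} gives the upper bound
\begin{equation*}
n \leq \frac{\pi}{\mathbf{j}^2}\,\lambda_{n,h} + 4\sqrt{\lambda_{n,h}}\,,
\end{equation*}
while \eqref{eq:R2} supplies the matching lower bound $n > \frac{\pi}{4}\lambda_{n,h} - 2\sqrt{\lambda_{n,h}} + 2$. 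Together these bracket $n$ between two quadratics in $\sqrt{\lambda_{n,h}}$, and eliminating $n$ yields
\begin{equation*}
\Big(\frac{\pi}{4} - \frac{\pi}{\mathbf{j}^2}\Big)\lambda_{n,h} - 6\sqrt{\lambda_{n,h}} + 2 < 0\,.
\end{equation*}
The crucial point is that $\mathbf{j}\approx 2.405$, so $\mathbf{j}^2>4$ and the leading coefficient $\frac{\pi}{4}-\frac{\pi}{\mathbf{j}^2}$ is strictly positive; this is precisely Pleijel's condition. Viewing the left-hand side as an upward parabola in $x=\sqrt{\lambda_{n,h}}$, the inequality confines $x$ to a bounded interval, and solving the quadratic produces an explicit constant $\Lambda$ (numerically $\Lambda\approx 597$) with $\lambda_{n,h}<\Lambda$.

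Next I would convert this eigenvalue bound into the desired index bound. Since the Robin spectrum is monotone in $h$ and $\lambda_{k,h}\geq \lambda_k^{Ne}$ for all $k$, we have $N_S^{R,h}(\lambda)\leq N_S^{Ne}(\lambda)$ for every $\lambda$. Because $\lambda_{n,h}>\lambda_{n-1,h}$, exactly $n-1$ Robin eigenvalues lie strictly below $\lambda_{n,h}$, so $n-1 = N_S^{R,h}(\lambda_{n,h})\leq N_S^{Ne}(\lambda_{n,h})$. Applying the upper Neumann Weyl bound in \eqref{eq:WeylN} at $\lambda_{n,h}<\Lambda$ gives
\begin{equation*}
n-1 \leq \frac{\pi}{4}\lambda_{n,h} + 2\lfloor\sqrt{\lambda_{n,h}}\rfloor + 1 < \frac{\pi}{4}\Lambda + 2\lfloor\sqrt{\Lambda}\rfloor + 1\,.
\end{equation*}
Evaluating the right-hand side with $\Lambda\approx 597$ (so that $\lfloor\sqrt{\Lambda}\rfloor=24$) gives a value just above $518$; since $n-1$ is an integer this forces $n-1\leq 518$, i.e. $n\leq 519$. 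I would also note that the whole argument is uniform in $h\geq 0$, since every constant entering it — the Bessel zero $\mathbf{j}$, the counting constants in \eqref{eq:WeylN}–\eqref{eq:R2}, and the bound of Lemma~\ref{lemmuout} underlying \eqref{eq:R4} — is $h$-independent.

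I expect the only genuine difficulty to be numerical bookkeeping rather than anything conceptual. The delicate points are: (i) verifying the strict inequality $\mathbf{j}^2>4$, without which the parabola opens the wrong way and no finite bound exists; (ii) carrying the strict inequalities and the floor $\lfloor\sqrt{\lambda_{n,h}}\rfloor$ through the final estimate carefully, since a careless replacement of $\lfloor\sqrt{\lambda_{n,h}}\rfloor$ by $\sqrt{\Lambda}$ overshoots and yields only $n\leq 520$ instead of the sharp $n\leq 519$; and (iii) justifying $n-1=N_S^{R,h}(\lambda_{n,h})$, which relies on the standing assumption $\lambda_{n,h}>\lambda_{n-1,h}$.
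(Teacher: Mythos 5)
Your argument is correct, and it is a genuine (if mild) variant of the paper's. The paper -- which at this point essentially cites \cite{BH1}, \cite[Proposition 2.1]{HPS1} and \cite{GH}, the method being the one recapped in Subsection~\ref{ss:3.4} -- keeps the label $n$ as the working variable: Courant-sharpness gives $N_{S}^{R,h}(\lambda_{n,h})=n-1$, whence \eqref{eq:R2}, i.e.\ an upper bound for $\lambda_{n,h}$ in terms of $n$, which is then substituted into \eqref{eq:R4} to yield an inequality in $n$ alone whose failure for large $n$ gives the stated bound. You eliminate in the opposite order: first $n$ is removed between \eqref{eq:R2} and \eqref{eq:R4}, giving the absolute eigenvalue bound $\lambda_{n,h}<\Lambda\approx 597$ (this is where $\mathbf{j}^2>4$ enters, exactly as you say), and only then do you convert back to the label. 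For this conversion you invoke two ingredients that the paper's route never needs: the upper bound in \eqref{eq:WeylN} and the domination $N_S^{R,h}\leq N_S^{Ne}$ coming from monotonicity in $h$; both are available and $h$-independent, so the step is sound and your uniformity claim is justified. What each approach buys: the paper's substitution is self-contained (only \eqref{eq:R2} and \eqref{eq:R4} are used), while yours isolates an explicit eigenvalue threshold $\Lambda$, which is closer in spirit to Pleijel's original argument. Note, however, that your final step is needlessly lossy: once $\lambda_{n,h}<\Lambda$ is known, feeding it back into \eqref{eq:R4} itself gives $n\leq \frac{\pi}{\mathbf{j}^2}\Lambda+4\sqrt{\Lambda}\approx 422$, which is both shorter and numerically stronger than the detour through the Neumann counting function ($519$ is simply the constant inherited from Theorem~\ref{prop:p1}, i.e.\ from \cite{GH}, and is all that is claimed). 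Three minor points: $\lambda_{n,h}>\lambda_{n-1,h}$ is not a ``standing assumption'' but a consequence of Courant-sharpness, as noted in Subsection~\ref{ss:3.4}; the application of \eqref{eq:R2} requires $\lambda_{n,h}\geq 2$, which holds for $n\geq 4$ by the monotonicity remark in Subsection~\ref{ss:2.3}, the cases $n\leq 3$ being trivial; and your worry (ii) is unfounded, since even the crude replacement $2\lfloor\sqrt{\lambda_{n,h}}\rfloor\leq 2\sqrt{\Lambda}\approx 48.9$ still yields $n-1<519$, hence $n\leq 519$.
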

\subsection{Recap of Helffer-Persson-Sundqvist for Neumann}\label{ss:3.4}
 We recall that in the Neumann case, \cite{HPS1}, the proof goes as follows.
Assume that $(\lambda_n,\Psi_n)$ is a Courant-sharp eigenpair. Courant's Nodal Domain theorem implies that $\lambda_n>\lambda_{n-1}$ and $N(\lambda_n)=n-1$.
Inserting this into \eqref{eq:WeylN} (which is specific to Neumann) gives
\begin{equation*}%\label{eq:countingN0}
\lambda_n<\frac{4}{\pi}(n-1)\,.
\end{equation*}
Combining this with \eqref{eq:R4}, we get
\begin{equation*}%\label{eq:countingN}
n=\mu(\Psi_n) \leq \frac{\bigl|\Omega^{\text{inn}}\bigr|}{\pi {\bf j}^2}\lambda_n + 4\sqrt{\lambda_n}
 < \frac{4}{{\bf j}^2}(n-1)+\frac{8}{\sqrt{\pi}}\sqrt{n-1}\,.
\end{equation*}
A simple calculation shows that this inequality is false if $n\geq 209$.
Hence, in the Neumann case, the analysis of the Courant-sharp situation is reduced to the analysis of the first $208$ eigenvalues.

\section{First reductions}\label{S:hsmall}

\subsection{Analysis via small perturbation}
Here the improvement in comparison with Section \ref{s3}  will result in a better lower bound for the counting function because we are close to the Neumann situation.\\

As $h \rightarrow 0\,$, we  have the following lemma.
 \begin{lem}\label{lem:hsmall}
 There exist $C>0$ and $h_0>0$ such that for $h\in (0,h_0]$ and for each pair $(i,j)$, we have
 \begin{equation}\label{eq:8.1}
 \lambda_{i,j} (0) \leq \lambda_{i,j} (h) \leq  \lambda_{i,j} (0)  + C h\,.
 \end{equation}
 \end{lem}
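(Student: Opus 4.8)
The plan is to prove the lemma by analysing the one-dimensional eigenvalues $\alpha_n(h)$ directly, since the $2D$ eigenvalues decompose as $\lambda_{i,j}(h) = \pi^{-2}(\alpha_i(h)^2 + \alpha_j(h)^2)$. The lower bound $\lambda_{i,j}(0) \le \lambda_{i,j}(h)$ is already recorded in the excerpt (monotonicity of the Robin spectrum in $h$, equivalently $\alpha_n(h) \ge \alpha_n(0)$), so the entire content is the upper bound. Because the $2D$ eigenvalue is a sum of two $1D$ contributions, it suffices to show a uniform one-dimensional estimate: there exist $C_1 > 0$ and $h_0 > 0$ such that for all $h \in (0,h_0]$ and all $n$,
\begin{equation*}
\alpha_n(h)^2 \le \alpha_n(0)^2 + C_1\, h\,.
\end{equation*}
Summing this over the two indices and dividing by $\pi^2$ then yields \eqref{eq:8.1} with $C = 2C_1/\pi^2$. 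The crucial point is that $C_1$ must be \emph{uniform in $n$}, so the main obstacle is controlling the large-$n$ behaviour of the map $h \mapsto \alpha_n(h)$ near $h=0$.

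First I would treat the two parities separately, using the defining equations \eqref{eq:alphaneven} and \eqref{eq:alphanodd} (here $\ell = \pi/2$, the half-side-length). For $n$ even, implicit differentiation of $\alpha \tan(\alpha/2) = h\ell$ at $h=0$, where $\alpha_n(0) = n\pi$ and $\tan(\alpha/2)=0$, gives $\alpha_n'(0)\bigl(\tan(\alpha_n(0)/2) + \tfrac{\alpha_n(0)}{2}\sec^2(\alpha_n(0)/2)\bigr) = \ell$, so $\alpha_n'(0) = \ell/(\tfrac{n\pi}{2}) = 2\ell/(n\pi)$, which is $O(1/n)$. For $n$ odd, rewriting \eqref{eq:alphanodd} as $\alpha \cos(\alpha/2) = -h\ell\sin(\alpha/2)$ and differentiating at $h=0$, where $\alpha_n(0)=n\pi$ and $\cos(\alpha_n(0)/2)=0$, one again finds $\alpha_n'(0) = O(1/n)$. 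Consequently $\tfrac{d}{dh}\alpha_n(h)^2\big|_{h=0} = 2\alpha_n(0)\alpha_n'(0) = O(1)$, bounded uniformly in $n$, which is exactly the cancellation that makes a uniform constant plausible.

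Turning the infinitesimal estimate at $h=0$ into a genuine bound on $(0,h_0]$ is where the real work lies. I would argue that $\alpha_n(h)$ stays trapped in a shrinking neighbourhood of $n\pi$: since $\alpha_n(h) \in [n\pi, (n+1)\pi)$ by definition and the right-hand sides of \eqref{eq:alphaneven}, \eqref{eq:alphanodd} are linear in $h$, a monotonicity/continuity argument shows that for $h \le h_0$ small the solution satisfies $\alpha_n(h) - n\pi \le K/n$ for a constant $K$ independent of $n$ (this is the quantitative version of $\alpha_n'(0) = O(1/n)$, obtained by bounding $\alpha_n''$ or simply by a direct estimate on the defining transcendental equation in the trapping interval). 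Then
\begin{equation*}
\alpha_n(h)^2 - \alpha_n(0)^2 = \bigl(\alpha_n(h) - n\pi\bigr)\bigl(\alpha_n(h) + n\pi\bigr) \le \frac{Kh}{n}\cdot\bigl(2n\pi + \tfrac{Kh}{n}\bigr) \le C_1 h\,,
\end{equation*}
with $C_1$ uniform in $n$. The main obstacle is precisely establishing the uniform-in-$n$ displacement bound $\alpha_n(h) - n\pi \le Kh/n$; the competing effects are that the displacement shrinks like $1/n$ while the factor $\alpha_n(h)+n\pi$ grows like $n$, and one must verify rigorously that these balance rather than leaving a residual growth in $n$. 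Once that estimate is secured, the lemma follows by taking $C$ as above and shrinking $h_0$ if necessary so that the trapping argument is valid for every $n$ simultaneously.
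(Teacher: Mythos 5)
Your treatment of the indices $n \ge 1$ is sound and is essentially the paper's own argument: reduce to the one-dimensional quantities, bound the displacement $\delta_n(h) = \alpha_n(h) - n\pi$ by $O(h/n)$ uniformly in $n$, and observe that the growing factor $\alpha_n(h) + n\pi = O(n)$ exactly cancels the $1/n$, so that $\alpha_n(h)^2 - \alpha_n(0)^2 = O(h)$ uniformly. In fact the trapping estimate you worry about is immediate: writing $\alpha = n\pi + \delta$, both parities reduce to $(n\pi + \delta)\tan(\delta/2) = h\ell$ (for $n$ odd one uses $\tan\bigl(\tfrac{n\pi + \delta}{2}\bigr) = -\cotan(\delta/2)$ in \eqref{eq:alphanodd}), and since $\tan x \ge x$ on $[0,\pi/2)$ this gives $h\ell \ge n\pi\,\delta/2$, i.e. $\delta_n(h) \le 2\ell h/(n\pi)$, with no need to control $\alpha_n''$.

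However, there is a genuine gap: the case $n=0$ is not covered, and your intermediate claims fail there. Your implicit differentiation yields $\alpha_n'(0) = 2\ell/(n\pi)$, which requires dividing by $n$, and the displacement bound $\alpha_n(h) - n\pi \le Kh/n$ is meaningless at $n=0$. Moreover no bound of the form $\alpha_0(h) - \alpha_0(0) \le Kh$ can hold: since $\alpha_0(0)=0$ and $\alpha\tan(\alpha/2) \approx \alpha^2/2$ near $0$, the equation $\alpha_0(h)\tan(\alpha_0(h)/2) = h\ell$ forces $\alpha_0(h) \sim \sqrt{2h\ell}$, so the displacement is of order $\sqrt{h}$ and $h \mapsto \alpha_0(h)$ is not even differentiable at $h=0^+$. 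The conclusion of the lemma still holds for $n=0$, but by a different mechanism: what gets squared is small, so $\alpha_0(h)^2 \le c^2 h = \alpha_0(0)^2 + c^2 h$. This is precisely how the paper proceeds — it isolates $k=0$, derives $\delta_0(h) \le c\sqrt h$ from $\delta_0\tan(\delta_0/2) = h\pi$, and squares — and you must add this separate case: it cannot be dismissed as peripheral, since the index $0$ occurs in many of the relevant two-dimensional pairs (for instance $\lambda_{0,2}$, the fifth eigenvalue, which is the central object of the paper).
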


{\bf Proof.}
 We come back to the computation of $\lambda_{i,j}$. To treat the case  from \eqref{eq:alphaneven},
 we have to analyse the solutions of $\alpha \tan (\frac \alpha 2) = h\pi$.\\
 When $h=0$ the solutions are $\alpha_k = 2 k\pi$ for $k\in \mathbb N$.\\
 If we denote by $\alpha_k(h)$ the solution defined in Section \ref{s4}, we aim to estimate $\delta_k (h) = \alpha_k(h)-\alpha_k$.\\
 First, for $k=0$, we have
 $$
 \delta_0 (h)  \tan \left(\frac {\delta_0(h)}{2}\right) =h \pi\,,
 $$
 which implies the existence of $c>0$ such that for $h$ small,
 $$
 \delta_0 (h) \leq c \sqrt{h}\,.
 $$
 We get immediately
 $$
 \alpha_0 (h)^2 \leq c^2 h\,.
 $$
 We now assume $k>0\,$.\\
 This time we have
 $$
 (2k \pi + \delta_k(h)) \tan  \left(\frac{\delta_k(h)}{2}\right) = h\pi\,.
 $$
 Hence there exists  $h_0 >0$ and $c_0>0$ such that for any $k>0$ and $h\in (0,h_0]$,
 $$
0 < \delta_k (h) \leq c_0 \frac{h}{k}\,.
 $$
 This implies the existence of $h_0 >0$ and $\tilde c >0$ such that for $h\in (0,h_0]$,
 $$
\alpha_k (0) ^2 \leq  \alpha_k (h)^2 \leq \alpha_k (0) ^2  + 2 \alpha_k (0) \delta_k(h) + \delta_k (h)^2 \leq \alpha_k (0) ^2  + \tilde c\, h\,.
$$
The other cases can be treated in a similar way.\\
\qed~\\
 We now have the following lemma.
 \begin{lem} There exists  $h_0 >0$ and $C>0$ such that for any $h\in (0,h_0]$,
  and $\lambda \geq 2$,
$$
 N_{S,h}^R(\lambda) \geq N^R_{S,h=0} (\lambda - C h)= N^{Ne}_S (\lambda- Ch)\,.
 $$
 \end{lem}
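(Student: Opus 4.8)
The plan is to reduce both counting functions to a count over pairs of indices $(i,j)$ and then upgrade the pointwise eigenvalue comparison of Lemma~\ref{lem:hsmall} into a comparison of the two counting functions.

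First I would exploit the separable structure of the square. Since an orthonormal basis of eigenfunctions is given by the products $u_{i,j}(x,y) = u_i(x)u_j(y)$ with eigenvalue $\lambda_{i,j}(h) = \pi^{-2}(\alpha_i(h)^2 + \alpha_j(h)^2)$, the full spectrum counted with multiplicity is exactly the family $\{\lambda_{i,j}(h)\}$ indexed by pairs $(i,j)$ of non-negative integers. Because $N_{S,h}^R$ counts eigenvalues with multiplicity, this lets me rewrite
$$
N_{S,h}^R(\lambda) = \#\{(i,j) : \lambda_{i,j}(h) < \lambda\}\,,
\qquad
N^{Ne}_S(\mu) = \#\{(i,j) : \lambda_{i,j}(0) < \mu\}\,,
$$
the latter being the $h=0$ Robin counting function by definition, so that $N^R_{S,h=0} = N^{Ne}_S$.

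Second, I would fix $C>0$ and $h_0>0$ as in Lemma~\ref{lem:hsmall} and take $h \in (0,h_0]$ and $\lambda \geq 2$. For any pair $(i,j)$ satisfying $\lambda_{i,j}(0) < \lambda - Ch$, the upper bound in \eqref{eq:8.1} gives
$$
\lambda_{i,j}(h) \leq \lambda_{i,j}(0) + Ch < (\lambda - Ch) + Ch = \lambda\,.
$$
Hence every pair contributing to $N^{Ne}_S(\lambda - Ch)$ also contributes to $N_{S,h}^R(\lambda)$, that is,
$$
\{(i,j) : \lambda_{i,j}(0) < \lambda - Ch\} \subseteq \{(i,j) : \lambda_{i,j}(h) < \lambda\}\,.
$$
Taking cardinalities yields $N^{Ne}_S(\lambda - Ch) \leq N_{S,h}^R(\lambda)$, which together with the identity $N^R_{S,h=0}(\lambda - Ch) = N^{Ne}_S(\lambda - Ch)$ is exactly the claimed chain.

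Given Lemma~\ref{lem:hsmall}, the argument is essentially immediate and the only delicate point is bookkeeping: the shift constant $C$ in the eigenvalue comparison must be the \emph{same} for all pairs $(i,j)$, uniformly in the index, since otherwise the inclusion of index sets above would break down. This uniformity is precisely the content of Lemma~\ref{lem:hsmall}, obtained there by applying the per-coordinate bound $\alpha_k(h)^2 \leq \alpha_k(0)^2 + \tilde c\, h$ in each of the two variables; so no genuine obstacle remains and nothing further is needed.
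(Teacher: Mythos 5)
Your proposal is correct and follows essentially the same route as the paper: the paper's proof likewise applies the uniform bound \eqref{eq:8.1} to deduce $\lambda_{i,j,h}(S) \leq i^2+j^2+Ch$ for every pair $(i,j)$, and concludes that each pair with $i^2+j^2 < \lambda - Ch$ satisfies $\lambda_{i,j,h}(S) < \lambda$, which is exactly your inclusion of index sets. Your additional remark about the uniformity of $C$ over all pairs is a fair observation and is indeed the point of Lemma~\ref{lem:hsmall}.
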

Indeed, \eqref{eq:8.1} implies that for each pair $(i, j) \in \N^2$, we have
\begin{equation*}%\label{eq:8.2}
\lambda_{i,j,h}(S) \leq i^2 + j^2 + Ch,
\end{equation*}
for $h\in (0,h_0]$. So, for each $(i,j)$ satisfying $i^2+j^2 < \lambda - Ch$, we have
$\lambda_{i,j,h}(S) < \lambda$.\\
 Finally we have the following lemma.
\begin{lem}\label{lem:linapprx}
Let $N>0$. Then, there exists  $h_0 >0$ and $C>0$ such that for any $0 <n \leq N$, $h\in (0,h_0]$,
\begin{equation*}
\lambda_{n,h} \leq \lambda_{n,0} + C h\,.
\end{equation*}
\end{lem}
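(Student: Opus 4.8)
The plan is to prove Lemma~\ref{lem:linapprx} as an almost-immediate consequence of Lemma~\ref{lem:hsmall} together with the two-sided comparison $\lambda_{n,0} \le \lambda_{n,h}$ (monotonicity) and the lower bound for the counting function established in the intermediate lemma above. The point is that Lemma~\ref{lem:hsmall} gives the estimate $\lambda_{i,j}(h) \le \lambda_{i,j}(0) + Ch$ \emph{pairwise}, indexed by the product structure $(i,j)$, whereas the final statement is phrased in terms of the \emph{ordered} eigenvalues $\lambda_{n,h}$, $\lambda_{n,0}$. So the content of the lemma is really just the passage from the product-indexed estimate to the rank-ordered estimate, uniformly over the finite range $0 < n \le N$.

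First I would fix $N$ and recall that the Neumann eigenvalues $\lambda_{n,0}$ in the range $0 < n \le N$ form a finite list, each of the form $i^2 + j^2$ for some pair $(i,j)$. Since there are only finitely many relevant pairs $(i,j)$ contributing to these first $N$ eigenvalues, one may take a single constant $C$ and a single $h_0 > 0$ that work simultaneously for all of them (by taking the maximum of the $C$'s and the minimum of the $h_0$'s from Lemma~\ref{lem:hsmall} over this finite index set). The lower bound $N^R_{S,h}(\lambda) \ge N^{Ne}_S(\lambda - Ch)$ from the preceding lemma then shows that as $h$ increases from $0$, no eigenvalue can drop below its Neumann position shifted down by $Ch$; combined with monotonicity $\lambda_{n,h} \ge \lambda_{n,0}$, this pins the $n$-th ordered Robin eigenvalue into the interval $[\lambda_{n,0}, \lambda_{n,0} + Ch]$.

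The cleanest way to extract the upper bound is to use the counting-function characterization directly: for a given $n \le N$, I would show that at least $n$ eigenvalues lie strictly below $\lambda_{n,0} + Ch + \epsilon$ for any $\epsilon > 0$. Indeed, each of the $n$ pairs $(i,j)$ realizing the first $n$ Neumann eigenvalues satisfies $\lambda_{i,j,h}(S) \le \lambda_{i,j}(0) + Ch \le \lambda_{n,0} + Ch$ by Lemma~\ref{lem:hsmall} and the ordering, so counting these gives $N^R_{S,h}(\lambda_{n,0} + Ch + \epsilon) \ge n$, whence $\lambda_{n,h} \le \lambda_{n,0} + Ch$ after letting $\epsilon \to 0$ and handling the boundary case of ties carefully. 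The combinatorial bookkeeping of multiplicities and possible equalities among the $i^2 + j^2$ is the only place requiring care.

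The main obstacle, **I expect, will be** the handling of degeneracies in the ordering: when several pairs $(i,j)$ give the same Neumann value $i^2 + j^2$, perturbing $h$ can split this degenerate eigenvalue, and one must verify that the $n$-th \emph{ordered} Robin eigenvalue still does not exceed $\lambda_{n,0} + Ch$ rather than the value of some higher-ranked split-off branch. This is resolved by the finiteness of $N$ (so only finitely many degeneracy patterns occur) and by the uniform constant $C$, which dominates all the per-pair perturbations simultaneously; the rank of an eigenvalue can only be affected by splittings of size $O(h)$, all controlled by the same $Ch$. No genuinely new estimate is needed beyond Lemma~\ref{lem:hsmall}.
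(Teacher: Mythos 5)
Your proposal is correct, and it reaches the conclusion by a slightly different mechanism than the paper. The paper's proof fixes $n$, considers only the top cluster $\mathcal I_n$ of pairs $(i,j)$ with $i^2+j^2=\lambda_{n,0}$, argues \emph{by continuity} of the eigenvalue branches that $\lambda_{n,h}\leq \sup_{(i,j)\in\mathcal I_n}\lambda_{i,j,h}$ for $h$ small, and then applies Lemma~\ref{lem:hsmall} to that cluster. You instead pass from the product-indexed bound to the rank-ordered bound via the counting function: every one of the $n$ ordered pairs realizing the first $n$ Neumann eigenvalues satisfies $\lambda_{i,j,h}\leq \lambda_{i,j,0}+Ch\leq\lambda_{n,0}+Ch$, and since distinct ordered pairs give mutually orthogonal product eigenfunctions these count with multiplicity, so $N^{R,h}_S(\lambda_{n,0}+Ch+\epsilon)\geq n$ and hence $\lambda_{n,h}\leq\lambda_{n,0}+Ch$. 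Both arguments rest entirely on Lemma~\ref{lem:hsmall} plus finiteness of the range $n\leq N$; what your route buys is that it avoids the paper's somewhat informal ``by continuity'' step, and it makes your worried-about ``main obstacle'' (splitting of degenerate clusters) a non-issue, since the counting argument never tracks individual branches --- it only counts how many eigenvalues lie under the threshold. In effect your proof is the observation that the lemma is a direct corollary of the preceding counting-function lemma evaluated at $\lambda=\lambda_{n,0}+Ch+\epsilon$. One small wording slip to fix: the counting bound $N^{R,h}_S(\lambda)\geq N^{Ne}_S(\lambda-Ch)$ prevents eigenvalues from rising \emph{above} their Neumann value plus $Ch$; it is monotonicity in $h$, not this bound, that prevents them from dropping below $\lambda_{n,0}$. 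The displayed derivation in your third paragraph is nevertheless correct.
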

\begin{proof}
We give a proof for fixed $n$. With the notation of \eqref{eq:2.9}, we consider the set $\mathcal I_n$ of the pairs $(i,j)$ such that
$$
i^2 + j^2= \pi^{-2}(\alpha_i (0)^2 + \alpha_j (0)^2) = \lambda_{n,0}\,.
$$
By continuity, for $h$ small enough, we have
$$
\lambda_n (h) \leq \sup_{(i,j) \in \mathcal I_n} \lambda_{i,j,h}\,.
$$
We can then use Lemma \ref{lem:hsmall} to obtain
$$
 \sup_{(i,j) \in \mathcal I_n} \lambda_{i,j,h}\leq \lambda_{n,0} + C h\,.
 $$
\end{proof}

\subsection{Improvement of Theorem \ref{prop:p1} as $h\rightarrow 0$}
If we now come back to the Pleijel-type proof (see Section~\ref{s3}), instead of \eqref{eq:R1} we can use
 \begin{equation*}%\label{eq:R1small}
  N_{S,h}^{R}(\lambda) \geq N_{S}^{Ne}(\lambda -C h)
  > \frac{\pi}{4}(\lambda -C h)\,.
\end{equation*}
So, assuming that $\lambda$ is Courant-sharp and following the same steps as above, we first get  that
\begin{equation*}%\label{eq:R2small}
n >  \frac{\pi}{4}(\lambda -C h)+1\, ,
\end{equation*}
instead of \eqref{eq:R2}. Following what is done in Subsection~\ref{ss:3.4},
we obtain that there exists $\tilde C$ such that
\begin{equation*}
n < \frac{4}{{\bf j}^2}(n-1)+\frac{8}{\sqrt{\pi}}\sqrt{n-1} +\tilde C \, h\,.
\end{equation*}
Hence, for $h$ small enough, we get  the following proposition as for the Neumann case.
\begin{prop}\label{prop5.3}
There exists $h_0 >0$ such that, if $h\in [0,h_0]$ and $\lambda_{n,h}$ is an eigenvalue with $n\geq 209$, then it is not Courant-sharp.
\end{prop}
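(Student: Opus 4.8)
The plan is to mimic the Pleijel-type argument of Subsection~\ref{ss:3.4} exactly, but with the Neumann-improved lower bound for the counting function in place of the Dirichlet-comparison bound~\eqref{eq:R1}. The point is that for $h$ small we are close to the Neumann situation, so we may exploit the sharper Weyl lower bound~\eqref{eq:WeylN} (which is specific to Neumann) through the chain $N_{S,h}^{R}(\lambda)\geq N_S^{Ne}(\lambda-Ch)>\frac{\pi}{4}(\lambda-Ch)$, where $C$ and $h_0$ come from the preceding lemma. Thus the gain over Section~\ref{s3} is a factor of roughly $\frac{\pi}{4}$ rather than the weaker Dirichlet-comparison constant, and this is precisely what brings the threshold down from $519$ to $208$.

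The key steps, in order, are as follows. First I would assume that $\lambda=\lambda_{n,h}$ is Courant-sharp, so that Courant's theorem gives $N_{S,h}^R(\lambda_{n,h})=n-1$ and $\lambda_{n,h}>\lambda_{n-1,h}$. Inserting the improved counting-function bound $n-1=N_{S,h}^R(\lambda_{n,h})>\frac{\pi}{4}(\lambda_{n,h}-Ch)$ yields $n>\frac{\pi}{4}(\lambda_{n,h}-Ch)+1$, the analogue of~\eqref{eq:R2} but without the troublesome $-2\sqrt{\lambda}$ term; equivalently $\lambda_{n,h}<\frac{4}{\pi}(n-1)+Ch$. Second, I would feed this into the Faber--Krahn estimate~\eqref{eq:R4}, which holds verbatim for the Robin problem since its derivation used only Lemma~\ref{lemmuout} and Faber--Krahn on the inner domains. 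Writing $n=\mu(\Psi_n)\leq\frac{\pi}{\mathbf{j}^2}\lambda_{n,h}+4\sqrt{\lambda_{n,h}}$ and substituting the bound on $\lambda_{n,h}$, together with monotonicity of the relevant terms in $\lambda_{n,h}$, produces
\begin{equation*}
n<\frac{4}{\mathbf{j}^2}(n-1)+\frac{8}{\sqrt{\pi}}\sqrt{n-1}+\tilde C\,h\,,
\end{equation*}
for a suitable constant $\tilde C$ absorbing the $Ch$ contributions. Third, I would observe that at $h=0$ this is exactly the inequality from Subsection~\ref{ss:3.4}, which a direct numerical check shows to fail for $n\geq 209$. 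Since the failure at $h=0$ is strict, a continuity/perturbation argument then forces the same failure for all sufficiently small $h$, giving the desired $h_0$.

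The one point requiring care — the main obstacle, though a mild one — is the final continuity step: the inequality for $n=209$ fails at $h=0$ by a fixed positive margin $\epsilon_{209}>0$, and one must choose $h_0$ small enough that $\tilde C\,h_0$ does not overcome this margin \emph{simultaneously} for all $n$ in the relevant range. Because we already know from Proposition~\ref{lem:R1} that Courant-sharpness forces $n\leq 519$, only finitely many values $209\leq n\leq 519$ need to be checked, so taking the minimum over this finite set of the margins and choosing $h_0$ accordingly suffices; there is no issue of a shrinking margin as $n\to\infty$. With that choice of $h_0$, every eigenvalue $\lambda_{n,h}$ with $n\geq 209$ and $h\in[0,h_0]$ violates the necessary inequality and hence cannot be Courant-sharp, which is the assertion of Proposition~\ref{prop5.3}.
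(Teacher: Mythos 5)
Your proposal is correct and follows essentially the same route as the paper's proof: the improved lower bound $N_{S,h}^{R}(\lambda)\geq N_{S}^{Ne}(\lambda-Ch)>\frac{\pi}{4}(\lambda-Ch)$, the Pleijel/Faber--Krahn chain yielding $n<\frac{4}{\mathbf{j}^2}(n-1)+\frac{8}{\sqrt{\pi}}\sqrt{n-1}+\tilde C\,h$, and the numerical fact that this fails for $n\geq 209$ once $h$ is small. Your closing remark on choosing $h_0$ uniformly in $n$ (by reducing to the finite range $209\leq n\leq 519$ via Proposition~\ref{lem:R1}) is a correct and welcome way of making precise the paper's terse ``for $h$ small enough''.
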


In this way we see that as $h\rightarrow 0$, the number of cases to look at is close to the number obtained for the Neumann case (see Proposition 2.1 of \cite{HPS1}).
It remains to follow, as $h\rightarrow 0$, the other arguments used in \cite{HPS1} to reduce the number of cases to analyse. We do this in Subsection~\ref{ss:decay}, Section~\ref{s5} and Section~\ref{spp}.

\subsection{Multiplicity, labelling and application}\label{ss:decay}
Following the steps of \cite{HPS1}, we now try to eliminate more eigenvalues for $n< 209$, taking advantage of $h$ small enough.
We are now analysing a finite $h$-independent number of cases, and for each case, we will show the existence of $h_0>0$ such that the eigenvalue under consideration is not Courant-sharp for $h\in (0,h_0]$. For the final proof of the main results, we should of course take the infimum of all the $h_0$'s appearing in the case-by-case analysis.

From \eqref{eq:R3} and using that $\mu^{\text{out}}(\Psi)$ is an integer, we have that the analogous result to Lemma 2.2 of \cite{HPS1} holds in the Robin case. \\
Summing \eqref{eq:R3a} over all inner nodal domains gives that
\begin{equation*}
\mu^{\text{inn}}(\Psi) \leq \pi \lambda_{n,h}(S)/\mathbf{j}^2\,,
\end{equation*}
so that
\begin{equation*}
\mu(\Psi)\leq \frac{\pi}{\mathbf{j}^2}\lambda_{n,h}(S) + \mu^{\text{out}}(\Psi).
\end{equation*}
In addition,  with \begin{equation*}%\label{defjn}
j_n(h) := \sup \{ q: \alpha_{p}(h)^2 + \alpha_{q}(h)^2 = \pi^2 \lambda_{n,h}(S), p, q \in \N\},
\end{equation*}
the analogous result to Lemma 2.3 of \cite{HPS1} holds in the Courant-sharp situation
\begin{equation}\label{eq:4.10}
n=\mu(\Psi) \leq \frac{\pi}{\mathbf{j}^2}\lambda_{n,h}(S) + \max ( 4 j_n(h),1)\,.
\end{equation}
 We wish to compare the right-hand side of \eqref{eq:4.10} to the Neumann situation by using that $h$ is small.
We first recall the main result regarding crossings which was proven in \cite{GH}.
\begin{prop}\label{p:crossing}
For distinct pairs $(p,q)$ and $(p',q')$, with $p\leq q$ and $p'\leq q'$, there is at most one value of $h$ in $[0,+\infty)$ such that
$\lambda_{p,q,h}(S) = \lambda_{p',q',h}(S)$.\\
Moreover,  if $p < p' \leq q' <q$ and $\lambda_{p,q, h^*}=\lambda_{p',q',h^*}$ for some $h^*\geq 0$,
  then \break $h \mapsto \pi^{-2}( \alpha_{p'}(h)^2 + \alpha_{q'} (h)^2 - \alpha_{p}(h)^2 -\alpha_{q}(h)^2)$ is increasing
  for $h>h^*$. In particular  the curve $h\mapsto \pi^{-2}( \alpha_{p}(h)^2 + \alpha_{q} (h)^2)$ is below the curve \break
  $h\mapsto \pi^{-2}( \alpha_{p'}(h)^2 + \alpha_{q'} (h)^2)$ for $h>h^*$.
\end{prop}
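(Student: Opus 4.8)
The plan is to reduce the whole statement to a single monotonicity property of the map $h\mapsto \alpha_n(h)^2$. Write $\phi_n(h):=\alpha_n(h)^2$ and, in the nested situation, set
\[
D(h):=\alpha_{p'}(h)^2+\alpha_{q'}(h)^2-\alpha_{p}(h)^2-\alpha_{q}(h)^2 .
\]
First I would differentiate the defining relations \eqref{eq:alphaneven}--\eqref{eq:alphanodd} implicitly (equivalently, use the Hadamard formula $\frac{d\lambda}{dh}=\|u\|_{L^2(\partial S)}^2/\|u\|_{L^2}^2$ for the normalized Robin eigenfunction). The key point, which I would check separately for the even and the odd mode and which turns out to give the \emph{same} answer in both cases, is that the slope depends on $n$ only through the eigenvalue $\phi_n$:
\[
\phi_n'(h)=G(\phi_n(h),h),\qquad G(\phi,h)=\frac{4\pi\,\phi}{\phi+2\pi h+\pi^2h^2}.
\]
For $h>0$ the function $\phi\mapsto G(\phi,h)$ is strictly increasing and strictly concave (it is $4\pi$ minus a positive multiple of $(\phi+\beta)^{-1}$, with $\beta=2\pi h+\pi^2h^2>0$). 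These two qualitative facts are all the argument needs.

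Second, I would dispose of the uniqueness claim by showing that a crossing is only possible in the nested configuration. Since $\alpha_n(h)$ is strictly increasing in $n$, so is $\phi_n(h)$, for every $h$. Sorting the two pairs and comparing them componentwise, one sees that unless one pair is strictly nested inside the other, the two sums $\phi_p+\phi_q$ and $\phi_{p'}+\phi_{q'}$ are comparable with a sign independent of $h$; hence no crossing can occur. This leaves exactly the nested case $p<p'\le q'<q$, where $\phi_p<\phi_{p'}\le\phi_{q'}<\phi_q$ for all $h$.

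Third, in the nested case I would prove the single implication: for $h>0$, $D(h)\ge 0 \Rightarrow D'(h)>0$. Writing $a:=\phi_{p'}-\phi_p>0$ and $b:=\phi_q-\phi_{q'}>0$, one has $D=a-b$ and, since $\phi_n'=G(\phi_n,\cdot)$,
\[
D'(h)=\int_0^{a}G'(\phi_p+s,h)\,ds-\int_0^{b}G'(\phi_{q'}+s,h)\,ds ,
\]
where $G'=\partial_\phi G>0$ is strictly decreasing. When $D\ge0$ we have $a\ge b$, so bounding the first integral below by its part over $[0,b]$ and then using $\phi_p<\phi_{q'}$ together with the monotonicity of $G'$ gives $D'(h)>0$. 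From this implication the Proposition follows: at a crossing $h^*>0$ we get $D'(h^*)>0$, and a standard bootstrap (if $D$ returned to $0$ at some $\tilde h>h^*$ it would have to be decreasing there, contradicting $D(\tilde h)=0\Rightarrow D'(\tilde h)>0$) shows $D>0$, hence $D'>0$, for all $h>h^*$; this simultaneously yields the monotonicity and the ``at most one crossing'' conclusions.

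I expect the main obstacle to be twofold. The first is establishing the clean closed form for $\phi_n'(h)$ and, in particular, checking that the even and odd modes produce the \emph{same} function $G$ of $(\phi,h)$: this universality is precisely what legitimizes the concavity/balance comparison across modes of different parity. The second is the degenerate crossing $h^*=0$, where $\beta=0$, $G(\cdot,0)\equiv 4\pi$, and hence $D(0)=D'(0)=0$, so the first-order argument is vacuous. There I would instead compute the second derivative: a short calculation gives $\phi_n''(0)=-8/n^2$, so that $D''(0)=-8\big(\tfrac1{p'^2}+\tfrac1{q'^2}-\tfrac1{p^2}-\tfrac1{q^2}\big)$, and the convexity of $x\mapsto 1/x$ applied to the equal-sum pairs, with the more balanced pair $(p'^2,q'^2)$ lying inside $(p^2,q^2)$, shows $D''(0)>0$. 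This places $D>0$ just to the right of $0$, after which the bootstrap above applies unchanged.
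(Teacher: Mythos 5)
The first thing to note is that this paper does not actually prove Proposition~\ref{p:crossing}: it is recalled verbatim from \cite{GH}, so there is no in-paper proof to compare against, and your proposal has to be judged on its own merits (your route, via an explicit formula for the slope $\frac{d}{dh}\alpha_n(h)^2$, is the natural one and is essentially the mechanism behind the result in \cite{GH}). Your central computation is correct: implicit differentiation of \eqref{eq:alphaneven} and \eqref{eq:alphanodd} (with $\ell=\pi$) gives, for \emph{both} parities, $\frac{d}{dh}\alpha_n^2=\frac{4\pi\alpha_n^2}{\alpha_n^2+2\pi h+\pi^2h^2}$, i.e.\ your $G(\phi,h)$. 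The trichotomy argument (componentwise comparable pairs never cross, so only the nested case $p<p'\le q'<q$ matters), the implication $D(h)\ge 0\Rightarrow D'(h)>0$ for $h>0$ via positivity, monotonicity and strict concavity of $G(\cdot,h)$, and the bootstrap do yield both the uniqueness claim and the ``moreover'' statement. All of these steps check out.

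The genuine gap is your treatment of a crossing at $h^*=0$ in the case $p=0$. There the identities you invoke fail: $\phi_0(0)=0$, the relation $\phi_0'(h)=G(\phi_0(h),h)$ degenerates to $0/0$ at $h=0$, and in fact $\phi_0(h)=2\pi h+\mathcal O(h^2)$ (from $\alpha_0(h)\sim\sqrt{2\pi h}$, as used in Section~\ref{s10}), so $\phi_0'(0)=2\pi$, not $4\pi$. Hence your assertion ``$D(0)=D'(0)=0$'' is false when $p=0$: one gets $D'(0)=4\pi+4\pi-2\pi-4\pi=2\pi>0$. Likewise the formula $\phi_n''(0)=-8/n^2$ and the convexity comparison involving $1/p^2$ are meaningless for $n=p=0$. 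This case is not vacuous: it is exactly the Pythagorean configuration $q^2=p'^2+q'^2$, e.g.\ $(p,q)=(0,5)$ versus $(p',q')=(3,4)$, which is the multiplicity-four cluster $\lambda_{23,0}=\dots=\lambda_{26,0}=25$ to which the paper applies this very proposition (Remark~\ref{rem:decay}). The repair is immediate — for $p=0$ the first-order information $D'(0)=2\pi>0$ already launches your bootstrap, so the situation is easier, not harder — but as written the degenerate-crossing step is incorrect, and your second-derivative argument should be explicitly restricted to $p\ge 1$, with $p=0$ handled separately as above.
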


We then have the following lemma.
\begin{lem}\label{lem:decay}
The multiplicity of $\lambda_{n,h}$ computed for $h=0$ can only decay as $h$ increases for $h$ small enough.
\end{lem}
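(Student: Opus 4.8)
The plan is to show that the multiplicity of $\lambda_{n,h}$ at $h=0$ is an upper bound for the multiplicity at small $h>0$, by combining the crossing result of Proposition~\ref{p:crossing} with the ordering provided by Lemma~\ref{lem:hsmall}. The multiplicity of $\lambda_{n,0}$ is the number of distinct pairs $(i,j)$ (with $i\leq j$, say) realizing the value $\lambda_{n,0}=i^2+j^2$; write $\mathcal{I}_n$ for this set of pairs as in the proof of Lemma~\ref{lem:linapprx}. Each such pair traces out a curve $h\mapsto \pi^{-2}(\alpha_i(h)^2+\alpha_j(h)^2)$, and at $h=0$ all these curves pass through the common value $\lambda_{n,0}$. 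The claim is that for $h>0$ small, these curves have separated, so that no two of them coincide, and moreover no curve coming from a pair \emph{outside} $\mathcal{I}_n$ has moved in to meet them.

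First I would handle the pairs within $\mathcal{I}_n$. Since all the curves indexed by $\mathcal{I}_n$ meet at $h=0$, Proposition~\ref{p:crossing} applies to each distinct ordered pair of them: there is at most one value of $h$ at which any two coincide, and $h=0$ is already such a value. Hence for each distinct pair of curves in $\mathcal{I}_n$, they cannot cross again for any $h>0$; in particular they are distinct for all $h>0$. This already shows that the multiplicity contributed by $\mathcal{I}_n$ at small $h$ cannot exceed its value at $h=0$, and in fact can only split apart. The relevant statement is that crossings of pairs sharing a common value at $h=0$ are forced to be \emph{transversal-at-worst}, so multiplicity can only decay from $\mathcal{I}_n$.

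Next I would rule out that some other curve, coming from a pair $(i',j')\notin\mathcal{I}_n$, approaches the cluster and creates new multiplicity for small $h>0$. Here the two-sided estimate of Lemma~\ref{lem:hsmall} is the key input: since $\lambda_{i',j'}(0)\leq \lambda_{i',j'}(h)\leq \lambda_{i',j'}(0)+Ch$ uniformly, every curve stays within a band of width $O(h)$ of its Neumann value. Any pair not in $\mathcal{I}_n$ has $\lambda_{i',j'}(0)\neq\lambda_{n,0}$, so there is a fixed positive gap $\gamma=\min_{(i',j')\notin\mathcal{I}_n}|\lambda_{i',j'}(0)-\lambda_{n,0}|>0$ (the minimum being over the finitely many relevant pairs once we restrict to a bounded range of indices, which is legitimate because we only track a finite, $h$-independent set of cases). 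For $h$ small enough that $Ch<\gamma/2$, none of these outside curves can reach the cluster, so they contribute no new multiplicity.

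The main obstacle is the bookkeeping in the second step: one must ensure the gap $\gamma$ is bounded below uniformly, which requires restricting to finitely many pairs. This is harmless in context, since by Lemma~\ref{lem:linapprx} and the preceding propositions we are only ever tracking a fixed finite range of eigenvalues, so only finitely many pairs $(i',j')$ can ever come into play near $\lambda_{n,h}$ for $h\in(0,h_0]$. A secondary subtlety is the precise sense of ``decay'' when curves in $\mathcal{I}_n$ split: one should state that the multiplicity as a function of $h$ is upper semicontinuous at $h=0$ and that, by Proposition~\ref{p:crossing}, once two such curves separate for $h>0$ they never recombine, so the multiplicity is genuinely non-increasing on a small interval rather than merely dropping at $h=0$. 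Assembling these two ingredients—no recrossing inside $\mathcal{I}_n$, and no intrusion from outside—gives the stated monotone decay of the multiplicity for $h$ small.
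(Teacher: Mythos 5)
Your proposal is correct and takes essentially the same route as the paper's proof: both isolate the cluster $\mathcal I_n$ from all other eigenvalue curves via the $O(h)$ upper bound (your Lemma~\ref{lem:hsmall} gap argument versus the paper's use of Lemma~\ref{lem:linapprx}, combined with monotonicity), and both invoke Proposition~\ref{p:crossing} to conclude that distinct unordered pairs which coincide at $h=0$ can never coincide again for $h>0$. The only differences are cosmetic: the paper organizes the in-cluster step as a case analysis over the possible multiplicities $1,2,3,4$ (quoting \cite{HPS1}), which your uniform pairwise application of Proposition~\ref{p:crossing} makes unnecessary; note also that the multiplicity is the number of \emph{ordered} pairs, so each unordered pair $(i,j)$ with $i<j$ contributes $2$ for every $h$ --- a bookkeeping point your write-up elides, but one which does not affect the decay conclusion.
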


\begin{proof}
We first show that for $h$ small enough, curves corresponding to $\lambda_{p,q,h}$ with $\lambda_{p,q,0} \neq
\lambda_{n,0}$ do not intersect curves corresponding to $\lambda_{n,h}$.

Consider $\lambda_{k,h}$ where $\lambda_{k,0}<\lambda_{n,0}$ and $k$ is largest possible.
Then by Lemma~\ref{lem:linapprx}, $\lambda_{k,h} \leq \lambda_{k,0} + Ch < \lambda_{n,0}$ for $h$ small enough.

Similarly, consider $\lambda_{\ell,h}$ where $\lambda_{n,0}<\lambda_{\ell,0}$ and $\ell$ is smallest possible.
Then by Lemma~\ref{lem:linapprx}, $\lambda_{n,h} \leq \lambda_{n,0} + Ch < \lambda_{\ell,0}$ for $h$ small enough.

Hence, we need only consider the curves corresponding to $\lambda_{p,q,h}$ that satisfy $\lambda_{p,q,0}=\lambda_{n,0}$.

It was shown in \cite{HPS1} that for $n \leq 208$, the Neumann eigenvalues of $S$ have multiplicity 1, 2, 3 or 4.

If the multiplicity of $\lambda_{p,q,0}$ is 1 or 2, then it remains constant as $h$ increases (for $h$ small enough). The first case corresponds indeed to $p=q$ and the second case to $p\neq q$.

If the multiplicity of $\lambda_{p,q,0}$ is 3 or 4, then $\lambda_{p,q,0}=\lambda_{p',q',0}=\lambda_{n,0}$ with
$p<p' \leq q'<q$. The first case corresponds indeed to $p'=q'$ and the second case to $p'<q'$. By Proposition~\ref{p:crossing}, the curves corresponding to $(p,q)$ and $(p',q')$ do not intersect
for $h>0$, and  the curve corresponding to $(p,q)$ is below that corresponding
to $(p',q')$. In this case, the multiplicity decreases.
\end{proof}

\begin{rem}\label{rem:decay}
We remark that the above proof also shows that if $\lambda_{n,0}$ has multiplicity 4 such that $\lambda_{p,q,0}= \lambda_{q,p,0} =\lambda_{p',q',0} =\lambda_{q',p',0}$ with $p<p'<q'<q$, then for $h$ small enough, $\lambda_{p,q,h}= \lambda_{q,p,h} < \lambda_{p',q',h} =\lambda_{q',p',h}$.
Similarly, if $\lambda_{n,0}$ has multiplicity 3 such that $\lambda_{p,q,0}= \lambda_{q,p,0} =\lambda_{p',p',0}$ with $p<p'<q$, then for $h$ small enough, $\lambda_{p,q,h}= \lambda_{q,p,h} < \lambda_{p',p',h}$.
\end{rem}

As a consequence of Lemma~\ref{lem:decay}, we get  the following lemma.
\begin{lem}
For any $N>0$, there exists $h_0 >0$, such that if $n\leq N$, $h\in [0,h_0]$ then
$$
j_n(h) \leq j_n(0)\,.
$$
\end{lem}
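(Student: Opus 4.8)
The plan is to derive the statement directly from the cluster-separation estimate already used at the start of the proof of Lemma~\ref{lem:decay}, without needing the finer crossing information of Proposition~\ref{p:crossing}. The key fact to isolate is that, for $h$ small, the eigenvalue $\lambda_{n,h}(S)$ is realized \emph{only} by pairs $(p,q)$ that already realize $\lambda_{n,0}$. Granting this, the inclusion
\[
\{(p,q) : \lambda_{p,q,h}(S) = \lambda_{n,h}(S)\} \subseteq \{(p,q) : \lambda_{p,q,0}(S) = \lambda_{n,0}\}
\]
holds, and since $j_n(h)$ is the supremum of $q$ over the left-hand set (equivalently $\max(p,q)$ over unordered pairs, using the symmetry $\lambda_{p,q,h}(S)=\lambda_{q,p,h}(S)$), taking the supremum over the smaller set can only decrease it. This yields $j_n(h) \leq j_n(0)$ at once, the case $h=0$ being trivial.

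To establish the inclusion I would fix $n$ and separate the $\lambda_{n,0}$-cluster from its neighbours exactly as in the proof of Lemma~\ref{lem:decay}. If $(p,q)$ satisfies $\lambda_{p,q,0}(S) < \lambda_{n,0}$, then $\lambda_{p,q,0}(S) \leq \lambda_{k,0}$, where $\lambda_{k,0}$ is the largest Neumann eigenvalue strictly below $\lambda_{n,0}$; Lemma~\ref{lem:hsmall} gives $\lambda_{p,q,h}(S) \leq \lambda_{p,q,0}(S) + Ch \leq \lambda_{k,0} + Ch$, which is $< \lambda_{n,0} \leq \lambda_{n,h}(S)$ once $h$ is small enough, by the monotonicity $\lambda_{n,h}(S) \geq \lambda_{n,0}$. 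Symmetrically, if $\lambda_{p,q,0}(S) > \lambda_{n,0}$, then $\lambda_{p,q,0}(S) \geq \lambda_{\ell,0}$ with $\lambda_{\ell,0}$ the smallest Neumann eigenvalue strictly above $\lambda_{n,0}$, and Lemma~\ref{lem:linapprx} gives $\lambda_{n,h}(S) \leq \lambda_{n,0} + Ch < \lambda_{\ell,0} \leq \lambda_{p,q,0}(S) \leq \lambda_{p,q,h}(S)$ for $h$ small. Hence no pair with $\lambda_{p,q,0}(S) \neq \lambda_{n,0}$ can satisfy $\lambda_{p,q,h}(S) = \lambda_{n,h}(S)$, which is precisely the claimed inclusion.

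To obtain a single threshold $h_0$ valid for every $n \leq N$ I would take the minimum of the finitely many thresholds produced above. The constant $C$ is already uniform in the pair $(i,j)$ by Lemma~\ref{lem:hsmall}, while the spectral gaps $\lambda_{n,0} - \lambda_{k,0}$ and $\lambda_{\ell,0} - \lambda_{n,0}$ are bounded below by a positive constant over the finite range $n \leq N$, so the smallness requirement on $h$ is uniform in $n$.

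The main obstacle here is conceptual rather than computational: the temptation is to invoke Proposition~\ref{p:crossing} in order to identify \emph{which} surviving pairs attain $\lambda_{n,h}(S)$ and thereby track the index, but this is unnecessary. The weak inclusion above already forces $j_n(h) \leq j_n(0)$, and the only genuine care needed is the bookkeeping that $\lambda_{n,h}(S)$ never picks up a contribution from outside the $h=0$ cluster; everything else reduces to the two one-sided estimates furnished by Lemma~\ref{lem:hsmall} and Lemma~\ref{lem:linapprx}.
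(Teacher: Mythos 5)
Your proof is correct, and its core mechanism is the same one the paper relies on: the paper states this lemma as an immediate consequence of Lemma~\ref{lem:decay}, and the first half of the proof of that lemma is precisely your cluster-separation step (isolating the $\lambda_{n,0}$-cluster from its neighbours via Lemma~\ref{lem:linapprx}, Lemma~\ref{lem:hsmall} and monotonicity in $h$). The genuine difference is that the paper's route passes through the full statement of Lemma~\ref{lem:decay}, whose proof also invokes Proposition~\ref{p:crossing} to order the eigenvalue curves inside the cluster when the Neumann multiplicity is $3$ or $4$, whereas you observe that none of this intra-cluster information is needed here: the inclusion $\{(p,q):\lambda_{p,q,h}=\lambda_{n,h}\}\subseteq\{(p,q):\lambda_{p,q,0}=\lambda_{n,0}\}$ already forces $j_n(h)\leq j_n(0)$, since a supremum over a subset cannot exceed the supremum over the larger set. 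Your argument is therefore more economical and logically independent of the crossing analysis; what the paper's bundling buys is the finer conclusion recorded in Remark~\ref{rem:decay} (identifying \emph{which} pairs survive as the lower part of the split cluster), which is genuinely needed later in Sections~\ref{s5} and~\ref{spp}, but not for the present inequality. Your uniformity bookkeeping (finitely many $n\leq N$, the constant $C$ uniform in the pair by Lemma~\ref{lem:hsmall}, positive spectral gaps over a finite range) is also sound.
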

This immediately leads to  the following lemma.
\begin{lem}\label{lemma2.3}
There exists $h_0 >0$ and $C>0$ such that if $n\leq 208$, $h\in [0,h_0]$, and $\lambda_{n,h}$ is Courant-sharp then
\begin{equation}\label{ineqrob}
n \leq \frac{\pi}{\mathbf{j}^2}\lambda_{n,0}(S) + \max ( 4 j_n(0),1) + C h\,.
\end{equation}
\end{lem}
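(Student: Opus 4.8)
The plan is to combine the crossing-free behaviour established in Lemma~\ref{lem:decay} with the linear approximation of Lemma~\ref{lem:linapprx} to transfer the Neumann-type bound \eqref{eq:4.10} into the $h$-perturbed statement \eqref{ineqrob}. I start from the inequality \eqref{eq:4.10}, which holds in the Courant-sharp situation for $h\in(0,h_0]$:
\begin{equation*}
n = \mu(\Psi) \leq \frac{\pi}{\mathbf{j}^2}\lambda_{n,h}(S) + \max(4 j_n(h), 1)\,.
\end{equation*}
There are two $h$-dependent quantities on the right-hand side that I need to control, namely $\lambda_{n,h}(S)$ and $j_n(h)$, and the goal is to replace each by its value at $h=0$ up to an error that is linear in $h$.

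First I would handle the term $j_n(h)$. By the immediately preceding lemma, for $N=208$ there is an $h_0>0$ such that $j_n(h) \leq j_n(0)$ for all $n\leq 208$ and $h\in[0,h_0]$; this is exactly the consequence of Lemma~\ref{lem:decay} that the multiplicity, and with it the largest index $q$ appearing in the decomposition of the eigenspace, can only decay as $h$ increases. Hence $\max(4 j_n(h),1) \leq \max(4 j_n(0),1)$, and this term is bounded by its Neumann value with no error term at all. Next I would handle the spectral term $\lambda_{n,h}(S)$. Applying Lemma~\ref{lem:linapprx} with $N=208$ gives $h_0>0$ and $C'>0$ such that $\lambda_{n,h} \leq \lambda_{n,0} + C' h$ for all $n\leq 208$ and $h\in(0,h_0]$, so that
\begin{equation*}
\frac{\pi}{\mathbf{j}^2}\lambda_{n,h}(S) \leq \frac{\pi}{\mathbf{j}^2}\lambda_{n,0}(S) + \frac{\pi C'}{\mathbf{j}^2}\, h\,.
\end{equation*}
Setting $C := \pi C'/\mathbf{j}^2$ and substituting both bounds into \eqref{eq:4.10} yields \eqref{ineqrob} directly.

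The only genuine subtlety, rather than an obstacle, is the uniformity of the threshold $h_0$ over the finite range $n\leq 208$: each cited lemma produces its own $h_0$, and I must take the minimum of the two (finitely many) thresholds so that a single $h_0$ works simultaneously for the bound on $j_n(h)$ and the bound on $\lambda_{n,h}$. Since we are dealing with a finite, $h$-independent collection of indices, this intersection is harmless, as already noted in Subsection~\ref{ss:decay}. I would state explicitly that $h_0$ is chosen as this minimum and that the constant $C$ is the one produced by Lemma~\ref{lem:linapprx}, which completes the argument.
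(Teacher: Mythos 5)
Your proof is correct and follows exactly the route the paper intends: the paper states that the lemma follows ``immediately'' from \eqref{eq:4.10} together with the preceding two lemmas, namely $j_n(h)\leq j_n(0)$ (a consequence of Lemma~\ref{lem:decay}) and the linear bound $\lambda_{n,h}\leq \lambda_{n,0}+Ch$ of Lemma~\ref{lem:linapprx}, which is precisely your argument. Your explicit remarks on taking the minimum of the finitely many thresholds $h_0$ and on absorbing the factor $\pi/\mathbf{j}^2$ into the constant $C$ only make the paper's implicit step more precise.
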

We can now use the same computations as in  Corollary 2.4 of \cite{HPS1} %(Corollary 2.4)
to eliminate, for $h$ small enough, the same cases as for the Neumann problem.

\begin{prop}
\label{prop5.11}There exists $h_0 >0$ such that, if $h\in [0,h_0]$ and $\lambda_{n,h}$ is an eigenvalue where $n$ is one  of 86, 95--96, 99--100, 103--104,
113, 118--119, 120--121, 128--142, 147--208, then it is not Courant-sharp.
\end{prop}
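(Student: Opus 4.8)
The plan is to reduce the proof to the explicit case-by-case computation already carried out for the Neumann problem in Corollary 2.4 of \cite{HPS1}, combined with the perturbative bound of Lemma~\ref{lemma2.3}. Recall that inequality \eqref{ineqrob} asserts that, for $h$ small enough and $n \leq 208$, a necessary condition for $\lambda_{n,h}$ to be Courant-sharp is
\begin{equation*}
n \leq \frac{\pi}{\mathbf{j}^2}\lambda_{n,0}(S) + \max(4 j_n(0), 1) + C h.
\end{equation*}
Apart from the term $Ch$, the right-hand side is exactly the quantity appearing in the Neumann necessary condition obtained from \eqref{eq:4.10} at $h=0$ (the analogue of Lemma 2.3 of \cite{HPS1}). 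Hence the task is to show that, for each $n$ in the given list, this condition is violated once $h$ is taken sufficiently small.

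First I would record, for each value of $n$ in the list, the Neumann data $\lambda_{n,0}(S)$ and $j_n(0)$, obtained from the tabulation of the Neumann spectrum of $S$ used in \cite{HPS1}. Setting
\begin{equation*}
R_n := \frac{\pi}{\mathbf{j}^2}\lambda_{n,0}(S) + \max(4 j_n(0), 1),
\end{equation*}
the computation underlying Corollary 2.4 of \cite{HPS1} shows precisely that $n > R_n$ for each such $n$, so that these eigenvalues already fail to be Courant-sharp in the Neumann case. Since the list is finite, I would then set
\begin{equation*}
\delta := \min_n (n - R_n) > 0,
\end{equation*}
the minimum being taken over the finitely many values of $n$ in the statement.

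Next I would fix $h_0 > 0$ small enough that both the hypothesis of Lemma~\ref{lemma2.3} is met and $C h_0 < \delta$, where $C$ is the (uniform) constant of that lemma. Then for every $h \in [0, h_0]$ and every $n$ in the list we have $R_n + C h < R_n + \delta \leq n$, so the necessary condition \eqref{ineqrob} fails. By Lemma~\ref{lemma2.3}, $\lambda_{n,h}$ cannot be Courant-sharp, which is exactly the assertion of the proposition.

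The argument is essentially an observation once Lemma~\ref{lemma2.3} is in hand, so there is no deep obstacle; the only point requiring care is uniformity. One must check that the constant $C$ and the threshold $h_0$ coming from Lemma~\ref{lemma2.3} can be chosen to work simultaneously for all $n$ in the list, which is automatic since the list is finite, and, more substantively, that the strict gap $n - R_n$ is genuinely positive for each listed case. The latter is precisely the content of the Neumann computation of \cite{HPS1}, which I would transcribe, verifying in particular that the values $\lambda_{n,0}(S)$ and $j_n(0)$ used there coincide with the Neumann limits of the Robin quantities $\lambda_{n,h}$ and $j_n(h)$ entering \eqref{eq:4.10}.
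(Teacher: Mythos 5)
Your proof is correct and takes essentially the same approach as the paper: both reduce to the numerical computation behind Corollary 2.4 of \cite{HPS1}, which shows $\frac{\pi}{\mathbf{j}^2}\lambda_{n,0} + 4 j_n(0) < n$ for each listed $n$, and then note that the additional $Ch$ term in \eqref{ineqrob} cannot close this gap once $h$ is small. Your explicit uniformity step (taking $\delta = \min_n (n - R_n)$ over the finite list and choosing $h_0$ with $Ch_0 < \delta$) merely spells out what the paper leaves implicit in the phrase ``for $h$ small enough.''
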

\begin{proof}
The numerical
calculation performed in \cite{HPS1}  shows that
\[
\frac{\pi}{{\bf j}^2}\lambda_{n,0}+  4j_n(0) <n\,,
\]
for the $n$ mentioned in the statement, in contradiction with \eqref{ineqrob}  for $h$ small enough.
\end{proof}

\section{On the use of symmetries}\label{s5}
In this section, we further reduce the potential candidates for Courant-sharp Robin eigenvalues of the square when $h$ is small  by using symmetry properties. This leads us to push the argument due to Leydold further as developed in \cite{HPS1}.

\subsection{Antisymmetric eigenvalues}\label{ss5.1}
 Similarly to \cite{HPS1}, let $L^{\text{ARot}}$ denote the Robin Laplacian restricted to the
antisymmetric space
\[
\mathcal H^{\text{ARot}} = \{\psi~|~\psi(-x,-y)=-\psi(x,y)\}\,.
\]
The spectrum of the Robin Laplacian is given by $\pi^{-2} (\alpha_p(h)^2 + \alpha_q(h)^2)$ with $p+q$ odd.
We denote the sequence of eigenvalues of $L^{\text{ARot}}$ by $(\lambda_{n,h}^{\text{ARot}})_{n=1}^{+\infty}$,
counted with multiplicity. Then each antisymmetric $\lambda_{n,h}$ equals $\lambda_{m,h}^{\text{ARot}}$ for some $m$. The following lemma is an analogue of Courant's Nodal Domain theorem in this subspace, and is proven in \cite{HPS1}
for the Neumann case.
\begin{lem}
\label{lem:antisymmetric}
Assume that $(\lambda_{n,h},\Psi_{n,h})$ is an eigenpair of  the Robin Laplacian with parameter $h$, with $\lambda_{n,h}$ antisymmetric, and let $m$ be such that $\lambda_{n,h}=\lambda_{m,h}^{\text{ARot}}$.
Then $\mu(\Psi_{n,h})$ is even, and
\[
\mu(\Psi_{n,h}) \leq 2 m\,.
\]
\end{lem}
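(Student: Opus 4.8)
The plan is to prove the two assertions separately: the parity of $\mu(\Psi_{n,h})$ by a direct symmetry argument, and the Courant-type bound $\mu(\Psi_{n,h})\le 2m$ by adapting Courant's variational proof to the restricted operator $L^{\text{ARot}}$.

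For the parity, I would use the rotation $R:(x,y)\mapsto(-x,-y)$, which is an isometry of $S$. Since $\Psi_{n,h}$ is antisymmetric, $\Psi_{n,h}\circ R=-\Psi_{n,h}$, so $R$ preserves the open set $\{\Psi_{n,h}\ne 0\}$ and exchanges $\{\Psi_{n,h}>0\}$ with $\{\Psi_{n,h}<0\}$. Being a homeomorphism, $R$ maps each nodal domain onto a nodal domain of the opposite sign, and no nodal domain can be $R$-invariant. Hence the positive and negative nodal domains are in bijection, their numbers are equal, and $\mu(\Psi_{n,h})$ is even; write $\mu(\Psi_{n,h})=2M$ and group the domains into $M$ pairs $\{\omega_i,R\omega_i\}$.

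For the bound I would run Courant's argument inside $\mathcal H^{\text{ARot}}$. For each pair set $\chi_i:=\Psi_{n,h}\,\mathbf 1_{\omega_i\cup R\omega_i}$; each $\chi_i$ is antisymmetric, lies in the form domain of $L^{\text{ARot}}$, has support disjoint from the others, and—integrating by parts on each nodal domain, where the interior nodal lines contribute nothing and $\partial S$ yields the Robin term—satisfies $Q(\chi_i)=\lambda\,\|\chi_i\|^2$ with $\lambda=\lambda_{m,h}^{\text{ARot}}$ and $Q(v)=\int_S|\nabla v|^2+h\int_{\partial S}v^2\,ds$. Suppose, for contradiction, that $M\ge m+1$. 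Letting $\phi_1,\dots,\phi_{m-1}$ be the first $m-1$ eigenfunctions of $L^{\text{ARot}}$, I would seek $w=\sum_{i=1}^{m+1}c_i\chi_i$ satisfying the $m-1$ orthogonality conditions $w\perp\phi_1,\dots,\phi_{m-1}$ together with the extra condition $c_{m+1}=0$; this is $m$ linear constraints on $m+1$ unknowns, so a nonzero solution $w=\sum_{i=1}^{m}c_i\chi_i$ exists. By disjointness of supports $Q(w)=\lambda\|w\|^2$, while the min-max characterization $\lambda_{m,h}^{\text{ARot}}=\min\{Q(v)/\|v\|^2:0\ne v\in\mathcal H^{\text{ARot}},\ v\perp\phi_1,\dots,\phi_{m-1}\}$ forces $w$ to be a minimizer, hence an eigenfunction of $L^{\text{ARot}}$ for $\lambda$.

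The crux, and the step I expect to be the main obstacle, is converting this into a contradiction. Because $c_{m+1}=0$, the function $w$ vanishes identically on the open set $\omega_{m+1}\cup R\omega_{m+1}$. On the other hand, an eigenfunction of $L^{\text{ARot}}$ solves $-\Delta w=\lambda w$ in the interior of $S$ (testing the weak formulation against symmetric functions gives $0=0$ by the parity of $w$, so the equation need only hold against antisymmetric test functions, which is exactly the eigenvalue condition), and by elliptic regularity $w$ is real-analytic in the interior. Unique continuation then forces $w\equiv 0$, contradicting $w\ne 0$. Hence $M\le m$ and $\mu(\Psi_{n,h})=2M\le 2m$. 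The delicate points to get right are the integration-by-parts identity $Q(\chi_i)=\lambda\|\chi_i\|^2$ (so that every admissible combination has Rayleigh quotient exactly $\lambda$) and the use of the extra free dimension to arrange $c_{m+1}=0$, which is precisely what upgrades the naive bound $M\le m+1$ to the sharp $M\le m$.
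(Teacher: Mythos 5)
Your proof is correct and is essentially the argument the paper relies on: the paper does not reprove this lemma but cites \cite{HPS1}, where the Neumann case is established by exactly this Leydold-type reasoning --- the rotation $(x,y)\mapsto(-x,-y)$ pairs up nodal domains of opposite sign (giving evenness), followed by Courant's variational argument run inside $\mathcal H^{\text{ARot}}$ with unique continuation supplying the contradiction. The points you flag as delicate (the identity $Q(\chi_i)=\lambda\|\chi_i\|^2$ via integration by parts with the Robin boundary term, and the extra free coefficient $c_{m+1}=0$ that sharpens $M\le m+1$ to $M\le m$) are handled correctly, as is the observation that testing against symmetric functions is automatic, so the restricted eigenfunction is a genuine eigenfunction and unique continuation applies.
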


\begin{prop}
\label{cor:antisymmetric}
There exists $h_0 >0$ such that for $h\in (0,h_0]$,
the eigenvalues
$\lambda_{7,h}=\lambda_{8,h}$,
 $\lambda_{23,h}=\lambda_{24,h}$,
$\lambda_{25,h}=\lambda_{26,h}$,
$\lambda_{29,h}=\lambda_{30,h}$,
$\lambda_{36,h}=\lambda_{37,h}$,
$\lambda_{40,h}=\lambda_{41,h}$,
$\lambda_{51,h}=\lambda_{52,h}$,
$\lambda_{55,h}=\lambda_{56,h}$,
 $\lambda_{59,h}=\lambda_{60,h}$, $\lambda_{61,h}=\lambda_{62,h}$,
$\lambda_{72,h}=\lambda_{73,h}$,  $\lambda_{76,h}=\lambda_{77,h}$,
$\lambda_{91,h}=\lambda_{92,h}$,
$\lambda_{97,h}=\lambda_{98,h}$,
$\lambda_{99,h}=\lambda_{100,h}$,
$\lambda_{103,h}=\lambda_{104,h}$,
$\lambda_{109,h}=\lambda_{110,h}$, $\lambda_{111,h}=\lambda_{112,h}$,
$\lambda_{120}=\lambda_{121}$,
$\lambda_{124,h}=\lambda_{125,h}$,
$\lambda_{132,h}=\lambda_{133,h}$,
$\lambda_{143,h}=\lambda_{144,h}$, and $\lambda_{145,h}=\lambda_{146,h}$,
  are not Courant-sharp.
\end{prop}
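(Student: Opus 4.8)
The plan is to handle every eigenvalue in the list by a single mechanism, namely Lemma~\ref{lem:antisymmetric} applied to a rotation-antisymmetric eigenfunction, after first transporting the combinatorial data from the Neumann problem at $h=0$ to the Robin problem at small $h>0$. The two pieces of data attached to each listed eigenvalue are its position $n$ in the full spectrum (it occupies a pair $\{n,n+1\}$) and its position $m$ in the spectrum of $L^{\mathrm{ARot}}$. First I would fix $N=208$ and, invoking Lemma~\ref{lem:linapprx} together with the non-crossing statement of Proposition~\ref{p:crossing}, choose $h_0>0$ so small that for $h\in(0,h_0]$ the ordering of all the eigenvalues $\lambda_{i,j,h}$ with $i^2+j^2\le N$ coincides with the ordering at $h=0$. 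This guarantees that each eigenvalue in the list is still a double eigenvalue occupying the same pair of positions $\{n,n+1\}$ as in the Neumann case, that it is rotation-antisymmetric because every representing pair $(p,q)$ has $p+q$ odd, and that its position $m$ in the antisymmetric spectrum is the Neumann one.

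The core of the argument is then parity-based and numerical. By Remark~\ref{remsym}, every eigenfunction $\Psi$ associated with such an eigenvalue has an even number of nodal domains, so the odd one of the two labels $n,\,n+1$ can never be realised. For the remaining, even label, Lemma~\ref{lem:antisymmetric} yields $\mu(\Psi)\le 2m$; checking case by case that $2m$ falls strictly below this even label then shows that $\mu(\Psi)$ can equal neither $n$ nor $n+1$, so the eigenvalue is not Courant-sharp. For $h$ sufficiently small all of these finite inequalities reduce, via the index stability established in the first step, to the corresponding inequalities at $h=0$, which are exactly the ones verified in \cite{HPS1}.

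The step I expect to be delicate is the bookkeeping of $m$ through the splitting of the degenerate Neumann eigenvalues. Several entries come from a Neumann eigenvalue of multiplicity $3$ or $4$ that separates for $h>0$ according to Remark~\ref{rem:decay}; for instance the Neumann value $25$, of multiplicity $4$, splits into the two double eigenvalues $\lambda_{23,h}=\lambda_{24,h}$ and $\lambda_{25,h}=\lambda_{26,h}$. After the splitting the antisymmetric index of the \emph{upper} piece is pushed up, and the naive bound $2m$ can become equal to the surviving even label rather than strictly smaller; the point to be handled with care is therefore to use the index of each piece within its own antisymmetric block (equivalently, the lowest admissible $m$ for that eigenvalue), which by Proposition~\ref{p:crossing} lies strictly below the curve carrying the other piece. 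Combining this with the evenness of $\mu(\Psi)$, and where a borderline equality $2m=n+1$ persists, with the additional reflection symmetries of the square exactly as in the Leydold argument of \cite{HPS1}, turns each such inequality into a strict one.

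Finally, I would take $h_0$ to be the minimum of the finitely many thresholds produced in this case-by-case analysis, which is legitimate since only eigenvalues with $n\le 208$ are involved. This delivers a single $h_0>0$ for which all the listed eigenvalues are simultaneously not Courant-sharp on $(0,h_0]$.
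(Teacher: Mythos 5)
Your overall strategy is the paper's own: stabilise the labelling and the antisymmetric indices for small $h$ (Lemma~\ref{lem:linapprx}, Lemma~\ref{lem:decay}, Proposition~\ref{p:crossing}, Remark~\ref{rem:decay}), then combine the parity argument of Remark~\ref{remsym} with the bound $\mu\le 2m$ of Lemma~\ref{lem:antisymmetric} and the numerical data of \cite{HPS1}. The gap is in your numerical step: the claim that ``$2m$ falls strictly below the even label'' is false for several eigenvalues in the list, and not only for the upper pieces of split eigenvalues. Already for the multiplicity-two case $\lambda_{29,h}=\lambda_{30,h}$ (pairs $(5,2),(2,5)$) one has $m=15$, so $2m=30$ \emph{equals} the even label; similarly $\lambda_{109,h}=\lambda_{110,h}$ has $2m=110$, the split piece $\lambda_{25,h}=\lambda_{26,h}$ (pairs $(4,3),(3,4)$) has $m=13$, $2m=26$, and $\lambda_{111,h}=\lambda_{112,h}$ has $2m=114$, above \emph{both} labels. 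Your proposed rescue --- ``additional reflection symmetries of the square exactly as in the Leydold argument'' --- is not available: the mirror bound of Lemma~\ref{lem:antimirror} applies only to eigenfunctions lying in $\mathcal H^{\text{AMir}}$, which requires every representing pair to have both $p$ and $q$ odd, whereas these pairs have mixed parity (e.g. $(5,2)$, $(4,3)$, $(10,5)$); a general eigenfunction $a\,u_p(x)u_q(y)+b\,u_q(x)u_p(y)$ with $p$ even and $q$ odd has no definite mirror symmetry, so no Courant-type bound in a mirror subspace applies to it. As written, your argument therefore fails to exclude $\mu=n+1$ in these cases.

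The repair --- and it is what the paper actually does --- is to observe that excluding $\mu=n+1$ is never needed. Since $\lambda_{n,h}=\lambda_{n+1,h}$, the eigenvalue with label $n+1$ cannot be Courant-sharp at all: Courant-sharpness forces a strict gap below, as recalled in Subsection~\ref{ss:3.4}. Only the lower label $n$ is at stake. When $n$ is odd ($7$, $23$, $25$, $29$, $51$, $55$, $59$, $61$, $91$, $97$, $99$, $103$, $109$, $111$, $143$, $145$), parity alone (Lemma~\ref{lem4.1}) excludes $\mu=n$, with no numerics required; when $n$ is even ($36$, $40$, $72$, $76$, $120$, $124$, $132$) the strict inequality $2m<n$ does hold (e.g. $2\cdot 17=34<36$ and $2\cdot 37=74<76$) and is inherited from $h=0$, because these eigenvalues are either of multiplicity two or are the \emph{lower} pieces of a splitting, whose minimal antisymmetric index is preserved by Remark~\ref{rem:decay}. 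With this adjustment your argument closes, and it also explains why $\lambda_{78,h}$ and $\lambda_{126,h}$ --- upper pieces with even lower labels, for which $2m=n$ exactly --- are absent from the statement and deferred to the perturbation analysis of Sections~\ref{s7} and~\ref{s9}.
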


This was established for $h=0$ in \cite{HPS1} by verifying case by case that $2m < n$. Note that many cases can be more directly obtained by the following lemma.
\begin{lem}\label{lem4.1} Let $0\leq h < +\infty$. Suppose that $\lambda_{n,h}(S)$ is a Robin eigenvalue with corresponding eigenfunction defined in \eqref{eq:2.9}. Suppose that $n$ is odd and that the conditions of Remark~\ref{remsym} are satisfied. Then $\lambda_{n,h}(S)$ is not Courant-sharp.
\end{lem}
The property goes through when the eigenvalue has multiplicity $2$ for $h=0$, see Lemma~\ref{lem:decay}.
We have to be more careful in the case where the multiplicity is higher.\\
Let us first look at the perturbation of $\lambda_{23,0}=\lambda_{24,0}=\lambda_{25,0}=\lambda_{26,0}$.
By Remark~\ref{rem:decay}, for $h$ small enough, we have $\lambda_{23,h}=\lambda_{24,h}<\lambda_{25,h}=\lambda_{26,h}$ corresponding to the pairs $(5,0), (4,3)$ respectively. So Lemma~\ref{lem4.1} shows that the eigenvalue cannot be Courant-sharp.\\
The same argument works for the perturbation of  $\lambda_{59,0}=\lambda_{60,0}=\lambda_{61,h}=\lambda_{62,0}$,  $\lambda_{109,0}=\lambda_{110,0}=\lambda_{111,0}=\lambda_{112,0}$, $\lambda_{143,0}=\lambda_{144,0}=\lambda_{145,0}=\lambda_{146,0}$.\\
Let us finally look at the perturbation of $\lambda_{76,0}=\lambda_{77,0}=\lambda_{78,0}=\lambda_{79,0}$. As proven in \cite{HPS1}, we have $2\times 37 < 76$ where $\lambda_{76,0} = \lambda_{37,0}^{ARot}$. By Proposition~\ref{p:crossing} 
and Remark~\ref{rem:decay}, for $h$ small enough we have $\lambda_{76,h}=\lambda_{77,h} < \lambda_{78,h}=\lambda_{79,h}$. 
We have $\lambda_{76,h}=  \lambda_{m,h}^{ARot}$ and $\lambda_{78,h}=  \lambda_{m+2,h}^{ARot}$, so  $\lambda_{76,h}$ is not Courant-sharp but we cannot conclude for $\lambda_{78,h}$ because $ 2 (m+2) =  78$.
The same problem occurs for the perturbation of $\lambda_{124,0}=\lambda_{125,0}=\lambda_{126,0}=\lambda_{127,0}$.\\

\subsection{Symmetric eigenvalues}\label{ss5.2}

 Similarly to \cite{HPS1}, let $L^{\text{SRot}}$ denote the Robin Laplacian restricted to the symmetric space
\[
\mathcal H^{\text{SRot}} = \{\psi~|~\psi(-x,-y)=\psi(x,y)\}\,.
\]
The spectrum of this Laplacian is given by $\pi^{-2}(\alpha_p(h)^2 + \alpha_q(h)^2)$ with $p+q$ even.
We denote the sequence of eigenvalues of $L^{\text{SRot}}$ by $(\lambda_{m,h}^{\text{SRot}})_{m=1}^{+\infty}$, counted with multiplicity. Each symmetric $\lambda_{n,h}$ equals $\lambda_{m,h}^{\text{SRot}}$ for some $m$.
The following is an analogue of Courant's Nodal Domain theorem in this subspace, and is proven in \cite{HPS1}
for the Neumann case.
\begin{lem}
\label{lem:symmetric}
Let $(\lambda_{n,h},\Psi_{n,h})$ be an eigenpair of the Robin Laplacian with parameter $h$, with symmetric  $\lambda_{n,h}$, and let
$m$ be such that $\lambda_{n,h}=\lambda_{m,h}^{\text{SRot}}$. Then
\[
\mu(\Psi_{n,h}) \leq 2 m\,.
\]
\end{lem}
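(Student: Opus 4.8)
The plan is to adapt the Leydold-type argument of \cite{HPS1,Ley} to the Robin setting, exploiting that the point reflection $R:(x,y)\mapsto(-x,-y)$ is an isometry of $S$ preserving $\partial S$ and the outward normal direction, hence preserving the Robin boundary condition. Consequently $R$ commutes with the Robin Laplacian, and the decomposition $L^2(S)=\mathcal H^{\text{SRot}}\oplus\mathcal H^{\text{ARot}}$ into the $(\pm1)$-eigenspaces of $R$ reduces the operator; the restriction to $\mathcal H^{\text{SRot}}$ is $L^{\text{SRot}}$, whose eigenvalues are $(\lambda_{m,h}^{\text{SRot}})_{m\ge1}$.

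First I would record the geometric consequence of symmetry: since $\Psi_{n,h}\in\mathcal H^{\text{SRot}}$, its nodal set is $R$-invariant, so $R$ induces a permutation of the nodal domains of $\Psi_{n,h}$. As $R$ is an involution whose only fixed point in $S$ is the origin, the nodal domains split into at most one $R$-invariant domain (a domain containing the origin, which can occur only when $\Psi_{n,h}(0,0)\neq0$) together with pairs $\{\omega,R(\omega)\}$ with $\omega\neq R(\omega)$.

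Next I would pass to the quotient $\Sigma:=S/\langle R\rangle$, an orbifold carrying a single cone point of order $2$ (the image of the origin), and let $\tilde\Psi$ be the function induced on $\Sigma$ by $\Psi_{n,h}$. Then $\tilde\Psi$ is an eigenfunction of the Robin problem on $\Sigma$ for the eigenvalue $\lambda_{n,h}=\lambda_{m,h}^{\text{SRot}}$, and by construction this eigenvalue occurs as the $m$-th one of the quotient problem. Applying Courant's nodal domain theorem on $\Sigma$ (equivalently, in the subspace $\mathcal H^{\text{SRot}}$) bounds the number of nodal domains of $\tilde\Psi$ by $m$. Lifting back through the generically $2$-to-$1$ quotient map, each nodal domain of $\tilde\Psi$ pulls back to an $R$-invariant region on which $\Psi_{n,h}$ has constant sign, and which therefore consists of either one $R$-invariant nodal domain or a single $R$-swapped pair; in either case it accounts for at most two nodal domains of $\Psi_{n,h}$. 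Summing over the (at most $m$) nodal domains downstairs yields $\mu(\Psi_{n,h})\le 2m$.

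I expect the main obstacle to be the rigorous justification of Courant's theorem on the quotient, in particular the analysis near the cone point and along the fold of $\partial S$ where two boundary arcs are identified. This is precisely the point handled in \cite{HPS1,Ley} in the Neumann case; the only datum that changes here is the boundary condition on $\partial S$, and since $R$ preserves the Robin condition, both the variational characterisation of $\lambda_{m,h}^{\text{SRot}}$ and the comparison of nodal counts transfer verbatim. In contrast with the antisymmetric Lemma~\ref{lem:antisymmetric}, the function $\Psi_{n,h}$ need not vanish at the origin, so an $R$-fixed nodal domain may occur and no parity statement for $\mu(\Psi_{n,h})$ is obtained.
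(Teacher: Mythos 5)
Your proposal is correct and takes essentially the same route as the paper, which simply invokes the Leydold-type argument of \cite{HPS1,Ley} for the symmetric subspace (Courant's theorem applied in $\mathcal H^{\text{SRot}}$, i.e.\ on the quotient by the rotation, with each quotient nodal domain lifting to at most two nodal domains upstairs), the only new observation being that the rotation preserves the Robin boundary condition so the argument transfers verbatim. One harmless slip: an $R$-invariant nodal domain need not contain the origin (annular-type invariant domains avoiding the fixed point are possible), so there may be several of them; but your final count only uses that each $R$-orbit of nodal domains has at most two elements, so the bound $\mu(\Psi_{n,h})\leq 2m$ is unaffected.
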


\begin{prop}
\label{cor:symmetric}
There exists $h_0 >0$ such that for $h\in (0,h_0]$,
the eigenvalues
$\lambda_{27,h}=\lambda_{28,h}$,
 $\lambda_{46,h}=\lambda_{47,h}$,
$\lambda_{63,h}=\lambda_{64,h}$,
$\lambda_{82,h}=\lambda_{83,h}$,
$\lambda_{86,h}$,
 $\lambda_{87,h}=\lambda_{88,h}$,
$\lambda_{107,h}=\lambda_{108,h}$,
$\lambda_{113,,h}$,
 $\lambda_{114,h}=\lambda_{115,h}$,  and
$\lambda_{138,h}=\lambda_{139,h}$
are not Courant-sharp.
\end{prop}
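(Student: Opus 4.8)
The plan is to argue exactly as in the antisymmetric case (Proposition~\ref{cor:antisymmetric}), now invoking the symmetric analogue of Courant's theorem, Lemma~\ref{lem:symmetric}, in place of Lemma~\ref{lem:antisymmetric}. For each listed (symmetric) eigenvalue I would write $\lambda_{n,h} = \lambda_{m,h}^{\text{SRot}}$ and observe that, since a corresponding eigenfunction lies in $\mathcal H^{\text{SRot}}$, Courant-sharpness would force $n = \mu(\Psi_{n,h}) \leq 2m$. Note that, unlike in the antisymmetric case, there is no automatic evenness of $\mu(\Psi_{n,h})$, so the bound $\mu(\Psi_{n,h}) \leq 2m$ is all we have. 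Hence it suffices to establish the strict inequality $2m < n$ for $h$ small. At $h=0$ this is precisely the case-by-case numerical verification carried out in \cite{HPS1} for the Neumann problem, so the entire content of the proposition is the transfer of these strict inequalities from $h=0$ to $0 < h \leq h_0$.

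The transfer rests on controlling how the two labelling indices, $n$ (position in the full Robin spectrum) and $m$ (position in the spectrum of $L^{\text{SRot}}$), move as $h$ leaves $0$. By Lemma~\ref{lem:linapprx} each eigenvalue moves by only $O(h)$, and by Lemma~\ref{lem:decay} the only crossings occurring for $h$ small are internal to a cluster $\{(p,q) : \lambda_{p,q,0} = \lambda_{n,0}\}$ that is already degenerate at $h=0$; eigenvalues belonging to distinct $h=0$-clusters stay strictly separated. Consequently the number of Robin eigenvalues lying strictly below a given cluster, and likewise the number of $L^{\text{SRot}}$ eigenvalues lying strictly below it, are both constant for $h \in [0,h_0]$. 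For a symmetric eigenvalue that is simple or double at $h=0$, the multiplicity is frozen (Lemma~\ref{lem:decay}), so both $n$ and $m$ are unchanged and the inequality $2m < n$ persists verbatim.

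For the clusters of multiplicity $3$ or $4$ I would follow the bookkeeping of the paragraph after Proposition~\ref{cor:antisymmetric}: Remark~\ref{rem:decay} and Proposition~\ref{p:crossing} determine exactly how the cluster splits for $h>0$ (the pair $(p,q)$ with $p<p'\le q'<q$ sinking strictly below $(p',q')$), and thereby pin down the refined full-spectrum index $n$ and the subspace index $m$ of each surviving sheet. One then checks that the split assigns to the relevant symmetric sheet indices still satisfying $2m < n$, and concludes non-Courant-sharpness via Lemma~\ref{lem:symmetric} (or, when the symmetry hypotheses happen to apply, directly via Lemma~\ref{lem4.1}).

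The point I expect to be delicate is precisely the borderline situation, analogous to $\lambda_{78,h}$ (where $2(m+2)=78$) flagged after Proposition~\ref{cor:antisymmetric}, in which the splitting degrades the strict inequality $2m < n$ into the equality $2m = n$; there Lemma~\ref{lem:symmetric} no longer rules out Courant-sharpness. The main work is therefore to verify that none of the eigenvalues listed here falls into such a regime, i.e. that after the split the strict inequality is genuinely preserved, while any case for which it is not is deferred to the finer analysis of later sections. The only remaining technical point is to take $h_0$ as the minimum of the finitely many thresholds produced in the individual cases, which is harmless since only finitely many $n \leq 208$ are involved.
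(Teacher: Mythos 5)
Your proposal is correct and follows essentially the same route as the paper: the multiplicity-2 cases transfer verbatim from the Neumann verification in \cite{HPS1} since the labels $n$ and $m$ are frozen for $h$ small, and the multiplicity-3/4 clusters ($\lambda_{46}$--$\lambda_{48}$, $\lambda_{87}$--$\lambda_{90}$, $\lambda_{114}$--$\lambda_{117}$) are handled via Remark~\ref{rem:decay}, checking $2m<n$ for the lower sheets after splitting (the paper's values are $m=22$, $m=43$, $m=56$, giving $44<46$, $86<87$, $112<114$). The borderline situations you flag, where splitting yields $2m\geq n$, are exactly $\lambda_{48,h}$, $\lambda_{89,h}$ and $\lambda_{116,h}$, which are precisely the eigenvalues omitted from the proposition's list and deferred to later arguments, just as in the paper.
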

\begin{proof}
As in the previous proposition,  the cases where the multiplicity is 2 follow from what was done in \cite{HPS1} for the Neumann case. We just detail the situation where for $h=0$, the multiplicity is larger than $2$.
In the case of $\lambda_{46,0}=\lambda_{47,0}=\lambda_{48,0}$, we have $m=24$.

For $h>0$ small enough, we have $\lambda_{46,h} = \lambda_{47,h} < \lambda_{48,h}$ by Remark~\ref{rem:decay} so
we  cannot conclude  for $ \lambda_{48,h}$ as $\lambda_{46,0} =\lambda_{22,0}^{SRot}$. This case, will be treated later.\\
In the case,  $\lambda_{87,0}=\lambda_{88,0}=\lambda_{89,0}=\lambda_{90,0}$, we have to consider the situation when
$\lambda_{87,h}=\lambda_{88,h} < \lambda_{89,h}=\lambda_{90,h}$. We know from \cite{HPS1} that
$\lambda_{87,0}= \lambda_{43,0}^{SRot}$ ($m=43$).  Hence we cannot conclude for $\lambda_{89,h}$ for $h >0$.
Finally, in the case  $\lambda_{114,0}=\lambda_{115,0}=\lambda_{116,0}=\lambda_{117,0}$, we know from \cite{HPS1} that
$\lambda_{114,0}= \lambda_{56,0}^{SRot}$ ($m=56$) and we  cannot conclude for $\lambda_{116,h}$ for $h>0$ (Note that $\lambda_{114,h}=\lambda_{115,h}<\lambda_{116,h}=\lambda_{117,h}$).\\
\end{proof}

In comparison with the case $h=0$, we have ``lost'' the treatment of two  eigenvalues: $\lambda_{89,h}$ and  $\lambda_{116,h}$.\\

\subsection{Other symmetries}\label{ss5.3}
Next, similarly to \cite{HPS1}, let $L^{\text{AMir}}$ denote the Robin Laplacian restricted to the
doubly anti-symmetric space
\[
\mathcal H^{\text{AMir}} = \{\psi~|~\psi(-x,y)=-\psi(x,y),\ \psi(x,-y)=-\psi(x,y)\}\,.
\]
The spectrum of this Laplacian is given by $\pi^{-2}(\alpha_p(h)^2 + \alpha_q(h)^2)$ with $p$ and $q$ odd.
We denote the sequence of eigenvalues of $L^{\text{AMir}}$ by $(\lambda_{m,h}^{\text{AMir}})_{m=1}^{+\infty}$, counted with multiplicity. The following lemma is an analogue of Courant's Nodal Domain theorem in this subspace, and is proven in \cite{HPS1} for the Neumann case.

\begin{lem}
\label{lem:antimirror}
Assume that $(\lambda_{n,h},\Psi_{n,h})$ is an eigenpair of  the Robin Laplacian with parameter $h$, with $\lambda_{n,h}$ symmetric
and $\Psi_{n,h} \in \mathcal H^{\text{AMir}} $.
Then
\[
\mu(\Psi_{n,h}) \leq 4 m\,,
\]
for $m$ such that  $\lambda_{n,h}=\lambda_{m,h}^{\text{AMir}}$.
Moreover, $\mu(\Psi_{n,h})$ is divisible by $4$.
\end{lem}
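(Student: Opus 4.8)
The plan is to run Leydold's reflection argument, exactly as in the Neumann proof of \cite{HPS1}, with the outer boundary condition now Robin rather than Neumann. First I would record the defining feature of $\mathcal H^{\text{AMir}}$: if $\psi$ satisfies $\psi(-x,y)=-\psi(x,y)$ and $\psi(x,-y)=-\psi(x,y)$, then putting $x=0$ forces $\psi(0,y)=-\psi(0,y)$, i.e. $\psi(0,y)=0$, and similarly $\psi(x,0)=0$. Hence the nodal set of $\Psi_{n,h}$ contains the two medians $\{x=0\}$ and $\{y=0\}$, which partition $S$ into the four congruent open quadrants. Write $Q=(0,\frac{\pi}{2})^2$ for one of them.

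Next I would exploit the two reflections $R_x\colon(x,y)\mapsto(-x,y)$ and $R_y\colon(x,y)\mapsto(x,-y)$, under each of which $\Psi_{n,h}$ changes sign. Since $\Psi_{n,h}$ vanishes identically on the medians, the open set $\{\Psi_{n,h}\neq 0\}$ is contained in the disjoint union of the four open quadrants, so every connected component (nodal domain) lies entirely inside a single quadrant. The maps $R_x$, $R_y$, $R_xR_y$ are homeomorphisms of $S$ that send nodal domains to nodal domains and carry $Q$ onto the three other quadrants. Therefore each quadrant contains exactly $\mu(\Psi_{n,h}|_Q)$ nodal domains, all pairwise distinct, and so $\mu(\Psi_{n,h})=4\,\mu(\Psi_{n,h}|_Q)$. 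This already establishes that $\mu(\Psi_{n,h})$ is divisible by $4$.

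Finally I would identify the restricted problem. For $p$ odd, $u_p(x)=\sin(\alpha_p(h)x/\pi)$ vanishes at $x=0$ and satisfies the Robin condition at $x=\frac{\pi}{2}$; hence the AMir eigenfunctions $u_p(x)u_q(y)$ with $p,q$ odd are precisely the eigenfunctions of the mixed problem on $Q$ carrying the Dirichlet condition on the two median edges $\{x=0\}$, $\{y=0\}$ and the Robin condition with parameter $h$ on the two outer edges $\{x=\frac{\pi}{2}\}$, $\{y=\frac{\pi}{2}\}$. Consequently $(\lambda_{m,h}^{\text{AMir}})_m$ is exactly the spectrum of this mixed problem counted with multiplicity, and $\Psi_{n,h}|_Q$ is an eigenfunction associated with its $m$-th eigenvalue. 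Courant's nodal domain theorem, applied to this self-adjoint mixed problem on $Q$, then gives $\mu(\Psi_{n,h}|_Q)\leq m$, and combining with the previous paragraph yields $\mu(\Psi_{n,h})\leq 4m$.

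I expect the only delicate points to be the clean factor-of-four count and the matching of indices. One must be sure that no nodal domain straddles a median (guaranteed by $\Psi_{n,h}\equiv 0$ on the medians) and that the index $m$ in $\lambda_{n,h}=\lambda_{m,h}^{\text{AMir}}$ coincides with the Courant index of $\Psi_{n,h}|_Q$ for the mixed problem (guaranteed by the spectral identification above). Everything else is the same reflection-and-Courant scheme as in \cite{HPS1}; the Robin parameter enters only through the outer boundary condition of the quadrant problem and does not disturb the Dirichlet condition on the medians, which is what makes the argument go through uniformly in $h$.
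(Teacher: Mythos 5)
Your proposal is correct and follows essentially the same route as the paper, which simply invokes the Neumann-case proof of Helffer--Persson Sundqvist: the doubly anti-symmetric eigenfunctions vanish on the two medians, their nodal domains come in orbits of four under the reflections, and the restriction to a quadrant is an eigenfunction of the mixed Dirichlet (medians) / Robin (outer edges) problem whose spectrum is exactly $(\lambda_{m,h}^{\text{AMir}})_m$, so Courant's theorem on the quadrant gives $\mu(\Psi_{n,h}|_Q)\leq m$ and hence $\mu(\Psi_{n,h})=4\mu(\Psi_{n,h}|_Q)\leq 4m$. Your explicit verification that the Robin condition on the outer edges survives the odd reflections, and that Courant holds for the mixed problem, is precisely the adaptation the paper leaves implicit.
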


\begin{rem}
If, for all pairs $(p,q)$ of non-negative integers such that
$$\pi^{-2}(\alpha_p(h) ^2+\alpha_q(h)^2)=\lambda_{n,h}\,,$$ it holds that $p$ and $q$ are odd, then there exists an
$m$ such that $\lambda_{n,h}=\lambda_{m,h}^{\text{AMir}}\,$.
\end{rem}

\begin{prop}
\label{cor:antimirror}
The eigenvalues
$\lambda_{12,h}=\lambda_{13,h}$,
$\lambda_{20,h}$,
$\lambda_{27,h}=\lambda_{28,h}$,
$\lambda_{32,h}=\lambda_{33,h}$,
 $\lambda_{46,h}=\lambda_{47,h } < \lambda_{48,h}$
$\lambda_{53,h}=\lambda_{54,h}$,
$\lambda_{68,h}=\lambda_{69,h}$,
$\lambda_{74,h}=\lambda_{75,h}$,
$\lambda_{80,h}=\lambda_{81,h}$,
$\lambda_{86,h}$,
$\lambda_{95,h}=\lambda_{96,h}$,
$\lambda_{107,h}=\lambda_{108,h}$,
 $\lambda_{114,h}=\lambda_{115,h}$,
$\lambda_{128,h}=\lambda_{129,h}$,  and
$\lambda_{140,h}$
are not Courant-sharp.
\end{prop}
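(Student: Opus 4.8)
The plan is to rerun the argument of Propositions~\ref{cor:antisymmetric} and \ref{cor:symmetric}, now with the doubly antisymmetric bound of Lemma~\ref{lem:antimirror} in place of the two rotation bounds. Fix one index $n$ from the list; since Courant-sharpness would force $\mu(\Psi_{n,h})=n$, it suffices to produce, for $h>0$ small, an obstruction to this equality.

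First I would certify that the eigenfunctions genuinely lie in $\mathcal H^{\text{AMir}}$. Reading the Neumann data from \cite{HPS1}, I list every pair $(p,q)$ with $p^2+q^2=\lambda_{n,0}$ and check that in each the two indices are both odd; for the multiplicity-one entries $\lambda_{20,h}$, $\lambda_{86,h}$, $\lambda_{140,h}$ this is automatic, such an eigenvalue arising from a diagonal pair $(p,p)$ which is forced to have $p$ odd here. By the remark following Lemma~\ref{lem:antimirror}, the whole $h=0$ eigenspace then lies in $\mathcal H^{\text{AMir}}$, each generating product $u_p(x)u_q(y)$ with $p,q$ odd being doubly antisymmetric. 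This persists for small $h>0$: by Section~\ref{s4} the one-dimensional factors $u_{p,h}$ keep the parity of their Neumann counterparts, and by Lemma~\ref{lem:decay} with Remark~\ref{rem:decay} no pair of a different parity type can migrate onto $\lambda_{n,h}$. Hence $\Psi_{n,h}\in\mathcal H^{\text{AMir}}$ and Lemma~\ref{lem:antimirror} gives $\mu(\Psi_{n,h})\le 4m$ with $\mu(\Psi_{n,h})$ divisible by $4$, where $\lambda_{n,h}=\lambda_{m,h}^{\text{AMir}}$.

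Second, for each $n$ I would read off $m$ from the $L^{\text{AMir}}$-spectrum (from the tables of \cite{HPS1}, transported to $h$ small via the non-crossing of Proposition~\ref{p:crossing}) and conclude from one of two obstructions: the strict inequality $4m<n$, which forces $\mu(\Psi_{n,h})\le 4m<n$; or, when $4m\ge n$, the divisibility obstruction that $4\nmid n$ whereas $\mu(\Psi_{n,h})$ is a multiple of $4$, so that $\mu(\Psi_{n,h})\ne n$. For instance $\lambda_{20,0}=18$ sits at position $m=4$ in the doubly antisymmetric spectrum, whence $\mu\le 16<20$; and for the block $\lambda_{12,h}=\lambda_{13,h}$ one has $4m<12$ for $n=12$ while $4\nmid 13$ settles $n=13$. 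Entries that already occurred under the rotation symmetries (such as $\lambda_{27,h}=\lambda_{28,h}$ and $\lambda_{107,h}=\lambda_{108,h}$) are re-treated here, and crucially the leftover $\lambda_{48,h}$ from Proposition~\ref{cor:symmetric} is now reached: for a multiplicity-$3$ or $4$ block I first invoke Remark~\ref{rem:decay} to split the $h=0$ eigenvalue into its constituent curves, determine which constituent carries the label $n$ under consideration (for $\lambda_{48,h}$ this is the diagonal pair sitting strictly above $\lambda_{46,h}=\lambda_{47,h}$), read off its $L^{\text{AMir}}$-index, and only then apply the bound.

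The main obstacle is bookkeeping rather than analysis. One must, case by case and for the correct constituent after the $h$-splitting, simultaneously check that all generating pairs are odd--odd and that the associated $m$ satisfies $4m<n$ or that $4\nmid n$. The genuinely delicate point is the multiplicity-$3$ and $4$ entries, where the $m$ attached to a given label $n$ depends on the order in which the split curves lie for $h>0$; here the monotonicity and non-crossing of Proposition~\ref{p:crossing} and Remark~\ref{rem:decay} are exactly what pin down $m$ unambiguously and justify carrying the $h=0$ verifications of \cite{HPS1} over to the range $0<h\le h_0$.
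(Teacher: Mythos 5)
Your proposal follows essentially the same route as the paper: for the blocks that are simple or of multiplicity two at $h=0$, transport the Neumann verification of \cite{HPS1} (the bound $\mu\leq 4m$ plus divisibility by $4$ from Lemma~\ref{lem:antimirror}) to small $h>0$ via Lemma~\ref{lem:decay}, and for the multiplicity-$3$ and $4$ blocks use Proposition~\ref{p:crossing} and Remark~\ref{rem:decay} to decide which constituent curve carries which label before reading off its $L^{\text{AMir}}$-index. Your treatment of $\lambda_{48,h}$ (the diagonal pair $(5,5)$ lying strictly above $\lambda_{46,h}=\lambda_{47,h}$, giving $\mu\leq 4\cdot 11=44<48$) is exactly the paper's argument, and your sample computations ($m=4$ for $\lambda_{20}$, $m=2$ for $\lambda_{12}$) are correct.

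There is, however, one entry where the parity check you claim to carry out in your first step actually fails, and this is a genuine gap: $\lambda_{80,h}=\lambda_{81,h}$. From the table in Appendix~\ref{sA}, $\lambda_{80,0}=\lambda_{81,0}=89$ is generated by the pairs $(8,5)$ and $(5,8)$, which are \emph{not} odd--odd. Since $u_8$ is even, no nonzero element of this eigenspace lies in $\mathcal H^{\text{AMir}}$ (indeed $8+5$ is odd, so the eigenvalue is antisymmetric), and Lemma~\ref{lem:antimirror}, the bound $4m<n$, and the divisibility-by-$4$ obstruction are all unavailable for it. Nothing cheaper rescues this case either: the eigenvalue $89$ has $L^{\text{ARot}}$ index $m=41$, so Lemma~\ref{lem:antisymmetric} only yields $\mu\leq 82$ with $\mu$ even, which is compatible with $\mu=80$. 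This entry is evidently a labelling misprint in the statement for $\lambda_{82,h}=\lambda_{83,h}$ (eigenvalue $90$, pairs $(9,3),(3,9)$, $L^{\text{AMir}}$ index $m=18$, whence $\mu\leq 72<82$), which your method does cover; the pair $(8,5)$, being mutually prime with odd sum, must instead be handled by the perturbation machinery of Sections~\ref{s6}--\ref{s9} together with the special Neumann analysis of \cite{HPS1}. The paper's own proof, which simply defers the multiplicity-two cases to \cite{HPS1}, glosses over this as well; but your write-up positively asserts that every listed eigenvalue passes the odd--odd test, which is false for $n=80,81$. Had you performed the check you describe, you would have had to flag this entry rather than include it.
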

\begin{proof}
As for the preceding propositions, the only cases where the situation can change in comparison with $h=0$ are
the cases when the multiplicity is higher than $2$.\\
Looking at  $\lambda_{46,0}=\lambda_{47,0}=\lambda_{48,0}$, we have $\lambda_{46,0}=\lambda_{9,0} ^{\text{AMir}}$. For $h >0$, we have by Proposition~\ref{p:crossing} and  Remark~\ref{rem:decay}, 
$\lambda_{46,h}=\lambda_{47,h} < \lambda_{48,h}$. Then $ \lambda_{48,h}= \lambda_{11,h} ^{\text{AMir}}$ and we are done.\\
Looking at $\lambda_{114,0}=\lambda_{115,0}=\lambda_{116,0}=\lambda_{117,0}$, we have
$\lambda_{114,0}=\lambda_{27,0} ^{\text{AMir}}$. For $h >0$, we get $\lambda_{116,h}=\lambda_{29,0} ^{\text{AMir}}$
and we cannot conclude.
 \end{proof}

Hence, in comparison with $h=0$, we do not know at the moment if the eigenvalue $\lambda_{116,h}$ is Courant-sharp  or not for $h>0$.

\subsection{Reflection in the coordinate axes}
In the case where $h$ is sufficiently small and $p$ and $q$ are even, we consider the analogue of Lemma 3.8 from \cite{HPS1}. We recall that $\Phi_{\theta,p,q}$ was introduced in \eqref{defphitheta}.
\begin{lem}
\label{lem:pandqeven}
Assume that $p$ and $q$ are even and that the equation $\Phi_{\theta,p,q}(\frac \pi 2,y)=0$ has at least $k$ solutions for $-\frac \pi 2<y<\frac \pi 2$ ($k\geq 0$) and the equation $\Phi_{\theta, p,q}^\theta(x,\frac \pi 2)=0$ has at least $\ell$ solutions ($\ell \geq 0$)  for $-\frac \pi 2<x<\frac \pi 2$\,.
Then
\[
\mu(\Phi_{\theta,2p,2q})\leq 4\mu(\Phi_{\theta,p,q})-(2(k+\ell)+3)\,.
\]
\end{lem}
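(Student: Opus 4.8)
The plan is to reduce the statement to a reflection argument on the four congruent sub-squares of $S$, exactly as in the Neumann case of \cite[Lemma 3.8]{HPS1}, the only genuinely new point being the control of the $h$-dependence. Write $F := \Phi_{\theta,2p,2q}$ and $Q := (0,\frac \pi 2)^2$. Since $2p$ and $2q$ are even, $u_{2p}$ and $u_{2q}$ are cosines, so $F$ is even in $x$ and even in $y$; thus $F$ is determined by its restriction to $Q$, and the lines $V := \{x=0\}$ and $H := \{y=0\}$ split $S$ into four reflected copies of $Q$. The first step is a \emph{folding correspondence}. At $h=0$ one has $\alpha_{2p}(0) = 2\alpha_p(0)$ and, because $p$ is even, $u_{2p}(x) = \pm\, u_p(2x-\frac \pi 2)$ on $(0,\frac \pi 2)$ (and similarly for $q$); hence under the affine map $(x,y)\mapsto (2x-\frac \pi 2, 2y-\frac \pi 2)$ sending $Q$ onto $S$ one obtains $F|_Q = \pm\,\Phi_{\theta,p,q}$, so that the number of nodal domains of $F$ in $Q$ equals $M := \mu(\Phi_{\theta,p,q})$. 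For $h>0$ small the relation $\alpha_{2p}(h)=2\alpha_p(h)$ fails, so this correspondence is no longer exact; I would instead argue that for $h$ small the nodal partition of $F$ in $Q$ and that of $\Phi_{\theta,p,q}$ in $S$ are each small perturbations of the common $h=0$ picture, and hence remain combinatorially identical, so the equality of the two counts persists. The same remark applies to the edge-zero counts $k$ and $\ell$.

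Next I would carry out the reflection count. Gluing the top and bottom halves across $H$ and using that $F$ is even in $y$, a nodal domain symmetric across $H$ contributes a single piece meeting $H$ from the top half, while an asymmetric one contributes a mirror pair; the reflection principle then gives $\mu(F) = 2\,\mu_{\mathrm{top}}(F) - a_H$, where $\mu_{\mathrm{top}}(F)$ is the number of nodal domains of $F$ in the top half and $a_H$ is the number of them meeting $H$ along a segment. Applying the same reflection across $V$ inside the top half yields $\mu_{\mathrm{top}}(F) = 2M - c$, with $c$ the number of nodal domains of $F$ in $Q$ meeting $V$ along a segment. Combining,
\begin{equation*}
\mu(F) = 4M - 2c - a_H\,,
\end{equation*}
so it remains to prove $2c + a_H \geq 2(k+\ell)+3$.

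Finally I would identify the zero-counts. Under the folding correspondence the segment $V\cap \overline Q$ is sent to the edge $\{x=-\frac \pi 2\}$ of $S$, which by the symmetry of $\Phi_{\theta,p,q}$ in $x$ carries the same number $k$ of zeros as $\{x=\frac \pi 2\}$; likewise $H\cap\overline Q$ carries $\ell$ zeros, and by evenness of $F$ in $x$ the full line $H$ carries $2\ell$ or $2\ell+1$ zeros. These cut $V\cap Q$ into $k+1$ sub-segments and $H$ into at least $2\ell+1$ sub-segments on which $F$ does not vanish. The key geometric claim is that two sub-segments separated by a zero lie in \emph{distinct} nodal domains, whence $c \geq k+1$ and $a_H \geq 2\ell+1$; this gives $2c+a_H \geq 2(k+1)+(2\ell+1) = 2(k+\ell)+3$, as required.

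The main obstacle is precisely this separation claim together with the transfer to $h>0$. The separation requires that the nodal arc issued from each boundary zero be transverse and genuinely separate the two adjacent sub-segments, and that no nodal domain wrap around to reconnect them; this is the technical heart inherited from \cite{HPS1}, handled via the simplicity of the zeros of $\Phi_{\theta,p,q}$ on the edges. The transfer to small $h$ is the genuinely new difficulty: since the folding correspondence is exact only at $h=0$, the equality between the count in $Q$ and $M$, and the stability of $k$ and $\ell$, must be justified by showing that the nodal set of $F$ and those of its one-dimensional factors undergo no topological change for $h$ small — exactly the type of perturbation control developed in Sections~\ref{s6} and \ref{s7}.
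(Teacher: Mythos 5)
Your proposal follows essentially the same route as the paper's proof: exploit the evenness of $\Phi_{\theta,2p,2q}$ across the lines $x=0$ and $y=0$ to fold onto a quarter square, identify the quarter-square count with $\mu(\Phi_{\theta,p,q})$ (exactly at $h=0$, by perturbation for $h$ small), and subtract the domains that are double-counted (one per boundary zero) or quadruple-counted (the middle one), which yields the correction $2(k+\ell)+3$. The two points you flag as the technical heart --- that sub-segments separated by a zero lie in distinct nodal domains, and that the folding correspondence survives for small $h>0$ --- are precisely the points the paper's own (much terser) proof asserts without further justification, so your write-up is, if anything, more explicit than the original.
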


\begin{proof}
The function $\Phi_{\theta, 2p,2q}$ is even in the lines $x=0$ and
$y=0$. For $h$ small enough, we can bound $\mu(\Phi_{\theta, 2p,2q})$ from above by $4\mu(\Phi_{\theta,p,q})$.
We note that for each zero described in the statement (except the biggest one),
we count each nodal domain of $\Phi_{\theta,2p,2q}$  twice.
The nodal domain in the middle is subtracted three times if
$\Phi_{\theta, p,q}(\frac \pi 2,\frac \pi 2)\neq 0$.
\end{proof}
We observe that by Sturm's Theorem (see \cite{BH2,St}) we can take $k=\ell =\min\{p,q\}$  in Lemma~\ref{lem:pandqeven} above. We use this observation below.

\begin{prop}
\label{cor:pandqeven} There exists $h_0 >0$, such that for $h\in[0,h_0)$,
the eigenvalues $\lambda_{38,h}=\lambda_{39,h}$ and $\lambda_{93,h}=\lambda_{94,h}$
are not Courant-sharp.
\end{prop}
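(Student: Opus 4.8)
The plan is to identify which pairs $(i,j)$ realise the eigenvalues $\lambda_{38,h}=\lambda_{39,h}$ and $\lambda_{93,h}=\lambda_{94,h}$, check that in each case the labels are even, and then invoke Lemma~\ref{lem:pandqeven} together with the Sturm bound $k=\ell=\min\{p,q\}$ to bound the number of nodal domains of the generic eigenfunction $\Phi_{\theta,2p,2q}$ strictly below the index $n$. First I would locate, for $h=0$, the pair(s) $(i,j)$ with $i^2+j^2=\lambda_{n,0}$ for $n=38,39$ and $n=93,94$; by the reasoning in Remark~\ref{rem:decay} and Lemma~\ref{lem:decay}, for $h$ small these eigenvalues form a multiplicity-$2$ pair realised by $(p',q')$ and $(q',p')$ with $p',q'$ both even, so that $2p=p'$, $2q=q'$ are themselves even and the hypothesis of Lemma~\ref{lem:pandqeven} applies to the halved pair $(p,q)$.

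The key computation is then purely arithmetic. Writing the realising pair as $(2p,2q)$, I would compute $\mu(\Phi_{\theta,p,q})$ — which, since $\lambda_{p,q,0}$ sits at a small index, can be read off directly (the nodal set of a product-type or low-index linear combination is elementary) — and insert it into the inequality
\begin{equation*}
\mu(\Phi_{\theta,2p,2q})\leq 4\mu(\Phi_{\theta,p,q})-\bigl(2(k+\ell)+3\bigr),\qquad k=\ell=\min\{p,q\}.
\end{equation*}
The goal is to verify that the right-hand side is strictly less than $n$ for the relevant $n\in\{38,39\}$ and $n\in\{93,94\}$. If $\mu(\Phi_{\theta,2p,2q})<n$ for the generic value of $\theta$ (equivalently, for the eigenfunction achieving the Courant count), then the eigenvalue cannot be Courant-sharp, since by Lemma~\ref{lem:decay} the multiplicity is exactly $2$ for $h$ small and the whole eigenspace is generated by the family $\Phi_{\theta,2p,2q}$.

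I expect the main obstacle to be twofold. First, one must be careful that Lemma~\ref{lem:pandqeven} bounds $\mu(\Phi_{\theta,2p,2q})$ only for the specific linear combinations of product eigenfunctions of the halved indices; I would need to confirm that the reflection symmetry argument (evenness in $x=0$ and $y=0$) genuinely applies, which relies on both labels of the realising pair being even so that $u_{2p},u_{2q}$ are cosine-type and hence even, and on the fact that for $h$ small the eigenspace is still spanned by $\Phi_{\theta,2p,2q}$ and its $\theta$-reflections. Second, the Sturm count $k=\ell=\min\{p,q\}$ must be justified as a \emph{lower} bound on the number of boundary zeros, so that subtracting $2(k+\ell)+3$ is legitimate; this is exactly the observation recorded before the proposition, but I would double-check that the relevant restrictions $\Phi_{\theta,p,q}(\tfrac{\pi}{2},y)$ and $\Phi_{\theta,p,q}(x,\tfrac{\pi}{2})$ are not identically zero for the generic $\theta$, so that Sturm's theorem gives the stated number of sign changes. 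Once these two points are settled, the contradiction with Courant-sharpness for $h$ small enough follows immediately by taking $h_0$ smaller than all previously fixed thresholds.
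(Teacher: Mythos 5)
Your skeleton matches the paper's: identify the realising pairs $(2,6)$ for $\lambda_{38,h}=\lambda_{39,h}$ and $(2,10)$ for $\lambda_{93,h}=\lambda_{94,h}$, pass to the halved pairs $(1,3)$ and $(1,5)$, and apply Lemma~\ref{lem:pandqeven} with the Sturm count $k=\ell=\min\{p,q\}=1$. But there is a genuine gap in the step where you bound $\mu(\Phi_{\theta,p,q})$. You propose to ``read it off directly'' because the index is small; this is not a valid method: for a genuine linear combination ($\theta\neq 0,\frac{\pi}{2}$) the nodal set of $\Phi_{\theta,1,3}$ or $\Phi_{\theta,1,5}$ is not of product type, its topology changes with $\theta$, and controlling $\mu$ uniformly in $\theta$ is exactly the hard problem this machinery exists to avoid (note that Lemma~\ref{lem:pandqeven} compares $\Phi_{\theta,2p,2q}$ and $\Phi_{\theta,p,q}$ at the \emph{same} $\theta$, so a $\theta$-uniform bound is what is required). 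The paper's key idea, absent from your proposal, is to use the non-Courant-sharpness already established for the eigenvalues attached to the halved pairs: $(1,3)$ carries the label $12$ and $(1,5)$ the label $27$, and both $\lambda_{12,h}$ and $\lambda_{27,h}$ were proved not Courant-sharp (Proposition~\ref{cor:antimirror}); combined with Courant's theorem this gives the uniform bounds $\mu(\Phi_{\theta,1,3})\leq 11$ and $\mu(\Phi_{\theta,1,5})\leq 26$, which are what get inserted into Lemma~\ref{lem:pandqeven}. For $\lambda_{38}$ this suffices: $4\times 11-7=37<38$ (the paper's displayed value ``$32$'' is an arithmetic slip, harmless since $37<38$).

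Second, even after this repair your plan fails for $\lambda_{93}$: with the halved pair $(1,5)$ one only gets $\mu(\Phi_{\theta,2,10})\leq 4\times 26-7=97$, which is \emph{not} below $93$. The paper needs an additional idea here: since both entries of $(1,5)$ are odd, $\Phi_{\theta,1,5}$ lies in the doubly antisymmetric space $\mathcal H^{\text{AMir}}$, and Lemma~\ref{lem:antimirror} gives the sharper bound $\mu(\Phi_{\theta,1,5})\leq 4m=24$ (since $\lambda_{1,5}=\lambda_{6,h}^{\text{AMir}}$), whence $\mu(\Phi_{\theta,2,10})\leq 4\times 24-7=89<93$. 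Your proposal contains no mechanism for this refinement, and neither of the two obstacles you flag addresses it; as written, your argument could at best dispose of $\lambda_{38,h}=\lambda_{39,h}$, not of $\lambda_{93,h}=\lambda_{94,h}$.
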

\begin{proof}
For $\lambda_{38,0}$ we have $p=2$ and $q=6$ and the labelling is preserved for $h$ small enough.
 Note that $\lambda_{1,3,h=0}$ has the labelling $12$. But we have proven just above  that
$\lambda_{12,h}$ is not Courant-sharp, hence by the above lemma, we get:
 $$
\mu(\Phi_{\theta,2,6} )\leq 4\mu(\Phi_{\theta,1,3} )-(4+3) \leq  4 \times  11 - 7 = 32 \,.
 $$

 For $\lambda_{93,0}$ we have $p=2$ and $q=10$ and the labelling is preserved for $h$ small enough.
 Note that $\lambda_{1,5,0}$ has the labelling $27$. But we have proven just above that
 $\lambda_{27,h}$ is not Courant-sharp, hence by the lemma, we get:
 $$
\mu(\Phi_{\theta, 2,10})\leq 4\mu(\Phi_{\theta,1,5})-(4+3) \leq  4 \times 26 - 7 = 97 \,.
 $$
 This is not enough, but if in addition we use Lemma~\ref{lem:antimirror}, we get $\mu(\Phi_{\theta,1,5})\leq 24$ and
  $$
\mu(\Phi_{\theta,2,10})\leq 4\mu(\Phi_{\theta,1,5})-(4+3) \leq  4 \times 24 - 7 = 89 <  93 \,.
 $$
\end{proof}

In summary, this leaves open the analysis for $h>0$ of the cases $\lambda_{78,h}$, $\lambda_{89,h}$, $\lambda_{116,h}$, and   $\lambda_{126,h}$, which were not Courant-sharp for $h=0$ and for which the previous arguments fail. We treat $\lambda_{116,h}$ $(h>0)$ in Section~\ref{s11}. To deal with the three other cases, in Sections~\ref{s6} and ~\ref{s7}, we develop a similar argument to that of \cite{GH} but for small $ h>0$.

\section{The case $(p,p)$}\label{spp}
 Corresponding to Lemma 4.4 of \cite{HPS1}, we have the following proposition.
\begin{prop}
\label{lem:pp}
 If the eigenspace corresponding to $\lambda_{p,p,0}$ is one-dimensional
then, for $h$ small enough and if $\Psi$ is the eigenfunction associated with $\lambda_{p,p,h}$, we have  $\mu(\Psi)=(p+1)^2$.
\end{prop}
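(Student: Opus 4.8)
The plan is to reduce everything to the one-dimensional factorization of the eigenfunction and then count nodal domains of a product directly, exactly as in the Neumann case (Lemma 4.4 of \cite{HPS1}), using Lemma~\ref{lem:decay} to guarantee that the relevant structure persists for $h>0$ small. There are three steps: show the eigenspace stays one-dimensional, so that the eigenfunction is a product $u_{p,h}(x)u_{p,h}(y)$; count the zeros of the one-dimensional factor; and read off the number of nodal domains of the resulting rectangular grid.

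First I would argue that the eigenspace of $\lambda_{p,p,h}$ remains one-dimensional for $h$ small. Since the eigenspace of $\lambda_{p,p,0}$ is one-dimensional, the value $\lambda_{p,p,0}=2p^2$ admits no representation $i^2+j^2=2p^2$ other than $(i,j)=(p,p)$, so $\lambda_{p,p,0}$ has multiplicity $1$. By Lemma~\ref{lem:decay}, the multiplicity can only decay as $h$ increases (for $h$ small), hence it stays equal to $1$. Therefore, up to a multiplicative constant, the eigenfunction associated with $\lambda_{p,p,h}$ is the product
\[
\Psi(x,y)=u_{p,h}(x)\,u_{p,h}(y)\,,
\]
where $u_{p,h}$ is the $(p+1)$-st eigenfunction of the $h$-Robin problem on $(-\frac\pi2,\frac\pi2)$.

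Next I would count the zeros of the one-dimensional factor. By construction (Section~\ref{s4}), $\alpha_p(h)\in[p\pi,(p+1)\pi)$ for all $h\geq 0$, and $u_{p,h}$ equals $\cos(\alpha_p(h)x/\pi)$ for $p$ even and $\sin(\alpha_p(h)x/\pi)$ for $p$ odd. Solving $u_{p,h}=0$ on $(-\frac\pi2,\frac\pi2)$ and using that $\alpha_p(h)$ stays strictly inside $[p\pi,(p+1)\pi)$, one checks directly that $u_{p,h}$ has exactly $p$ zeros there, all simple; denote them $a_1<\dots<a_p$. For $h=0$ these are the usual zeros of $\cos(px)$ or $\sin(px)$, and the count is unchanged for $h$ small because $\alpha_p(h)$ does not leave the half-open interval.

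Finally, the nodal set of $\Psi$ is the union of the $p$ vertical segments $\{x=a_i\}$ and the $p$ horizontal segments $\{y=a_j\}$, which partition the square $S$ into $(p+1)^2$ open rectangles. On each such rectangle both factors are nonzero, so $\Psi$ has constant sign, and crossing any segment $\Psi$ changes sign because the corresponding zero is simple; thus adjacent rectangles carry opposite signs and each rectangle is exactly one nodal domain. Hence $\mu(\Psi)=(p+1)^2$. The only point that genuinely requires the smallness of $h$ is the persistence of the one-dimensionality and of the zero count under perturbation; both follow from Lemma~\ref{lem:decay} and from the confinement $\alpha_p(h)\in[p\pi,(p+1)\pi)$, so no new difficulty arises in comparison with the Neumann case $h=0$.
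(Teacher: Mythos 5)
Your proposal is correct and follows essentially the same route as the paper: the eigenspace remains spanned by the product $u_{p,h}(x)u_{p,h}(y)$, each one-dimensional factor has exactly $p$ (simple) zeros since $\alpha_p(h)\in[p\pi,(p+1)\pi)$, and the resulting grid yields $(p+1)^2$ nodal domains. You merely make explicit two points the paper leaves implicit, namely the appeal to Lemma~\ref{lem:decay} for persistence of simplicity and the verification of the zero count.
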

\begin{proof}
The eigenspace is spanned by $\cos (\alpha_p(h) x/\pi) \, \cos (\alpha_p(h)y/\pi) $  for $p$ even, and by
$\sin (\alpha_p(h) x/\pi) \,  \sin (\alpha_p(h)y/\pi) $ for $p$ odd. In each case, this is a
product of a function that depends on $x$ and one that depends on $y$.
For $h$ small enough, each of them has $p$ zeros, and thus the number of nodal domains equals \break  $(p+1)^2$.
\end{proof}
Observing that the eigenvalues $\lambda_{20,0}$,
$\lambda_{31,0}$,
$\lambda_{65,0}$,
$\lambda_{86,0}$,
 and $\lambda_{113,0}$
are simple and correspond to $p=3,4,6, 7$ and $8$, we get:
\begin{cor}
\label{cor:pp}
There exists $h_0 >0$, such that, for $0<h\leq h_0$, the eigenvalues\footnote{We have corrected two misprints concerning the labelling in  Corollary 4.5 in \cite{HPS1}}
$\lambda_{20,h}$,
$\lambda_{31,h}$,
$\lambda_{65,h}$,
$\lambda_{86,h}$ and
$\lambda_{113,h}$,
are not Courant-sharp.
\end{cor}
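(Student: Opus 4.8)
The plan is to obtain the corollary as a direct application of Proposition~\ref{lem:pp}, the only remaining work being to check its hypothesis and to track the labelling as $h$ moves away from $0$.

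First I would verify that each of the listed Neumann eigenvalues really is of the form $\lambda_{p,p,0}$ with a one-dimensional eigenspace. At $h=0$ one has $\lambda_{i,j,0}=i^2+j^2$, so the relevant values are $2p^2 = 18, 32, 72, 98, 128$ for $p=3,4,6,7,8$ respectively. Each of these integers admits only the representation $p^2+p^2$ as a sum of two squares, so no pair $(i,j)$ with $i\neq j$ contributes to the eigenspace, which is therefore spanned by the single product eigenfunction. This is exactly the hypothesis of Proposition~\ref{lem:pp}.

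Next I would argue that the labelling is preserved for $h$ small. Since each $\lambda_{p,p,0}$ is simple, there is a spectral gap on either side at $h=0$; combining the monotonicity $\lambda_{n,h}\geq \lambda_{n,0}$ with the upper bound $\lambda_{n,h}\leq\lambda_{n,0}+Ch$ of Lemma~\ref{lem:linapprx} (and the non-crossing statement of Proposition~\ref{p:crossing} together with Lemma~\ref{lem:decay}), for $h$ small enough the $n$-th eigenvalue remains the continuation of $\lambda_{p,p,0}$. Hence $\lambda_{20,h}=\lambda_{3,3,h}$, $\lambda_{31,h}=\lambda_{4,4,h}$, $\lambda_{65,h}=\lambda_{6,6,h}$, $\lambda_{86,h}=\lambda_{7,7,h}$ and $\lambda_{113,h}=\lambda_{8,8,h}$ for $0<h\leq h_0$.

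Finally I would apply Proposition~\ref{lem:pp} to each case: the associated eigenfunction $\Psi$ satisfies $\mu(\Psi)=(p+1)^2$, giving the values $16, 25, 49, 64, 81$. Comparing with the labels yields $16<20$, $25<31$, $49<65$, $64<86$ and $81<113$, so in every case $\mu(\Psi)<n$. By Courant's Nodal Domain theorem a Courant-sharp eigenfunction for $\lambda_{n,h}$ must have exactly $n$ nodal domains, so each of these eigenvalues fails to be Courant-sharp for $0<h\leq h_0$, where $h_0$ is the smallest of the finitely many thresholds arising above. There is no serious obstacle here: all the analytic content lies in Proposition~\ref{lem:pp}, and the remaining steps are the sum-of-two-squares check giving simplicity, the stability of the labelling, and the elementary comparison $(p+1)^2<n$.
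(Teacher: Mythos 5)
Your proposal is correct and follows essentially the same route as the paper: the corollary is read off from Proposition~\ref{lem:pp} after observing that $18,32,72,98,128$ admit only the representation $p^2+p^2$ as sums of two squares (so $\lambda_{p,p,0}$ is simple with labels $20,31,65,86,113$ for $p=3,4,6,7,8$), that the labelling and simplicity persist for $h$ small, and that $\mu(\Psi)=(p+1)^2<n$ in each case. The only difference is that you spell out the labelling-stability and sum-of-two-squares checks which the paper leaves implicit in the phrase ``are simple and correspond to $p=3,4,6,7$ and $8$''.
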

We note that for $p=5$, the argument does not work for $h=0$ because of the multiplicity $3$, but could be modified by observing that the eigenvalue becomes simple for $h >0$. In any case, this was treated in Proposition \ref{cor:antimirror} by another argument. The cases $p=9$ and $p=10$  were already treated in  Proposition \ref{prop5.11}.\\
We recall that in the case $p=2$, we have Proposition \ref{prop:lambda9}.

\section{Perturbation theory for nodal domains at the boundary, the case $h$ small}\label{s6}

 As in \cite{HPS1,GH}, we make use of a result due to Leydold, \cite{Ley}, that for a $C^\infty$-family of eigenfunctions, the number of nodal domains is constant unless there are stationary points in the interior or the cardinality of the boundary points changes.

In  \cite{GH}, we have shown that the number of nodal domains cannot locally increase around some interior point and at a regular point of the boundary by considering a small perturbation of the Dirichlet case.
The proof made use of the Faber-Krahn inequality, hence cannot be applied for $h$ close to zero for the ``boundary'' nodal domains. Our aim here is to treat a small perturbation of the Neumann case under stronger assumptions, but which
could be generic (or satisfied in many cases).

\subsection{At the boundary far from the corners}
  We consider a $C^\infty$-family of eigenfunctions $\Phi(x,y,\theta,h)$ depending on $\theta$ and $h\in [0,h_0)$,
  with corresponding eigenvalue $\lambda(h)$, satisfying the $h$-Robin condition.
  We assume that on one side  of the square
  (say $x=-\frac \pi 2$ to fix the ideas) there is a point $y_0\in (-\frac \pi 2,\frac \pi 2)$ and some $\theta_0 \in [0,\pi]$ such that the following condition is satisfied, for $z_0=(-\frac \pi 2,y_0)$,
   \begin{equation}\label{6.1a}
  \begin{array}{rl}
   \Phi(z_0,\theta_0,0) & =0\,,\\
   \partial_y   \Phi(z_0,\theta_0,0) & =0\,,\\
    \partial_y^2  \Phi(z_0,\theta_0,0) & \neq 0\,,\\
     \partial_\theta  \Phi(z_0,\theta_0,0) & \neq 0\,.
     \end{array}
     \end{equation}
     We also assume that there exist positive constants  $\epsilon_1$ and $\eta_1$ such that
     in a neighbourhood $B(z_0,\epsilon_1)$ of $z_0$, and for $|\theta-\theta_0| +h \leq \eta_1$ in $\mathbb R^2$, $\Phi(\cdot,\theta,h)$ satisfies;
     \begin{equation}\label{6.2a}
     -\Delta \Phi (x,y,\theta,h) = \lambda (h) \Phi (x,y,\theta,h)\,.
     \end{equation}

     Our aim is to prove the following proposition
     \begin{prop}\label{Proposition6.1} Under Assumptions \eqref{6.1a} and  \eqref{6.2a},
     there exist positive constants $\epsilon_0$ and $\eta_0$ such that in $B(z_0,\epsilon_0)\cap S$ the number of nodal domains of $ \Phi(\cdot ,\theta,h)$
     is $3$ for $(\theta,h) =(\theta_0,0)$ and is $\leq 3$ for $|\theta-\theta_0| + h \leq \eta_0$.
     \end{prop}
     We first give some consequences of \eqref{6.1a}. \\
     We first observe that they imply the following
     \begin{equation*}
     \nabla \Phi (z_0,\theta_0,0) =0\,.
     \end{equation*}
     Hence, for $(\theta,h)=(\theta_0,0)$, $z_0$ is a critical point of $\Phi (\cdot,\theta_0,0)$ considered as a function on $\mathbb R^2$.\\
     The second consequence is that  by differentiating the Neumann condition  tangentially, we obtain that
     \begin{equation*}
     \partial^2_{x,y} \Phi (z_0,\theta_0,0) =0\,.
     \end{equation*}
     Coming back to our assumptions, we can assume (w.l.o.g) that
     \begin{equation}\label{6.1b}
     \begin{array}{rl}
   \Phi(z_0,\theta_0,0) & =0\,,\\
   \partial_y   \Phi(z_0,\theta_0,0) & =0\,,\\
    \partial_{yy}^2  \Phi(z_0,\theta_0,0) & < 0\,,\\
     \partial_\theta  \Phi(z_0,\theta_0,0) & < 0\,.
     \end{array}
     \end{equation}
     We can indeed always come back to this situation by replacing $\Phi$ by $-\Phi$ or $\theta$ by $-\theta$.\\
     The third consequence of \eqref{6.1b} and \eqref{6.2a} is that
     \begin{equation} \label{6.2b}
       \partial_{xx}^2  \Phi(z_0,\theta_0,0)  > 0\,.
     \end{equation}

     Hence, $z_0$ is a zero critical point of $\Phi (\cdot,\theta_0,0)$ with non-degenerate Hessian with signature $(-,+)$.
      It then follows, using \eqref{6.2a}, the local structure of an eigenfunction of the Laplacian in the neighbourhood of $z_0$,  and the Neumann condition, that there exist  positive constants $\epsilon_0 <  \epsilon_1$ and $\eta_0<  \eta_1$ such that in $B(z_0,\epsilon_0)\cap S$ the  nodal set of $  \Phi(\cdot,\theta_0,0)$
      consists  of two half-lines emanating from $z_0$  separated by angle $\frac \pi 2$ and making angle $\frac \pi 4$ with $x=-\frac \pi 2$ and crossing $\partial B(z_0,\epsilon_0) \cap \bar S$ transversally at exactly two points $z_1$ and $z_2$ in $S$, see Figure~\ref{fig:Nbdry}.

      \begin{figure}[htp]
      \centering
      \includegraphics[width=3cm]{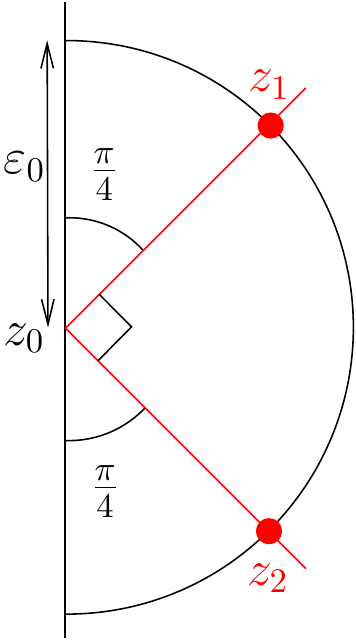}
      \caption{Two nodal lines emanating from $z_0$ separated by angle $\frac{\pi}2$.}
      \label{fig:Nbdry}
      \end{figure}

     The fourth consequence (that follows from the first and last lines in \eqref{6.1b}) is that there exists $\theta_\pm$ such that
     \begin{equation}\label{6.3b}
     \begin{array}{l}
     \theta_- < \theta_0 < \theta_+\,,\\
      \Phi
     (z_0,\theta_-,0) >0 >\Phi
     (z_0,\theta_+,0)\,,\\
     \partial_\theta \Phi
     (z_0,\theta, 0) < 0\,,\, \mbox{ for } \theta \in [\theta_-,\theta_+]\,.
     \end{array}
     \end{equation}
      We note that these properties are stable. There exist positive (possibly smaller) $\epsilon_0$ and $\eta_0$ such that, for $|\theta-\theta_0|+h \leq \eta_0$ the zero set
       of $\Phi (\cdot,\theta,h)$ is crossing $\partial B(z_0,\epsilon_0) \cap \bar S$ transversally at exactly two points $z_1(\theta,h)$ and $z_2(\theta,h)$ in $S$.
       Moreover, we have
       \begin{equation*}
       \lim_{(\theta,h)\rightarrow (\theta_0,0)} z_j(\theta,h) = z_j \mbox{ for } j=1,2\,.
       \end{equation*}
        We now apply standard Morse theory for
       $$
       \psi (y,\theta,h) := \Phi (-\frac \pi 2,y,\theta,h)\,.
       $$
       This is a Morse function for $(\theta,h)$ sufficiently close to $(\theta_0,0)$ and $\psi(\cdot,\theta,h)$ admits a unique critical point $y(\theta,h)$ close to $y_0$:
       \begin{equation*}
       \partial_y \psi (y(\theta,h),\theta,h)=0\,.
       \end{equation*}
       We now look at the behaviour of  the function $\chi$ which is defined by
       \begin{equation*}
       \chi (\theta,h) :=\psi (y(\theta,h), \theta,h)\,.
       \end{equation*}
      We note that $\partial_\theta \chi$ is close to $\partial_\theta \psi$ and using the stability of \eqref{6.3b}, we obtain the existence of a (possibly smaller) $ \eta_0 > 0$ such  that for $0 \leq h\leq \eta_0\,$, there exists
      a unique $ \theta (h) \in (\theta_0-\eta_0,\theta_0 +\eta_0)$ such that
       \begin{equation*}
       \chi (\theta(h), h)= 0 \mbox{ and } \theta(0)=\theta_0\,.
       \end{equation*}
       We now observe that,  by construction, $y (\theta(h),h)$ satisfies
       \begin{equation*}
       \partial_y \psi (y(\theta(h)),h),\theta(h) ,h)= \psi (y(\theta(h),h),\theta(h),h) =0\,.
       \end{equation*}
       We would like to deduce that for $\theta< \theta(h)$, $\psi (y,\theta,h)$ has two zeros $y_\pm(\theta,h)$ such that
       $y_- (\theta,h) < y(\theta,h) < y_+ (\theta,h)$ and that, for $\theta >\theta(h)$, $\psi (y,\theta,h)$ has no zero.\\
       Using the Taylor expansion with integral remainder term, we can write
        $$
         \psi (y ,\theta ,h)- \chi (\theta,h) = c(y,\theta,h)  (y-y(\theta,h))^2\,.
         $$
         where $c(y_0,\theta_0,0) < 0$.\\
         We make the change of variable:
         $$
         y \mapsto \tilde y = \sqrt{-c (y,\theta,h)} (y-y(\theta,h))
         $$
         and denote by $\tilde y \mapsto y =\zeta( \tilde y , \theta,h)$ its inverse map.\\
         If $y$ is a zero of $\psi$, we obtain:
         $$
         \chi (\theta,h) = \tilde y ^2\,,
         $$
         Hence, if $\chi (\theta,h)\geq 0$,  we have two solutions
         $$
         \tilde y_\pm =\pm \sqrt{\chi(\theta,h)}\,.
         $$
        Coming back to the initial coordinates, we get
       $$
       y_\pm (\theta,h) = y(\theta,h) \pm \frac{\sqrt{\chi (\theta,h)}}{ \sqrt{-c ( \zeta (\tilde y_\pm,\theta,h),\theta,h)}    } \,.
       $$
Using the Robin condition, we also obtain that
         \begin{align*}%\label{6.8b}
       \partial_x \Phi (-\pi/2, y(\theta(h),h),\theta(h),h)&=  h \Phi (-\pi/2, y(\theta(h),h),\theta(h),h) \notag\\
       & = h  (\psi (y(\theta(h),h),\theta(h),h) =0\,.
       \end{align*}
       Hence, for $h$ small enough,
       $$z_c(h):=(-\pi/2, y (\theta(h),h))$$
        is a zero critical point of $\Phi (\cdot,\theta(h),h)$.\\

        At this stage, we have achieved the analysis of the intersection of the zero set at the boundary where we have distinguished three situations depending on $\theta-\theta(h)$:  the intersection consists of two points, one double point and no point in a fixed  neighbourhood of $z_0$ in the boundary $(y_0-\epsilon_0,y_0 + \epsilon_0)$.\\

       In order to control the topology of the nodal set in $B(z_0,\epsilon_0)\cap \bar S$, we now analyse if $\Phi (\cdot, \theta,h)$ can have critical points in $S\cap B(z_0,\epsilon_0)$.\\
       As a function of $(x,y)$, $\Phi(\cdot,\theta,h)$ is a Morse function for $(\theta,h)$ close to $(\theta_0,0)$. Hence, there exists a unique critical point $\hat z (\theta,h)$ close to $z_0$. Let us show that if it is a zero critical point it has to be on the boundary. If there was such a zero critical point, we would have, using the Taylor expansion of $\Phi (z, \theta,h)$ at $\hat z (\theta,h)$ and taking
       $z= \check z (\theta,h):= (-\frac \pi 2, y (\theta,h))\,,$
     $$
     \begin{array}{ll}
     0&= \Phi (\hat z (\theta,h), \theta,h)  \\
     & = \Phi ( \check z (\theta,h), \theta,h)  +  \mathcal O (|\hat z (\theta,h)-\check z(\theta,h)|^2)\,.
    \end{array}
     $$
We now observe that $$\check z(\theta(h),h) =  \hat z (\theta(h),h)=z_c(h)$$
by uniqueness of the critical point, hence
  \begin{equation*}
  \hat z (\theta,h)-\check z(\theta,h)= \mathcal O (|\theta -\theta(h)|)\,.
  \end{equation*}
   This implies
  \begin{equation*}
  0 =  \Phi ( \check z (\theta,h), \theta,h) + \mathcal O (|\theta-\theta(h)|^2) \,.
   \end{equation*}
   We now take the Taylor expansion of $\Phi ( \check z (\theta,h), \theta,h)$ at $\theta=\theta(h)$ to obtain
   $$
   \Phi ( \check z (\theta,h), \theta,h) = \partial_\theta \Phi ( \check z (\theta(h),h), \theta(h),h)(\theta-\theta(h)) + \mathcal O (|\theta-\theta(h)|^2)\,.
   $$
    Altogether, we have
   $$
   \partial_\theta \Phi ( \check z (\theta(h),h), \theta(h),h)(\theta-\theta(h)) = \mathcal O (|\theta-\theta(h)^2)
   $$
   whose unique solution is $\theta=\theta(h)$, observing that $  \partial_\theta \Phi ( \check z (\theta(h),h), \theta(h),h)\neq 0$.  We conclude that $\check z(\theta(h),h) =  \hat z (\theta(h),h)=z_c(h) \in \partial S$
   is the only possibility.\\

   We now look at the topology of the nodal set in $\bar S \cap \bar B (z_0,\epsilon_0)$.
   If $\theta > \theta(h)$, we have no point at the boundary and the only possibility (having in mind that we have no zero critical point) is an arc inside  $S$ joining $z_1(\theta,h)$ and $z_2(h,\theta)$. In this case, we have two nodal sets in $\bar S \cap \bar B (z_0,\epsilon_0)$.\\
   If $\theta=\theta(h)$ the only possibility is that the nodal set consists of two arcs joining $z_j (\theta,h)$ ($j=1,2$) and $\check z (\theta(h),h)$ at the boundary.
   In this case, we have three nodal sets.\\
   Finally if $\theta < \theta(h)$, we can exclude the possibility that there is one arc joining $y_-(\theta,h)$ and  $y_+(\theta,h)$ and another joining  $z_1(\theta,h)$ and  $z_2(h,\theta)$. Of course they cannot intersect because this would create a critical point.
   We observe that
     $$\Phi
     (\check z (\theta,h) ,\theta,h) >0\,.
     $$
     Now we have for $x\geq -\frac \pi 2$, taking the Taylor expansion  of
     order $2$ at the point  $-\frac \pi 2$ in the first variable,
     $$\Phi
      (x, y(\theta,h) ,\theta,h) = \Phi (\check z (\theta,h) ,\theta,h)+ \partial_x \Phi (\check z (\theta,h) ,\theta,h)(x+\frac \pi 2) + d(x,\theta,h)  (x+\frac \pi 2) ^2\,,
     $$
     with $d(-\frac \pi 2,\theta_0,0) = \partial_{xx}\Phi (-\frac \pi 2, y_0,\theta_0,0) >0$ (by \eqref{6.2b}).

     Using again the Robin condition and the non-negativity of $h$, we observe that
     $$
     \partial_x \Phi (\check z (\theta,h) ,\theta,h) = h \Phi (\check z (\theta,h) ,\theta,h) \geq 0\,.
     $$
     Hence the nodal set for $h\geq 0$ does not meet the horizontal segment $y=y(\theta,h)$ in $B(z_0,\epsilon_0)$. The only remaining possibility is consequently that the nodal set consists of two  non-intersecting arcs
     connecting two points of the boundary to two points of $\partial B (z_0,\epsilon_0)\cap S$ determining three nodal sets.\\
     This achieves the proof of the proposition.\qed \\

  It remains to understand what can occur at the corners.

\subsection{Analysis at the corner}
    We begin with the Neumann case and we work in $\hat S:=(0,\pi)^2$
    because on $\hat{S}$ there is a unique expression for the eigenfunctions (this allows us to avoid a discussion of four different cases below).\\
    We are interested in the families (for say $p<q$)
    $$
    \Phi(x,y,\theta,0) =\cos \theta \cos p x  \cos qy + \sin \theta \cos py \cos qx\,.
    $$
    We look at the situation at the corner $(0,0)$. The first question is to know if there is a zero. We observe that
    $$
    \Phi (0,0,\theta)= (\cos \theta + \sin \theta).
    $$
    Hence the only case is for $\theta_0=\frac {3\pi}{4}$. \\
    If we look at other corners we only meet the same $\theta$ or $\theta= \frac \pi 4$ depending on the parities of $p$ and $q$.
   Due to the Neumann condition, the corner $z_0=(0,0)$ is a critical point. Looking at the second derivative, we observe that
   $$
    \partial_{xx}^2  \Phi(0,0,\theta_0,0) = \cos \theta_0 (-p^2) + \sin \theta_0 (-q^2)= \cos \theta_0 (q^2-p^2) <  0\,.
    $$
    Similarly
     $$
    \partial_{yy}^2 \Phi(0,0,\theta_0,0) = - \cos \theta_0 (q^2-p^2) > 0\,.
    $$
    with opposite sign, and
      $$
    \partial_{xy}^2  \Phi(0,0,\theta_0,0) = 0\,.
    $$
    The zero set for $\theta=\theta_0$  is simply $x=y\,$.\\
    Finally, we note that
    $$
    \partial_\theta  \Phi (0,0,\theta_0) = 2 \cos \theta_0 <  0\,.
    $$
    The guess is simply that this situation is stable for $(\theta,h)$ close to $(\theta_0,0)$.\\

    Coming back to $S$ and supposing that $p$ and $q$ are even to fix the ideas ($p=2 \hat p$ and $q= 2 \hat q$), we
    focus on $z_0= (-\frac \pi 2,-\frac \pi 2)$. The family we are interested in is
    $$
    \Phi_{p,q} (x,y,\theta,h):= \cos \theta \cos  (\alpha_p  x/\pi) \cos(\alpha_q   y/\pi) + \sin \theta  \cos  (\alpha_p  y /\pi) \cos(\alpha_q   x/\pi)
    $$
    and we have
    $$
    \Phi_{p,q} (-\pi/2,-\pi/2,\theta,h):= ( \cos \theta + \sin \theta) \cos  (\alpha_p /2) \cos(\alpha_q  /2)\,,
    $$
    with
    $$
    \cos  (\alpha_p /2) \cos(\alpha_q  /2) = (-1)^{\hat{p} + \hat{q}} + \mathcal O (h) \neq 0\,.
    $$
    The other cases are similar.\\

   Hence, we consider a $C^\infty$-family of eigenfunctions $\Phi(x,y,\theta,h)$ depending on $\theta$ and $h\in [0,h_0)$,  with corresponding eigenvalue $\lambda(h)$, satisfying the $h$-Robin condition
   and assume that at a corner $z_0 = (x_0,x_0)$, we have for some $\theta_0$ and $h_0>0$
   \begin{equation}\label{6.1c}
  \begin{array}{rl}
   \Phi(x,x,\theta_0,h) & =0\,,\,\forall h\in [0,h_0]\\
   \nabla   \Phi(z_0,\theta_0,h) & =0\,,\,\forall h\in [0,h_0]\\
    \partial_y^2  \Phi(z_0,\theta_0,0) & \neq 0\,,\\
     \partial_\theta  \Phi(z_0,\theta_0,0) & \neq 0\,.
     \end{array}
     \end{equation}
     We also assume that there exist positive constants $\epsilon_1$ and $\eta_1$ such that
     in a neighbourhood $B(z_0,\epsilon_1)$ of $z_0$ and for $|\theta-\theta_0| +h \leq \eta_1$ in $\mathbb R^2$, $\Phi(\cdot,\theta,h)$ satisfies;
     \begin{equation}\label{6.2c}
     -\Delta \Phi (x,y,\theta,h) = \lambda (h) \Phi (x,y,\theta,h)\,.
     \end{equation}
     If we are at a corner of the form $(x_0,-x_0)$, we are in a similar situation with the diagonal $ y=x$ replaced by the diagonal $ y=-x$.

     Our aim is to prove the following proposition
     \begin{prop}\label{Proposition6.2} Under Assumptions  \eqref{6.1c} and \eqref{6.2c},
     there exist positive constants $\epsilon_0$ and $\eta_0$ such that in $B(z_0,\epsilon_0)\cap S$ the number of nodal domains of $ \Phi(\cdot ,\theta,h)$
     is $2$ for $|\theta-\theta_0| + h \leq \eta_0$.
     \end{prop}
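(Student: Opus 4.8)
The plan is to reproduce the Morse-theoretic analysis of Proposition~\ref{Proposition6.1}, now adapted to the corner, where $B(z_0,\epsilon_0)\cap S$ is a right-angled sector rather than a half-disk. First I would read off the local structure of $\Phi(\cdot,\theta_0,0)$ at $z_0=(x_0,x_0)$ (take $z_0=(-\tfrac\pi2,-\tfrac\pi2)$; the remaining corners are symmetric). The first two lines of \eqref{6.1c}, together with the Robin condition on the two sides meeting at $z_0$ (which forces $\partial_x\Phi=h\Phi=0$ and $\partial_y\Phi=h\Phi=0$ at a boundary zero), give that $z_0$ is a zero critical point: $\Phi(z_0,\theta_0,0)=0$ and $\nabla\Phi(z_0,\theta_0,0)=0$. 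From \eqref{6.2c} and $\Phi(z_0,\theta_0,0)=0$ I get $\partial^2_{xx}\Phi(z_0,\theta_0,0)=-\partial^2_{yy}\Phi(z_0,\theta_0,0)\neq0$, while differentiating the identity $\Phi(s,s,\theta_0,0)=0$ twice along the diagonal and inserting $\partial^2_{xx}\Phi+\partial^2_{yy}\Phi=0$ forces $\partial^2_{xy}\Phi(z_0,\theta_0,0)=0$. Hence the Hessian is diagonal of signature $(+,-)$, $z_0$ is a nondegenerate zero critical point, and the nodal set near $z_0$ is the diagonal $y=x$ together with the anti-diagonal, crossing transversally. Only the diagonal enters $S$, so in $B(z_0,\epsilon_0)\cap\bar S$ the nodal set of $\Phi(\cdot,\theta_0,0)$ is a single arc from $z_0$ to one transversal crossing point $z_1$ of $\partial B(z_0,\epsilon_0)\cap S$, giving exactly $2$ nodal domains.

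Normalising by replacing $\Phi$ by $-\Phi$ and $\theta$ by $-\theta$ if necessary (as in \eqref{6.1b}) so that $\partial^2_{yy}\Phi(z_0,\theta_0,0)>0$, hence $\partial^2_{xx}\Phi(z_0,\theta_0,0)<0$, and $\partial_\theta\Phi(z_0,\theta_0,0)<0$, I would track the two boundary restrictions $\psi_1(y)=\Phi(-\tfrac\pi2,y,\theta,h)$ and $\psi_2(x)=\Phi(x,-\tfrac\pi2,\theta,h)$. The Robin identity gives $\psi_1'(-\tfrac\pi2)=h\,\psi_1(-\tfrac\pi2)$ and $\psi_2'(-\tfrac\pi2)=h\,\psi_2(-\tfrac\pi2)$, so each restriction is essentially a pure quadratic with vertex within $O(h)$ of the corner, with $\psi_1''(-\tfrac\pi2)\approx\partial^2_{yy}\Phi>0$ and $\psi_2''(-\tfrac\pi2)\approx\partial^2_{xx}\Phi<0$. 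Since the first line of \eqref{6.1c} gives $\Phi(z_0,\theta_0,h)=0$ for all small $h$, a Taylor expansion in $\theta$ yields $\Phi(z_0,\theta,h)=\partial_\theta\Phi(z_0,\theta_0,0)(\theta-\theta_0)+o(|\theta-\theta_0|)+O(h)$. For $\theta>\theta_0$ the corner value is negative, so $\psi_1$ (opening upward) acquires exactly one simple zero just above the corner while $\psi_2$ (opening downward, with max below zero) keeps a constant negative sign; for $\theta<\theta_0$ the roles of the two sides are exchanged; for $\theta=\theta_0$ the diagonal stays a nodal line through $z_0$. In each case the picture is a single nodal arc cutting the sector into $2$ domains.

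To make this rigorous I would, as in Proposition~\ref{Proposition6.1}, combine stability with Morse theory. Along the quarter-circle $\partial B(z_0,\epsilon_0)\cap\bar S$ the sign of $\Phi$ is $+$ near its side-$1$ endpoint and $-$ near its side-$2$ endpoint for all $(\theta,h)$ small, so there is exactly one transversal crossing point $z_1(\theta,h)\to z_1$ and no new crossing can appear. Moreover $\Phi(\cdot,\theta,h)$ remains a Morse function with a unique critical point $\hat z(\theta,h)\to z_0$, and I would show by the $\theta$-expansion above that if $\hat z(\theta,h)$ is a zero of $\Phi$ it must coincide with the corner, which happens only for $\theta=\theta_0$; uniqueness of the critical point then excludes any interior nodal crossing.

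The main obstacle is to exclude spurious nodal components near the corner and thereby fix the global topology in $B(z_0,\epsilon_0)\cap S$. An interior closed loop is ruled out by Faber–Krahn, since $\lambda(h)$ stays bounded and for $\epsilon_0$ small an interior nodal domain cannot fit; but, as noted at the start of Section~\ref{s6}, this tool is unavailable for domains touching the boundary when $h$ is small. This is precisely the configuration handled in Proposition~\ref{Proposition6.1}, and I would reuse the non-negativity of $h$ through $\partial_\nu\Phi=-h\Phi$ to control the sign of the normal derivative along the two sides, which prevents the nodal arc from recrossing a side and so forbids an extra boundary arc. Assembling these facts — one transversal crossing of $\partial B(z_0,\epsilon_0)\cap S$, at most one boundary zero on the relevant side, no interior zero critical point, and no extra components — leaves the single-arc configuration as the only possibility, so the number of nodal domains in $B(z_0,\epsilon_0)\cap S$ equals $2$ throughout $|\theta-\theta_0|+h\leq\eta_0$, which proves Proposition~\ref{Proposition6.2}.
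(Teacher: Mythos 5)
Your proposal is correct and follows essentially the same route as the paper's proof: you exploit that the corner is a zero critical point for $\theta=\theta_0$ and all small $h$ (so the critical angle $\theta(h)\equiv\theta_0$ on each side), track the single boundary zero migrating from one side to the other, exclude zero critical points except at the corner for $\theta=\theta_0$ via uniqueness of the Morse critical point combined with the $\theta$-expansion, and conclude that the nodal set is a single arc joining the boundary point to the unique transversal crossing of $\partial B(z_0,\epsilon_0)\cap S$, giving $2$ nodal domains. The paper's own argument is a compressed version of exactly this, deferring the details to the proof of Proposition~\ref{Proposition6.1}; your only slips are cosmetic (the corner-value expansion should read $\partial_\theta\Phi(z_0,\theta_0,h)(\theta-\theta_0)+\mathcal O(|\theta-\theta_0|^2)$ with $\partial_\theta\Phi(z_0,\theta_0,h)=\partial_\theta\Phi(z_0,\theta_0,0)+\mathcal O(h)$, rather than carrying a standalone $\mathcal O(h)$ term).
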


     The proof is rather close to that of the previous subsection, except that we have the additional information that $\theta (h)=\theta_0$ for each side. The intersection of the nodal set with the boundary is a point $z(\theta,h)$ which moves from one side to the next side. By assumption $z(\theta_0,h)$ is the unique critical point of $\Phi (\cdot,\theta_0,h)$
     and $z(\theta_0,h)=z_0$. \\

   The second step is to show that the only zero critical point is for $\theta =\theta_0$ and at the corner.
   We suppose that $\hat z (\theta,h)$ is a zero critical point of $\Phi (\cdot,\theta,h)$ and as in the previous subsection we get
   $$
  0 = \Phi ( \hat z (\theta,h),\theta,h)= \Phi (z(\theta,h) , \theta,h) + \mathcal O ( | \hat z (\theta,h)- z(\theta,h) |^2)\,.
  $$
  By uniqueness of the critical point, we have
  $$ \hat z (\theta,h)- z(\theta,h)=  \mathcal O (|\theta-\theta_0|)\,.
  $$
  On the other hand we have
    $$
  \Phi ( z(\theta,h) , \theta,h)  = \partial_\theta \Phi  (z_0,\theta_0, h) (\theta-\theta_0)
   + \mathcal O (|\theta - \theta_0|)^2)\,.
   $$
    Using $\partial_\theta  \Phi  (z_0,\theta_0, 0) \neq 0$, we get $\theta=\theta_0$ and that the critical point should be for $\theta_0$ and equal to $z_0$.

   The last step is easier, we can only have a curve joining the corner and the unique point on $\partial B (z_0,\epsilon_0) \cap S$ which is actually quite close to the diagonal.

\section{Zero critical points at the boundary}\label{s7}

Note that in this section, the eigenfunctions are written on $\hat S:=(0,\pi)^2$. The advantage is that we have a unique expression for the eigenfunctions in the case $h=0$.  Since the case where $p=q$ was already treated in Section~\ref{spp},
we assume that $p \neq q$ in what follows.

\subsection{Analysis at the corner}
We are only interested in families of eigenfunctions for the Neumann case in the form
\[
\Phi_{p,q}(x,y,\theta)
=\cos \theta \cos p x \cos q y + \sin \theta \cos p y \cos q x\,.
\]
It is not difficult to show that the corners $(0,0)$ and $(\pi,\pi)$ only belong to the nodal set
when $\cos \theta + \sin \theta =0$, hence  for
$\theta= \frac{3\pi}{4}$.
For the corners $(0,\pi)$ and $(\pi, 0)$, we get $\cos \theta + \sin \theta (-1)^{p+q} =0$ which leads to $\theta =\frac \pi 4$ or $\frac{3\pi}{4}$ depending on the parity of $p+q$.\\
In  each
case, we observe that the diagonal arriving  at
the corner belongs to the zero set.
For $h >0$, we observe that the same corners are zeros and critical for the same value of $\theta \in \{\frac \pi 4,\frac{3\pi}{4}\}$.\\

 In order to apply Proposition~\ref{Proposition6.2}, we have that equation \eqref{6.1c} holds since we consider the Robin eigenfunctions and the first two conditions of \eqref{6.2c} are satisfied, as observed above. It remains to verify whether
the third and fourth conditions of \eqref{6.2c} are satisfied. We discuss this at the end of  Subsection~\ref{ss:8.2}.

\subsection{Analysis at the boundary outside the corners}\label{ss:8.2}

The zero critical points  of $z \mapsto \Phi_{p,q}(z,\theta)$ are determined   by
\[
\begin{aligned}
\cos \theta \cos p x \cos q y + \sin \theta \cos p y \cos q x &=0\,,\\
p  \cos \theta  \sin p x \cos q y + q \sin \theta \cos p y \sin q x &=0\,,\quad\text{and}\\
q \cos \theta \cos p x \sin q y + p \sin \theta \sin p y \cos q x &=0\,.
\end{aligned}
\]
On $\hat{S}=(0,\pi)^2$, by the analogous results to \eqref{eq:2.8} and \eqref{eq:thetared1},
it suffices to consider the boundary edge $x=0$.
If we are on the side $x=0$, outside the corner,  then, for $y\in (0,\pi)$,  we get
\begin{equation*}%\label{critaa}
\begin{aligned}
\cos \theta \cos q y + \sin \theta \cos p y &=0\,,\\
q \cos \theta \sin q y + p \sin \theta \sin p y  &=0\,.
\end{aligned}
\end{equation*}
\begin{rem}\label{rem:8.1}
We note that, for a zero critical point $(0,y)$ at the boundary $\{x=0\}$, we have necessarily $\theta \neq 0$ and $ \theta \neq \frac \pi 2$ and that  $\cos py =0$ if and only if  $\cos q y =0$.
\end{rem}

If
 \begin{equation}\label{eq:7.2}
 \cos py \, \cos q y \neq 0\,,
 \end{equation}
  we get
\begin{equation*}%\label{critab}
\begin{aligned}
\tan \theta &=   -\cos py/\cos qy \,,\\
q  \tan  qy & = p \tan  py  \,.
\end{aligned}
\end{equation*}

We first consider the particular cases $(0,q)$ or $(p,0)$ (with $p>1$ or $q>1$). In this case the above condition  \eqref{eq:7.2} is satisfied and we get
\begin{lem}\label{lem8.2}~
\begin{enumerate}
\item If $p=0$ and $q>1$, we have $y = k \frac{\pi}{q}$ for some $k \in \{1,\dots, q-1\}$ and $\tan \theta = (-1)^{k+1}$.
\item If $p >1$ and $q=0$, we have  $y = k \frac{\pi}{p}$ for some $k \in \{1,\dots, p-1\}$ and $\tan \theta = (-1)^{k+1}$.
\end{enumerate}
Moreover this critical point is non-degenerate and satisfies the assumptions of Proposition \ref{Proposition6.1}.
\end{lem}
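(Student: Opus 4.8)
The plan is to reduce everything to the two boundary equations obtained just before the statement and then read off the solutions case by case. After the reduction to the edge $x=0$ (via the analogues of \eqref{eq:2.8} and \eqref{eq:thetared1}), and using that the Neumann condition makes $\partial_x\Phi_{p,q}$ vanish identically on $\{x=0\}$, a zero critical point $(0,y)$ with $y\in(0,\pi)$ is exactly a solution of
\[
\cos\theta\cos qy+\sin\theta\cos py=0,\qquad q\cos\theta\sin qy+p\sin\theta\sin py=0.
\]
First I would substitute the degenerate value of one index into this pair.

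For $p=0$ and $q>1$ the second equation becomes $q\cos\theta\sin qy=0$. By Remark~\ref{rem:8.1}, $\theta\neq0,\tfrac\pi2$, so $\cos\theta\neq0$; since $q\neq0$ this forces $\sin qy=0$, and as $y\in(0,\pi)$ the only solutions are $y=k\pi/q$ with $k\in\{1,\dots,q-1\}$. Then $\cos qy=(-1)^k$ and the first equation reads $(-1)^k\cos\theta+\sin\theta=0$, i.e. $\tan\theta=(-1)^{k+1}$. For $p>1$ and $q=0$ the roles of the two summands are swapped: the second equation gives $p\sin\theta\sin py=0$, hence (as $\sin\theta\neq0$) $\sin py=0$, so $y=k\pi/p$ with $k\in\{1,\dots,p-1\}$, and dividing the first equation $\cos\theta+(-1)^k\sin\theta=0$ by $\cos\theta$ again yields $\tan\theta=(-1)^{k+1}$. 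This proves items (1) and (2).

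For the ``moreover'' statement I would check the remaining hypotheses of Proposition~\ref{Proposition6.1}, namely the last two lines of \eqref{6.1a} together with non-degeneracy, writing $z_0$ for the boundary point $(0,k\pi/q)$ (resp. $(0,k\pi/p)$) and $\theta_0$ for the angle just found. As in the derivation of \eqref{6.2b}, on $\{x=0\}$ tangential differentiation of the Neumann condition gives $\partial^2_{xy}\Phi_{p,q}=0$, while \eqref{6.2a} and $\Phi_{p,q}=0$ at the zero give $\partial^2_{xx}\Phi_{p,q}=-\partial^2_{yy}\Phi_{p,q}$; hence the Hessian is diagonal and its non-degeneracy is equivalent to $\partial^2_{yy}\Phi_{p,q}\neq0$. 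A direct evaluation gives $\partial^2_{yy}\Phi_{p,q}(z_0,\theta_0,0)=-q^2(-1)^k\cos\theta_0$ in the first case and $-p^2(-1)^k\sin\theta_0$ in the second, both nonzero by Remark~\ref{rem:8.1}; this simultaneously verifies the third line of \eqref{6.1a} and the signature $(-,+)$. Finally, $\partial_\theta\Phi_{p,q}$ at the point equals $-\sin\theta_0\cos qy+\cos\theta_0\cos py$, and inserting the values of $\cos qy$, $\cos py$ and the relation $\tan\theta_0=(-1)^{k+1}$ collapses this to $2\cos\theta_0$ (first case) or $-2\sin\theta_0$ (second case), which is nonzero and gives the fourth line of \eqref{6.1a}. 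Since \eqref{6.2a} holds, Proposition~\ref{Proposition6.1} applies.

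The argument is elementary, so the difficulty is not analytic but one of bookkeeping: the single step that genuinely requires the explicit value of $\theta_0$ is the verification that $\partial_\theta\Phi_{p,q}\neq0$, since a priori this quantity could vanish, and it is precisely the matching between $\tan\theta_0=(-1)^{k+1}$ and the parity factor $(-1)^k$ that rules this out. One must also track the sign conventions of \eqref{6.1b} (replacing $\Phi$ by $-\Phi$ or $\theta$ by $-\theta$) so that the signs of $\partial^2_{yy}\Phi_{p,q}$ and $\partial_\theta\Phi_{p,q}$ are brought into the normalized form demanded by Proposition~\ref{Proposition6.1}.
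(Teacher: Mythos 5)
Your proof is correct and follows essentially the same route as the paper: solve the boundary critical-point system explicitly in the degenerate cases $p=0$ or $q=0$ (forcing $\sin qy=0$, resp.\ $\sin py=0$, via Remark~\ref{rem:8.1}), then verify the two nonvanishing conditions of \eqref{6.1a} required by Proposition~\ref{Proposition6.1}, with the non-degeneracy handled by the diagonal Hessian structure that the paper leaves as ``not difficult to check''. Incidentally, your value $\partial_\theta\Phi_{0,q}=2\cos\theta_0=2(-1)^{k+1}\sin\theta_0$ corrects a harmless sign slip in the paper's expression $2\sin\theta(-1)^{k}$; the conclusion that it is nonzero is unaffected.
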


\begin{proof}
 It is not difficult to check that the critical point is non-degenerate so we only detail the last statement. Note that we have $\theta \in \{\frac \pi 4,\frac{3\pi}{4}\}$.

In the first case, we just observe that at the critical point $(0,  k \frac{\pi}{q})$, we have
$$
|\partial_{yy}^2 \Phi_{0,q}(0,  k \frac{\pi}{q})| = q^2 /\sqrt{2} \neq 0\,.
$$
The other condition reads
$$
\partial_{\theta} \Phi_{0,q}(0,  k \frac{\pi}{q})= \sin \theta (-1)^{k+1} + \cos \theta = 2 \sin \theta (-1)^{k} \neq 0 \,.
$$
 (Note that $k$ even corresponds to $\theta=\frac{3\pi}{4}$ and $k$ odd corresponds to $\theta=\frac{\pi}{4}\,$.)
\end{proof}

We now consider the case when $pq\neq 0\,$, $p\neq q\,$.\\
\paragraph{Analysis of \eqref{eq:7.2}}~\\
We first assume that $p$ and $q$ are mutually  prime.\\
Let us consider a solution of $\cos py =0$ and $\cos qy =0$ with $y\in (0,\pi)$.\\
From the assumptions we get that
\begin{equation} \label{eq:7.4}
y = \frac{(2m+1)\pi}{2 p}  = \frac{(2n+1)\pi}{2 q}\,,
\end{equation}
where $m$ and $n$ are non-negative integers, satisfying
\begin{equation}\label{eq:7.5}
(2m+1) < 2p \mbox{ and } (2 n +1) < 2q\,.
\end{equation}
\eqref{eq:7.4} implies
\begin{equation*}%\label{eq:7.6}
(2m+1) q = (2n+1) p\,.
\end{equation*}
 Since $p$ and $q$  are mutually prime, this implies the existence of a positive integer $\ell$ such that
$$
(2m+1) = \ell p\,.
$$
This would imply, by \eqref{eq:7.5}, $\ell=1$ and $p= (2m+1)$ and $q=(2n+1)$ and $p+q$ even.\\
Hence neither $\cos py =0$ nor $\cos qy =0$ can
occur if $p+q$ is odd (see Remark~\ref{rem:8.1}).\\
\begin{lem}\label{lem8.3}
If $p$ and $q$ are mutually prime and $p+q$ is odd, then we can apply Proposition \ref{Proposition6.1}
 on the boundary $x = 0$ away from the corners.
\end{lem}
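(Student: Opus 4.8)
The plan is to verify that at every zero critical point $(0,y_0)$ of $\Phi_{p,q}(\cdot,\theta_0)$ lying on the edge $x=0$ and away from the corners, the four conditions of \eqref{6.1a} hold, with the tangential variable being $y$ and the normal variable $x$, so that Proposition~\ref{Proposition6.1} can be invoked; the remaining hypothesis \eqref{6.2a} is automatic because $\Phi_{p,q}$ is an eigenfunction of the Laplacian. The first two conditions, $\Phi_{p,q}(0,y_0,\theta_0)=0$ and $\partial_y\Phi_{p,q}(0,y_0,\theta_0)=0$, are precisely the two equations defining a zero critical point on $x=0$, namely
\begin{equation*}
\cos\theta_0\cos qy_0+\sin\theta_0\cos py_0=0,\qquad q\cos\theta_0\sin qy_0+p\sin\theta_0\sin py_0=0,
\end{equation*}
the normal derivative $\partial_x\Phi_{p,q}(0,y_0,\theta_0)$ vanishing automatically by the Neumann condition; so these hold by construction.

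First I would record the two facts that make the coprimality and parity hypotheses relevant. By the analysis of \eqref{eq:7.2} carried out just above, the assumption that $p,q$ are mutually prime with $p+q$ odd rules out $\cos py_0=0$ and $\cos qy_0=0$; combined with the observation in Remark~\ref{rem:8.1} that $\cos py_0=0$ if and only if $\cos qy_0=0$ at a zero critical point, this shows $\cos py_0\cos qy_0\neq0$, i.e.\ we are genuinely in case \eqref{eq:7.2}. Moreover Remark~\ref{rem:8.1} gives $\theta_0\neq0$ and $\theta_0\neq\tfrac\pi2$, hence $\sin\theta_0\neq0$ and $\cos\theta_0\neq0$.

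Next I would check the third condition by a direct computation that exploits the first defining equation. Differentiating twice in the tangential variable,
\begin{equation*}
\partial_y^2\Phi_{p,q}(0,y_0,\theta_0)=-q^2\cos\theta_0\cos qy_0-p^2\sin\theta_0\cos py_0,
\end{equation*}
and substituting $\cos\theta_0\cos qy_0=-\sin\theta_0\cos py_0$ gives $\partial_y^2\Phi_{p,q}(0,y_0,\theta_0)=(q^2-p^2)\sin\theta_0\cos py_0$, which is nonzero because $p\neq q$, $\sin\theta_0\neq0$ and $\cos py_0\neq0$. For the fourth condition I would argue by linear algebra: since $\partial_\theta\Phi_{p,q}(0,y_0,\theta_0)=\cos\theta_0\cos py_0-\sin\theta_0\cos qy_0$, were it to vanish together with the first defining equation, the pair $(\cos\theta_0,\sin\theta_0)$ would solve a homogeneous $2\times2$ system of determinant $-(\cos^2 py_0+\cos^2 qy_0)\neq0$, forcing $\cos\theta_0=\sin\theta_0=0$, a contradiction; hence $\partial_\theta\Phi_{p,q}(0,y_0,\theta_0)\neq0$.

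With all of \eqref{6.1a} verified and \eqref{6.2a} automatic, Proposition~\ref{Proposition6.1} applies on $x=0$ away from the corners, which is the assertion of the lemma. I expect the only genuinely delicate point to be confirming that we never fall into the excluded configuration $\cos py_0=\cos qy_0=0$: this is exactly where the hypotheses $\gcd(p,q)=1$ and $p+q$ odd enter, through the divisibility argument preceding the statement, and it is what guarantees that $\partial_y^2\Phi_{p,q}$ vanishes to exactly second order rather than degenerately. Once that case is ruled out, both non-degeneracy conditions reduce to the short computations above.
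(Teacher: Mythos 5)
Your proposal is correct and follows essentially the same route as the paper: the coprimality and parity hypotheses are used (via Remark~\ref{rem:8.1} and the preceding divisibility argument) to guarantee $\cos py_0\cos qy_0\neq 0$ and $\theta_0\notin\{0,\tfrac\pi2\}$, after which the nondegeneracy conditions of \eqref{6.1a} follow by substituting the nodal relation $\cos\theta_0\cos qy_0=-\sin\theta_0\cos py_0$ into $\partial_y^2\Phi_{p,q}$ and $\partial_\theta\Phi_{p,q}$. The only cosmetic differences are that the paper records the full (diagonal) Hessian rather than just $\partial_y^2\Phi_{p,q}=(q^2-p^2)\sin\theta_0\cos py_0$, and it obtains $\partial_\theta\Phi_{p,q}=\cos py_0/\cos\theta_0\neq0$ by direct substitution where you use an equivalent $2\times2$ determinant argument.
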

\begin{proof}
  Because $\cos qy \cos py \neq 0$, the critical points  on
  the boundary $ x=0$ are critical points of the function
 $$
 y \mapsto f_{p,q} (y) = \frac{\cos py}{\cos qy}\,.
 $$
  We do not need to discuss the localisation or the existence of these zero critical points, but  we have only
  to verify if the assumptions of Proposition~\ref{Proposition6.1} are satisfied  at such a point.

The Hessian is given by
 $$
 {\rm Hessian} ( \Phi_{p,q}) =\left(
 \begin{array}{cc}
 \cos py  \sin \theta (p^2-q^2) & 0 \\
 0 & \cos py  \sin \theta (q^2-p^2)
 \end{array}
 \right)
 $$
Hence this Hessian is non-degenerate because $p\neq q$ and $\theta\neq 0, \frac \pi 2$.\\
We also have that
$$
\partial_{\theta}  \Phi_{p,q}(0,y) = -\sin \theta \cos qy + \cos \theta \cos py\, ,
$$
and, since $(0,y)$ belongs to the nodal set, that
$$ \cos qy = -\frac{\sin \theta \cos py}{\cos \theta}\,.$$
Hence we obtain
$$
\partial_{\theta}  \Phi_{p,q}(0,y)  = \frac{\cos py}{\cos \theta} \neq 0\,.
$$
So Proposition~\ref{Proposition6.1} can be applied when $p,q$ are mutually prime and $p+q$ is odd.
\end{proof}

\begin{rem}\label{rem:8.4}
Suppose $p$ and $q$ are mutually prime, $p+q$ is even and $(0,y)$ is a critical point.
If $\cos py \neq 0$, then, by Remark~\ref{rem:8.1}, $\cos qy \neq 0$ and so
Proposition~\ref{Proposition6.1} applies (as in the proof of Lemma~\ref{lem8.3}).

If $\cos py =0\,$, then, by Remark~\ref{rem:8.1}, $\cos qy = 0$.
As above we have $p=2m+1$, $q=2n+1$ and $y = \frac{\pi}2$.
We have that $\partial_{\theta} \Phi_{p,q}(0,\frac{\pi}2) = 0$ if and only if
$$\tan \theta = \frac{q}{p} (-1)^{m+n-1}.$$
Hence for $\theta \neq \arctan(\frac{q}{p} (-1)^{m+n-1})$, Proposition~\ref{Proposition6.1} can be applied.
The remaining case $\theta = \arctan(\frac{q}{p} (-1)^{m+n-1})$ must be checked.

\end{rem}

 When $p$ and $q$ are not prime, we will see how we can reduce to this situation by looking at sub-squares.
 Indeed, in order to check whether conditions \eqref{6.1a} and the third and fourth conditions of \eqref{6.2c} are satisfied, we need only consider $h=0$ for which the folding argument used in \cite{HPS1} holds. More precisely,
 suppose that $p=k \tilde p$ and $q= k \tilde q$ with $\tilde p$ and $\tilde q$ mutually prime and $\tilde p + \tilde q$ odd.\\
 dividing $(0,\pi)^2$  into $k^2$  sub-squares, we can investigate whether
 the corners of these sub-squares lying in $x=0$ are in the zero set of $\Phi_{p,q}(\cdot,\theta)$  or not.
 We have indeed at a point $(0,  \frac{\ell \pi}{k})$,
 \[
\Phi_{p,q}(0 ,  \ell \pi /k ,\theta)
=\cos \theta \cos \ell \tilde q \pi  + \sin \theta \cos \ell  \tilde p \pi = ( \cos \theta -\sin \theta) (-1)^{\ell \tilde q}\,.
\]
So we have a special case (that the point belongs to the nodal set) when $\theta =\frac \pi 4$.\\

We now perform the change of variable $y \mapsto \frac 1 k  \tilde y + \frac{\ell \pi}{k}$. In this new coordinate, we have, for $y\in ( \frac{\ell \pi}{k}, \frac{(\ell+1)\pi}{k})$
$$
\begin{array}{ll}
\Phi_{p,q}(0 , y) & = \cos \theta  \cos (\ell \tilde q \pi + \tilde q y) + \sin \theta  \cos (\ell \tilde p \pi + \tilde p y) \\
& =  (-1)^{ \ell \tilde q} ( \cos \theta  \cos ( \tilde q y) - \sin \theta  \cos ( \tilde p y))\,.
\end{array}
$$
We are again as in the previous case in the new variables $(\tilde x,\tilde y)$ except that the left corners of the sub-squares (which could be zeros when $\theta =\frac \pi 4$)
 are interior points of the boundary of the initial square. But the analysis there is not difficult  and has already been discussed above. Hence we can generalise the previous lemma in the following way:
 \begin{lem}\label{lem8.5}
If $p =k \tilde p$ and $q=k \tilde q$ with $k\geq 1$, $\tilde p$ and $\tilde q$ mutually prime and $\tilde p+\tilde q$  odd, then we can apply Proposition \ref{Proposition6.1}  on the boundary $x=0\,$.
\end{lem}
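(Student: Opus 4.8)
The plan is to exploit the folding (sub-square) decomposition already set up before the statement: the non-coprime data $(p,q)=(k\tilde p, k\tilde q)$ will be reduced, strip by strip, to the coprime family handled in Lemma~\ref{lem8.3}, while the finitely many sub-square corners sitting on the edge $x=0$ will be treated by hand in the spirit of Lemma~\ref{lem8.2}. Concretely, I would partition $\hat S=(0,\pi)^2$ into the $k^2$ sub-squares of side $\pi/k$ and split the points of $\{x=0\}$ into two types: interior points of sub-square edges, $y\in(\ell\pi/k,(\ell+1)\pi/k)$, and the sub-square corners $z_\ell:=(0,\ell\pi/k)$ with $\ell\in\{1,\dots,k-1\}$ (the endpoints $\ell=0,k$ are corners of $\hat S$ and are covered by Proposition~\ref{Proposition6.2}).

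For a point of the first type I would use the affine rescaling $y=\tfrac1k\tilde y+\tfrac{\ell\pi}{k}$ recorded above, under which $\Phi_{p,q}(0,\cdot,\theta)$ becomes, up to the nonzero constant $(-1)^{\ell\tilde q}$, the function $\cos\theta\,\cos(\tilde q\tilde y)-\sin\theta\,\cos(\tilde p\tilde y)$. Absorbing the sign in front of the second term by the substitution $\theta\mapsto-\theta$, this is exactly the coprime family $\Phi_{\tilde p,\tilde q}(0,\tilde y,\cdot)$ with $\tilde p,\tilde q$ mutually prime and $\tilde p+\tilde q$ odd. Since the rescaling multiplies $\partial_y$ and $\partial_y^2$ by powers of $k$ (hence preserves vanishing versus non-vanishing), leaves $\partial_\theta$ untouched, and since the constant factor and the $\theta$-reflection do not affect any of the conditions in \eqref{6.1a}, Lemma~\ref{lem8.3} guarantees that all four assumptions of Proposition~\ref{Proposition6.1} hold at every such point.

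It then remains to treat the sub-square corners $z_\ell$, which is where the argument genuinely goes beyond a restatement of Lemma~\ref{lem8.3}. Writing $p=k\tilde p$, $q=k\tilde q$, one has $\sin(qy)=\sin(py)=0$ at $y=\ell\pi/k$, so $\partial_y\Phi_{p,q}(z_\ell,\theta)=0$ for every $\theta$: these boundary points are automatically $y$-critical, and the coprime analysis (whose critical points obey $\cos(\tilde p\tilde y)\cos(\tilde q\tilde y)\neq0$) cannot be imported. Using $\tilde p+\tilde q$ odd, hence $(-1)^{\ell\tilde p}=(-1)^{\ell}(-1)^{\ell\tilde q}$, one finds $\Phi_{p,q}(z_\ell,\theta)=(-1)^{\ell\tilde q}\bigl(\cos\theta+(-1)^{\ell}\sin\theta\bigr)$, which vanishes precisely at $\theta_0=\tfrac{3\pi}{4}$ for $\ell$ even and $\theta_0=\tfrac{\pi}{4}$ for $\ell$ odd. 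At this $(z_\ell,\theta_0)$ I would then verify the two remaining conditions of \eqref{6.1a}: a direct evaluation gives $\partial_{yy}^2\Phi_{p,q}(z_\ell,\theta_0)$ equal to a nonzero multiple of $q^2-p^2$ (the parities conspiring, via $\tilde p+\tilde q$ odd, so that the $p^2$- and $q^2$-terms combine with opposite signs), which is nonzero since $p\neq q$; and likewise $\partial_\theta\Phi_{p,q}(z_\ell,\theta_0)=-\sqrt2\,(-1)^{\ell\tilde q}\neq0$. Hence Proposition~\ref{Proposition6.1} applies at every zero critical point of the edge $x=0$, which is the assertion.

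The main obstacle is exactly this last step. The corners $z_\ell$ are interior boundary points at which $\partial_y\Phi$ vanishes for trivial reasons, so one must confirm by hand that the Hessian non-degeneracy and $\theta$-transversality survive; and the only delicate point is to evaluate $\partial_\theta\Phi$ at the \emph{correct} critical value $\theta_0\in\{\pi/4,3\pi/4\}$ dictated by the parity of $\ell$. Evaluating at the wrong value spuriously yields a vanishing $\theta$-derivative, so the parity bookkeeping enforced by $\tilde p+\tilde q$ being odd must be carried through carefully; once it is, both the second- and the $\theta$-derivative are manifestly nonzero and the proof closes.
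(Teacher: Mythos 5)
Your proposal follows essentially the same route as the paper: the sub-square rescaling $y \mapsto \tfrac{1}{k}\tilde y + \tfrac{\ell\pi}{k}$ reduces the interior points of the sub-edges to the coprime case of Lemma~\ref{lem8.3}, and the sub-square corners $(0,\ell\pi/k)$ are then treated separately as interior points of the edge of the big square. Your explicit verification at those corners --- identifying the critical value as $\theta_0=\tfrac{3\pi}{4}$ for $\ell$ even and $\theta_0=\tfrac{\pi}{4}$ for $\ell$ odd, with $\partial_{yy}^2\Phi$ a nonzero multiple of $q^2-p^2$ and $\partial_\theta\Phi=-\sqrt{2}\,(-1)^{\ell\tilde q}$ --- is correct and fills in the details the paper leaves implicit (``the analysis there is not difficult and has already been discussed above''), including the parity bookkeeping that the paper's displayed identity only records for one parity of $\ell$.
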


\begin{rem}
We remark that by the above, if $p$ and $q$ are mutually prime with $p+q$ odd, then the third and fourth
conditions of \eqref{6.2c} are satisfied, and hence Proposition~\ref{Proposition6.2} can be applied.
If $p$ and $q$ are mutually prime with $p+q$ even, then the case $\theta = \arctan(\frac{q}{p} (-1)^{m+n-1}$
has to be checked by another approach, otherwise Proposition~\ref{Proposition6.2} can be applied.
\end{rem}

\section{Application to  non Courant-sharp situations}\label{s9}
It is immediate to see that, due to the monotonicity of the Robin eigenvalues with respect to $h$, the labelling of an eigenvalue  corresponding to the pair $(p,q)$
 can only increase when going from $h=0$ to $h>0\,$. \\
Hence, starting with a Neumann eigenvalue that is not Courant-sharp, in order to show that the corresponding Robin eigenvalue with $h$ sufficiently small is not Courant-sharp,
it is enough to show that the number of nodal domains does not increase  as $h$ (small) increases. \\
We now list the remaining cases, where the arguments of the previous subsections apply.

\subsection{Treatment of the special cases in \cite{HPS1} for $h>0$ small}
In \cite{HPS1}, there were certain Neumann eigenvalues for which the Courant-sharp property could not be determined
by the Pleijel-inspired strategy or by symmetry arguments. To show that these eigenvalues are not Courant-sharp, a more in-depth analysis was required.
Below we consider these special cases and apply the results of Section~\ref{s7}, together with the result for $h=0$ of \cite{HPS1}, to show that the corresponding Robin eigenvalues are not Courant-sharp for $h$ small enough.

\begin{enumerate}
\item The case $\lambda_7=\lambda_8=5$ ($(p,q)=(2,1)$) \\
Because $p$ and $q$ are mutually prime and $p+q$ is odd, the general theory of Section~\ref{s7} applies.
\item The case $\lambda_{21}=\lambda_{22}=20$ ($(p,q)=(4,2)$)\\
By dividing into four sub-squares,  we obtain four copies of the previous situation.
So the results of Section~\ref{s7} apply.
\item  The case $\lambda_{70}=\lambda_{71}=80$ ($(p,q)=(8,4)$)\\
 Similarly, by dividing into sixteen sub-squares, we come back to the situation of item 1.
\item The case $\lambda_{42}=\lambda_{43}=45$ ($(p,q)=(6,3)$)\\
By dividing into nine sub-squares, we come back to the situation of item~1.
\item The case $\lambda_{14}=\lambda_{15}=13$ ($(p,q)=(3,2)$)\\
Because $p$ and $q$ are mutually prime and $(p+q)$ is odd, the general theory of Section~\ref{s7} applies.
\item The case $\lambda_{49}=\lambda_{50}=52$ ($(p,q)=(6,4)$)\\
We can come back to the previous situation by dividing into four sub-squares.
\item The case $\lambda_{18}=\lambda_{19}=17$ ($(p,q)=(4,1))$\\
Because $p$ and $q$ are mutually prime and $p+q$ is odd, the general theory of Section~\ref{s7} applies.
\item The case $\lambda_{66}=\lambda_{67}= 73$ ($(p,q)= (8,3)$)\\
Because $p$ and $q$ are mutually prime and $p+q$ is odd, the general theory of Section~\ref{s7} applies.
\item The case $\lambda_{84}=\lambda_{85}= 97$   ($(p,q)= (9,4)$).\\
Because $p$ and $q$ are mutually prime and $p+q$ is odd, the general theory of Section~\ref{s7} applies.
\item The case $\lambda_{101}=\lambda_{102}=116$ ($(p,q)=(10,4)$).
By dividing into four sub-squares, we come back to the analysis of $(5,2)$. Observing that $5+2=7$ is odd
and that $2$ and $5$ are mutually prime, the general theory of Section~\ref{s7} applies
(see also Proposition~\ref{cor:antisymmetric}).
\end{enumerate}
\subsection{Remaining cases}
 In Section \ref{s5}, there were four cases for which we could not determine whether the Courant-sharp property holds.
This is due to the fact  their multiplicity is larger than 2 for $h=0$.
Three of these cases can be treated by the general theory of Section~\ref{s7} as we see below.
\begin{enumerate}
\item [11.] The case $\lambda_{78,h}=\lambda_{79,h}$ ($h>0$) corresponding to $(p,q) = (6,7)$.
Because $p$ and $q$ are mutually prime and $p+q$ is odd, the general theory of Section~\ref{s7} applies.
 \item[12.]  The case $\lambda_{126,h} =\lambda_{127,h} $($h>0$) corresponding to $(8,9)$. Hence $p$ and $q$ are mutually prime, $p+q$ is odd, and the general theory of Section~\ref{s7} applies.
 \item[13.]  The case $\lambda_{89,h}=\lambda_{90,h}$ ($h>0$) corresponding to $(6,8)$
  By dividing into four sub-squares, we come back to the analysis of
 $(3,4)$ which can be treated observing that $3$ and $4$ are mutually prime and that $3+4$ is odd
  (see also Proposition~\ref{cor:antisymmetric}).
 \end{enumerate}
 In all these cases, the multiplicity becomes $2$ for $h>0\,$.\\
 \subsection{Discussion}
We can treat the cases $(0,p)$ for $p>2$ by invoking Lemma~\ref{lem8.2} and the results of \cite{HPS1}
that the Neumann eigenvalues corresponding to these pairs are not Courant-sharp. The case $(0,1)$ was already discussed in Subsection~\ref{ss:2.3}. \\

 We also recall that we have completely analysed the cases $(p,p)$ for $p\geq 1$ in Section \ref{spp}.\\

 In conclusion, in order to achieve the proof of Theorem \ref{thm:hsmall}, it remains to analyse the following two cases:
  \begin{itemize}
 \item The case $(0,2)$ where our previous analysis does not permit us
 to exclude a possible Courant-sharp situation for $h>0$.
 \item The case $(p,q)=(7,9)$ where $p$ and $q$ are mutually prime but $p+q$ is even.
 \end{itemize}
 For the latter, some special analysis has to be done for $ \tan \theta = \frac79$.

 \subsection{Numerical illustration of the case $p=3$}\label{ss82}
We now illustrate and give a more detailed analysis of the typical case $(0,p)$ in the odd situation: $p=3$.
We recall that it was shown in \cite{HPS1} that the Neumann eigenvalue $\lambda_{10,0}$ corresponding to the pair $(0,3)$
is not Courant-sharp.\\
We set
\begin{equation*}
\Phi_{0,3}(x,y,\theta,h):= \Phi_{h,\theta,0,3} (x,y),
\end{equation*}
for $(x,y) \in (0,\frac{\pi}2)^2$.
Following the steps of the $(0,p)$ analysis, we see that the critical points only occur when $\theta = \frac \pi 4, \frac{3\pi}{4}$.
Below we plot $\Phi_{h,\theta,0,3} (x,y)$ for various values of $\theta$.

\begin{figure}[htp]
\centering
\includegraphics[width=\textwidth]{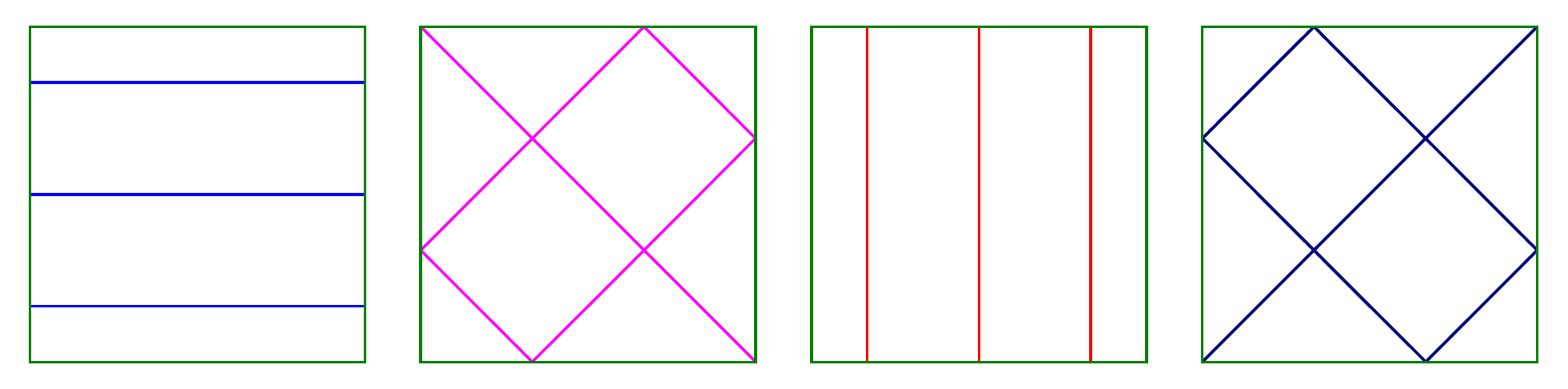}
\caption{The eigenfunction $\Phi_{h,\theta,0,3} (x,y)$ when $h=0$ for $\theta=0$ (blue), $\theta=\frac{\pi}{4}$ (magenta), $\theta =\frac{\pi}2$ (red) and $\theta=\frac{3\pi}4$ (navy).}
\label{fig:0-3N}
\end{figure}

From Figure~\ref{fig:0-3N}, we see that $\Phi_{h,\theta,0,3} (x,y)$ has 4 nodal domains except for the cases
$\theta = \frac \pi 4, \frac{3\pi}{4}$ when it has 8 nodal domains.

For $h$ sufficiently small, we have to concentrate on values of $\theta$ close to $\frac \pi 4$ and to look
at what happens in the neighbourhood of the boundary critical points.

By Remark~\ref{remsym}, since $p=3$ is odd, it is sufficient to consider $\theta =\frac \pi 4\,$.
For the Neumann case with $\theta = \frac \pi 4$, the critical points on the boundary are
$(\frac{\pi}2,-\frac{\pi}2)$, $(-\frac{\pi}2,\frac{\pi}2)$, $(\frac{\pi}6,\frac{\pi}2)$,
    $(\frac{\pi}2,\frac{\pi}6)$, $(-\frac{\pi}2,-\frac{\pi}6)$, $(-\frac{\pi}6,-\frac{\pi}2)$.\\
On the side $x=\frac{\pi}2$, we have that $\Phi_{h,\theta,0,3} (\frac{\pi}2,y)=0$ implies that
\begin{equation}\label{eq:bcpion21}
\tan \theta = -\frac{\cos (\alpha_0 /2)\sin (\alpha_3 y/\pi)}{\sin(\alpha_3 /2)\cos(\alpha_0 y /\pi)}\,.
\end{equation}
Similarly, we have that $\frac{\partial \Phi_{h,\theta,0,3}}{\partial y} (\frac{\pi}2,y)=0$ implies that
\begin{equation}\label{eq:bcpion22}
\tan \theta = \frac{\alpha_3 \cos (\alpha_0 /2)\cos (\alpha_3 y/\pi)}{\alpha_0 \sin(\alpha_3 /2)\cos(\alpha_0 y /\pi)}\,.
\end{equation}
By equating \eqref{eq:bcpion21} and \eqref{eq:bcpion22}, we obtain
\begin{equation}\label{eq:bcpion23}
\alpha_0 \tan\left(\frac{\alpha_0 y}{\pi}\right) = \alpha_3 \cot \left(\frac{\alpha_3 y}{\pi}\right).
\end{equation}
Let $y_c$ denote a solution of \eqref{eq:bcpion23} and set
\begin{equation}\label{eq:bcpion24}
\theta(h) = \arctan \left(-\frac{\cos (\alpha_0 /2)\sin (\alpha_3 y_c/\pi)}{\sin(\alpha_3 /2)\cos(\alpha_0 y_c /\pi)}\right).
\end{equation}
Then for $\theta=\theta(h)$, $(\frac{\pi}2, y_c)$ is a boundary critical point of $\Phi_{h,\theta,0,3} (x,y)$.

For h=0.01, we compute numerically that $y_c \approx 0.5236$.
Below we plot $ \Phi_{h,\theta,0,3}$ for $h=0.01$ and various values of $\theta$ in order to show the changes in the structure of the nodal domains.

 \begin{figure}
\centering
\subfloat[  $\theta=0$ (blue), $\theta=\theta(0.01)$ (orange). \label{10crit1}]{%
  \includegraphics[width=\textwidth]{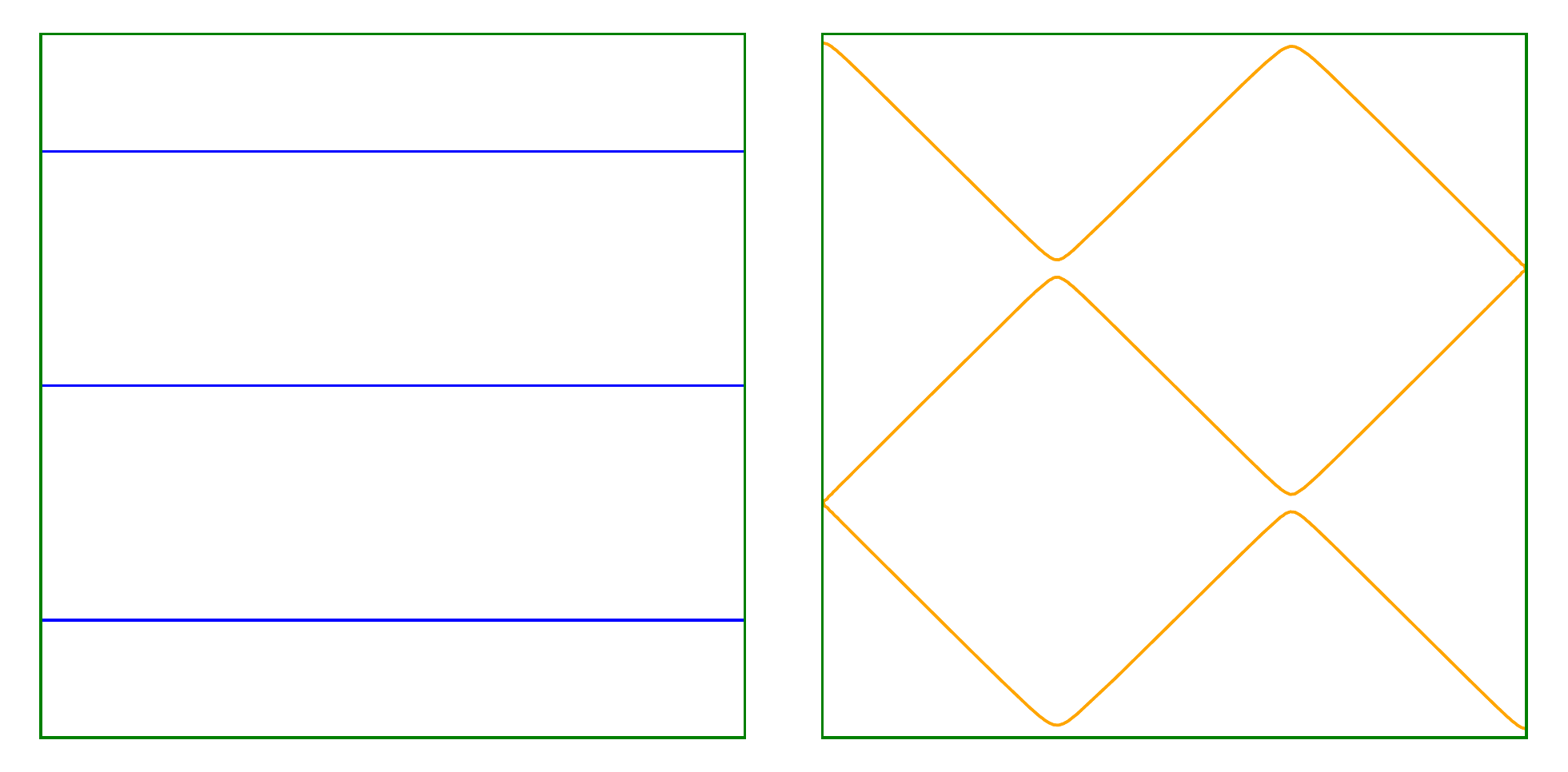}%
  }\par
\subfloat[  $\theta = \frac12 (\theta(0.01) + \frac{\pi}4)$ (purple), $\theta=\frac{\pi}{4}$ (magenta). \label{10crit2}]{%
  \includegraphics[width=\textwidth]{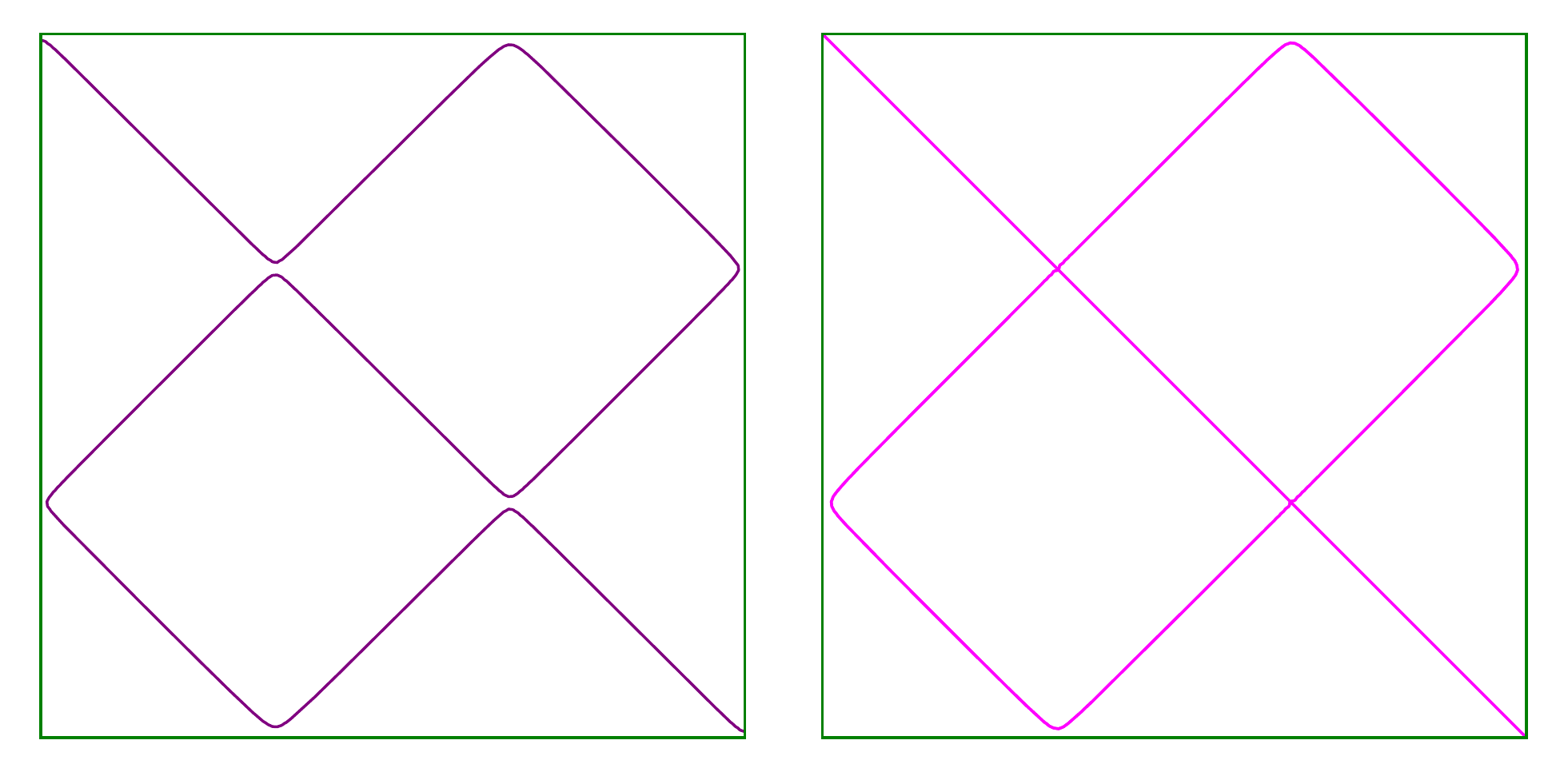}%
  }
\caption{The Robin eigenfunction $\Phi_{h,\theta,0,3}$ for $h=0.01$ and various values of $\theta$.
We note that $\theta(0.01)$ is given by Equation~\eqref{eq:bcpion24} with $h=0.01$.}
\label{fig:10h001}
\end{figure}

From Figure~\ref{fig:10h001} and the above analysis, we see that $ \Phi_{h,\theta,0,3}$ has
\begin{itemize}
\item 0 interior critical points, 6 boundary critical points and 4 nodal domains for $\theta \in [0,\theta(0.01))$,
\item 0 interior critical points, 4 boundary critical points for $\theta = \theta(0.01)$, and 4 nodal domains,
\item 0 interior critical points, 2 boundary critical points for $\theta \in (\theta(0.01),\frac{\pi}4)$, and 2 nodal domains,
\item 2 interior critical points, 2 boundary critical points for $\theta = \frac{\pi}4$, and 4 nodal domains.
\end{itemize}

\section{The case $(p,q)=(0,2)$}\label{s10}
The difficulty is that for $h=0$, we are in the Courant-sharp situation. We will show that the number of nodal domains
decreases  as $h>0$ (small) increases (the labelling  of the eigenvalue is constant in this case).

 We want to analyse the zero set of
 \begin{align*}
 \Phi_{h,\theta, 0,2} (x,y)&:=   \cos \theta \cos (\alpha_0(h) x/\pi)  \cos (\alpha_2(h) y /\pi) \\
 & + \sin \theta \cos (\alpha_2(h) x/\pi)  \cos (\alpha_0(h)  y/\pi)\,.
 \end{align*}
 We recall that the analysis for $h$ large was  obtained
 in \cite{GH}, but we are interested here with  the case where $h$  is small.\\
 We note that for $h=0$ this reads
  $$
 \Phi_{0,2} (x,y,\theta,0):=   \cos \theta   \cos (2 y)+ \sin \theta \cos (2x) \,.
 $$
 For this case, the only values of $\theta$ for which there are critical points are
$\theta = \frac \pi 4$ and $\frac{3\pi}{4}$. For $\theta =\frac \pi 4$, the critical points are only on the boundary
 at the middle of each side, that is $(\pm \frac{\pi}2,0), (0,\pm \frac{\pi}2)$. For $\theta=\frac{3 \pi}{4}$, there  is
 one interior critical point at $(0,0)$ and four boundary critical points at the corners $ (\pm \frac{\pi}2, \frac{\pi}2), (\pm \frac{\pi}2, -\frac{\pi}2)$. In all these cases, we are in a Morse situation.

To analyse the situation for $h$ small  on the boundary $x=-\frac{\pi}2$, we observe that we are still dealing with a Morse function $ \Phi_{h,\theta, 0,2}$ for $(\theta,h)$ close to $(\frac \pi 4,0)$, which admits a critical point $z_{h,\theta}$ close to $z_0 = (-\frac \pi 2,0)$. Hence we still have to consider the Morse picture and the corresponding zero-set.

 \paragraph{Analysis at the boundary for $\theta$ close to $\frac \pi 4$ and $h\geq 0$ close to $0$}~\\
    Here we can analyse the situation at the boundary $x=-\frac \pi 2$ and compare it with the situation on $y=-\frac \pi 2$. We can also deduce from
    \eqref{eq:2.8} that
    $$
    \Phi_{0,2} \left(-\frac \pi 2,y,\theta,h\right)= \Phi_{0,2} \left(y,-\frac \pi 2,\frac \pi 2- \theta,h\right)\,.
    $$
   Note that the understanding of what is going on for $\theta < \frac \pi 4$ on the first side gives the information of what is going on for $\theta >\frac \pi 4$ on the next side.
   In addition $ \Phi_{0,2} $ is symmetric with respect to the two axes. Hence, we can then use these symmetries  to understand the complete picture near the sides.

    For $h\geq 0$, we analyse the zeros of the restriction to $x=-\frac \pi 2$ of the eigenfunction which is given  by
   \begin{equation*}%\label{eq:nodalset5}
 y\mapsto \psi(y,\theta,h):= \Phi_{h,\theta,0,2}(-\pi /2,y) \,.
  \end{equation*}
 \paragraph{The case $h=0\,$}~\\
 For $h=0$, this gives
   \begin{equation*}%\label{eq:nodalset5h=0}
  \psi (y,\theta,0)= \cos \theta   \cos (2 y) - \sin \theta =0  \,,
  \end{equation*}
  which takes the form
  \begin{equation}\label{eq:nodalset5h=0a}
  \cos (2 y )= \tan \theta\,.
  \end{equation}
  Note for later that
  \begin{equation}\label{crit}
    \partial_\theta \psi (0,\pi /4, 0) = -\sqrt{2} \,.
  \end{equation}
  When $\theta$ increases from $0$ to $\frac \pi 4$, we get immediately from \eqref{eq:nodalset5h=0a}  that there is a unique  $y(\theta)$ decreasing  from $y=\frac \pi 4$ to $0$ such that the zero set consists of $\pm y(\theta)$.
  For $\theta=\frac \pi 4$, $y(\frac \pi 4)$ is a non-degenerate critical point of $y\mapsto  \psi(y,\frac \pi 4,0)$. For $\theta >\frac \pi 4$, the eigenfunction has no zero on $x=-\frac \pi 2$. \\

 \paragraph{Application to the global nodal structure ($h=0$)} ~\\
  Using the above symmetries $(x,y,\theta) \mapsto (y,x,\frac \pi 2 -\theta)$, $ (x,y)\mapsto (x,-y)$ and  $ (x,y)\mapsto (-x,y)$ and the fact that there are no critical points inside the square  for $\theta \leq \frac{\pi}2$, we obtain that we start from $3$ nodal domains for $\theta < \frac \pi 4$, then get $5$ nodal domains at $\theta=\frac \pi 4$ before  obtaining $3$ nodal domains for $\theta > \frac \pi 4$. See Figure~\ref{fig:transitionh=0}. Hence we recall that we are in the Courant-sharp situation for $\theta=\frac \pi 4$.

\begin{figure}[h]
  \begin{center}
\includegraphics[width=12cm]{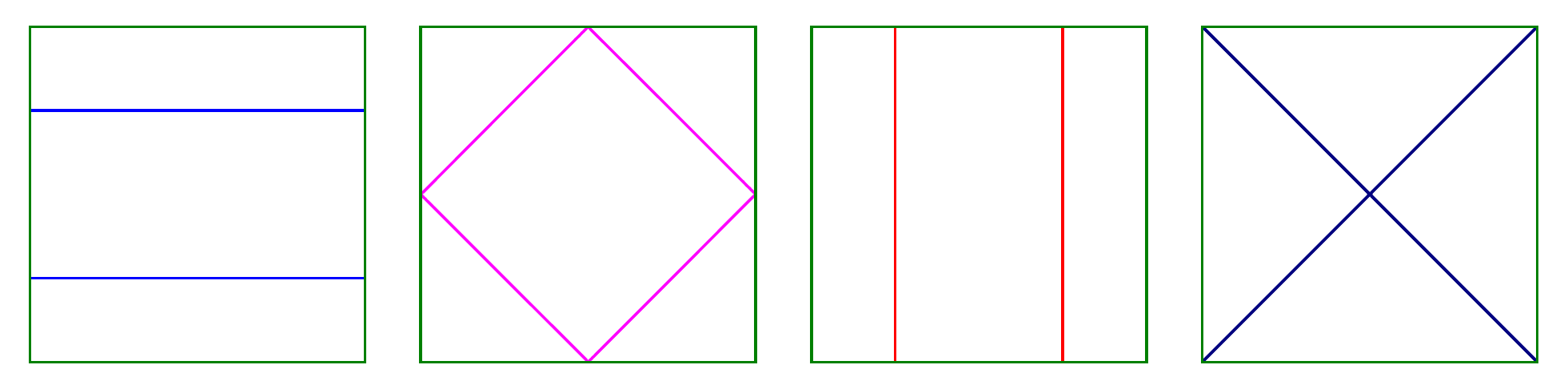}
 \caption{The fifth eigenfunction when $h=0$ for $\theta=0$ (blue), $\theta=\frac{\pi}{4}$ (magenta), $\theta =\frac{\pi}2$ (red) and $\theta=\frac{3\pi}4$ (navy).}
 \label{fig:transitionh=0}
  \end{center}
 \end{figure}

\paragraph{The case $h\geq 0\,$}~\\
For $\theta=\frac \pi 4$ and $h=0$, $y=0$ is a non-degenerate critical point of the Morse function $y \mapsto \psi(y,\frac \pi 4,0)$. On the other hand $y=0$ is always a critical point of $y\mapsto \psi (y,\theta,h)$. By the general properties of Morse functions depending on parameters, $y=0$ is locally the unique critical point
for $(\theta,h)$ close to $(\frac \pi 4,0)$ and is  also
non-degenerate.\\
The determination of the zeros is related to the sign of $\psi (0,\theta,h)$:  two solutions $\pm y(\theta,h)$ if $\psi (0,\theta,h) >0$, of course one double solution $y=0$ if $\psi (0,\theta,h) =0$ and no solution if $\psi (0,\theta,h) <0\,$.\\
For $y=0$ and $x=-\frac \pi 2$, we get
   \begin{equation*}
  \psi (0,\theta,h):= \cos \theta \cos (\alpha_0(h) /2)  + \sin \theta \cos (\alpha_2(h)/2)    \,.
  \end{equation*}
  Hence (in the neighbourhood of $(\frac \pi 4,0)$), due to the negativity of $\partial_\theta \psi$ (see \eqref{crit}), there is a unique $\theta(h)$ such that $ \psi (0,\theta(h),h)  =0$ and corresponding to a change of sign of $\theta \mapsto \psi (0,\theta,h)$.
   It remains to compute $\theta (h)$ for $h\geq 0$ small.
  $$
  \psi (0,\theta,h) = 0 \mbox{ if and only if }  \tan \theta = - \cos (\alpha_0(h)/2) / \cos (\alpha_2(h) /2) \,.
  $$
  We expand the right-hand side and, using the expansions (deduced from \eqref{eq:alphaneven})
  \begin{equation*}%\label{Tay1}
  \alpha_0 (h) \sim \sqrt{2\pi h} \,,\, \alpha_2(h)-2\pi \sim h\,,
  \end{equation*}
  we obtain
  $$
  \tan \theta =  1 - \frac{\pi h}{4} + \mathcal O (h^2)\, ,
  $$
  so
  \begin{equation*}%\label{eq:theta(h)}
  \theta(h) -\frac \pi 4 \sim -\frac{\pi h}{8}\,,
  \end{equation*}
  and in particular
  $\theta(h) < \frac \pi 4 $ for $h>0\,$.
  Hence, we get that for $\theta < \theta(h)$,  there are
  two zeros on the left side $ x=-\frac{\pi}2$ of the square (and consequently the same on the opposite side $ x=\frac{\pi}2$), one double zero for $\theta =\theta (h)$ and no zero for $\theta > \theta(h)$.\\
  Using the symmetry argument, we also get that on the two other sides  there are
  no zeros for $\theta < \frac \pi 2 - \theta(h) = \frac \pi 4  + \frac{\pi h}{8} + \mathcal O (h^2)$, a double zero for $\theta = \frac \pi 2 - \theta(h)$ and two zeros for $\theta > \frac \pi 2 - \theta(h)$.\\
  As soon as $h>0$ is small enough, we have  that
  $$
  \theta (h) < \frac \pi 2 -\theta (h)\,.
  $$
  Assuming that there are  no critical points inside the square, we obtain by topological considerations that the number of nodal domains is $3$ for $ 0 <\theta  \leq \theta(h)$, $2$ for $\theta(h) <\theta < \frac \pi 2 -\theta(h)$ and again $3$ for $\frac \pi 2 -\theta(h) \leq \theta < \frac \pi 2$. In particular, observing that the labelling of the eigenvalue is five, we cannot be  in the Courant-sharp situation for $h>0$ ($h$ small enough).

\paragraph{Analysis at the corner for $\theta$ close to $\frac{3\pi}{4}$}
  It remains to analyse the situation for $\theta$ close to $\frac{3\pi}{4} $ and $h\geq 0$.  We note that
   the critical point $(0,0)$ is stable and independent of $h$ and $\theta$ and the corresponding critical value is $\cos \theta +\sin \theta$. Hence it only appears for $\theta=\frac {3\pi} 4$.\\

  Considering the corners, say for example the corner $(-\frac \pi 2,\frac \pi 2)$, we again observe that, for any $h\geq 0$,  this is a critical point for $\theta =\frac{3\pi}{4}$.
  As we have seen, there are no other values of $\theta$ close to $\frac{3\pi}{4}$ such that we have a zero critical  point.
  It is not difficult to show that $ \Phi_{h,\theta,0,2}(x,y)$ satisfies the conditions \eqref{6.1c} for $z_0=(-\frac{\pi}2, \frac{\pi}2)$ and $\theta=\frac{3\pi}{4}$. Hence Proposition~\ref{Proposition6.2} applies
  and in a neighbourhood of the corner, there are exactly 2 nodal domains.

So the number of nodal domains  for $\frac{\pi}2 < \theta < \pi$ is $3$ or $4$.
In particular we have proved:
\begin{prop}
There exists $h_0 >0$ such that for $h\in (0,h_0)$ the eigenvalue $\lambda_{5,h}$ is not Courant-sharp.
In particular, there are three critical values $\theta_j^*(h) \in [0,\pi)$ $(j=1,2,3)$ such that
$$
 \theta_1^*(h) =  \arctan \left(-  \frac{\cos (\alpha_0 /2)}{\cos(\alpha_2 /2)}\right)\,, \,\theta_2^*(h) = \frac \pi 2 - \theta_1^*(h)\,, \,\theta_3^*= \frac {3\pi}{4}\,,
 $$
 and such that $ \Phi_{h,\theta,0,2}$ has:
 \begin{itemize}
\item  $3$ nodal domains for $\theta \in [0,\theta^*_1(h)]$;
\item $2$ nodal domains for $\theta \in (\theta_1^*(h),\theta^*_2(h))$;
\item $3$ nodal domains for  $\theta \in [\theta_2^*(h),\theta_3^*)$;
\item $4$ nodal domains for $\theta=\theta_3^*$;
\item $3$ nodal domains for $\theta\in (\theta_3^*,\pi)$.
\end{itemize}
   \end{prop}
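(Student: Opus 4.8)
The plan is to show that, over the whole parameter circle $\theta\in[0,\pi)$ that generates the (two–dimensional, and by Lemma~\ref{lem:decay} constant–multiplicity) eigenspace of $\lambda_{5,h}$, the family $\Phi_{h,\theta,0,2}$ never attains five nodal domains once $h>0$. Since the label of this eigenvalue is five, it suffices to prove $\mu(\Phi_{h,\theta,0,2})\le 4$ for all $\theta$ and all $h\in(0,h_0)$. By the Morse–theoretic framework of Sections~\ref{s6} and~\ref{s7}, the nodal count is locally constant in $(\theta,h)$ except where a boundary zero is created or destroyed or where an interior critical point meets the zero set; for $\Phi_{0,2}$ these exceptional values cluster only near $\theta=\frac\pi4$ (mid–side critical points) and $\theta=\frac{3\pi}4$ (corner and central critical points), so the whole argument reduces to a local study near these two values.

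Near $\theta=\frac\pi4$ I would restrict to the side $x=-\frac\pi2$ and treat $\psi(y,\theta,h):=\Phi_{h,\theta,0,2}(-\frac\pi2,y)$ as a Morse function of $y$. For $(\theta,h)$ close to $(\frac\pi4,0)$ the point $y=0$ is its unique, nondegenerate critical point, so the number of zeros of $\psi(\cdot,\theta,h)$ (two, one, or none) is governed solely by the sign of $\psi(0,\theta,h)$. Solving $\psi(0,\theta,h)=0$ gives $\tan\theta=-\cos(\alpha_0(h)/2)/\cos(\alpha_2(h)/2)$, and here the asymptotics extracted from \eqref{eq:alphaneven}, namely $\alpha_0(h)\sim\sqrt{2\pi h}$ and $\alpha_2(h)-2\pi\sim h$, yield $\tan\theta=1-\frac{\pi h}4+\mathcal O(h^2)$, hence a transition value $\theta_1^*(h)$ with $\theta_1^*(h)-\frac\pi4\sim-\frac{\pi h}8<0$. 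The symmetry $\Phi_{h,\theta,0,2}(-\frac\pi2,y)=\Phi_{h,\frac\pi2-\theta,0,2}(y,-\frac\pi2)$ coming from \eqref{eq:2.8}, together with reflection in the two axes, transports this to the adjacent sides and produces the matching transition $\theta_2^*(h)=\frac\pi2-\theta_1^*(h)>\frac\pi4$. The decisive point is that the single coincidence at $\theta=\frac\pi4$ responsible for the five nodal domains when $h=0$ \emph{splits} into the two distinct values $\theta_1^*(h)<\frac\pi4<\theta_2^*(h)$; on the open window $(\theta_1^*(h),\theta_2^*(h))$ all four sides are zero–free, so — once interior critical points are excluded — only two nodal domains survive, giving the counts $3,2,3$ on $[0,\theta_1^*]$, $(\theta_1^*,\theta_2^*)$, $[\theta_2^*,\frac{3\pi}4)$.

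Near $\theta=\frac{3\pi}4$ I would handle the corners and the centre separately. The interior critical point $(0,0)$ is stable and $h$–independent with critical value $\cos\theta+\sin\theta$, which vanishes only at $\theta=\frac{3\pi}4$, so it contributes a single interior crossing exactly there; at each corner, I would verify the hypotheses \eqref{6.1c} for $\Phi_{h,\theta,0,2}$ with $z_0$ a corner and $\theta_0=\frac{3\pi}4$, and invoke Proposition~\ref{Proposition6.2} to conclude that each corner neighbourhood carries exactly two nodal domains. Assembling the local pictures gives four nodal domains at $\theta=\frac{3\pi}4$ and three on either side, so the global maximum of $\mu$ over $\theta\in[0,\pi)$ is four, attained only at $\theta_3^*=\frac{3\pi}4$. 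As $4<5$, no eigenfunction in the $\lambda_{5,h}$ eigenspace can have five nodal domains, and the eigenvalue is not Courant–sharp; taking $h_0$ smaller if necessary validates the precise transition list in the statement.

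I expect the main obstacle to be the splitting computation near $\theta=\frac\pi4$: everything hinges on the sign of $\theta_1^*(h)-\frac\pi4$, and that sign is decided by the competition between the two modes — the Neumann ground mode $\alpha_0$ moving like $\sqrt{2\pi h}$ while $\alpha_2$ shifts only at order $h$. One must be sure that this asymmetry is captured correctly (including its sign and that the remainder is genuinely $\mathcal O(h^2)$), since it is exactly what forces $\theta_1^*(h)<\frac\pi4$ and collapses the count to two between the transitions rather than letting it climb to five. The secondary technical burden is the exclusion of interior zero critical points for $\theta\in(\theta_1^*(h),\frac{3\pi}4)$ and the verification that the corner hypotheses \eqref{6.1c} persist for $h>0$, so that Proposition~\ref{Proposition6.2} legitimately applies.
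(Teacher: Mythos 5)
Your proposal is correct and follows essentially the same route as the paper's own proof in Section~\ref{s10}: restriction to the side $x=-\frac{\pi}{2}$, the Morse function $\psi(y,\theta,h)$ with its unique non-degenerate critical point at $y=0$, the transition equation $\tan\theta=-\cos(\alpha_0(h)/2)/\cos(\alpha_2(h)/2)$ with the asymptotics $\alpha_0(h)\sim\sqrt{2\pi h}$, $\alpha_2(h)-2\pi\sim h$ yielding $\theta_1^*(h)-\frac{\pi}{4}\sim-\frac{\pi h}{8}$, the symmetry giving $\theta_2^*(h)=\frac{\pi}{2}-\theta_1^*(h)$, and the treatment of $\theta=\frac{3\pi}{4}$ via the stable central critical point and Proposition~\ref{Proposition6.2} at the corners. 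Even the caveats you flag (excluding interior zero critical points, verifying \eqref{6.1c} at the corners) are exactly the points the paper itself handles by the same stability arguments.
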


 We remark the preceding result is the analogue of Proposition 6.1 of \cite{GH} for the case where
$h>0$ is large.

 In Figure~\ref{fig:5h001}, for $h=0.01$, we depict the transitions between the nodal partitions of $ \Phi_{h,\theta,0,2}(x,y)$ for $(x,y) \in (-\frac{\pi}2,\frac{\pi}2)^2$, as $\theta$ varies.

 \begin{figure}
\centering
\subfloat[  $\theta=0$ (blue), $\theta=\theta_1^*(0.01)$ (magenta), $\theta=\frac{\pi}4$ (red).\label{5crit1}]{%
  \includegraphics[width=\textwidth]{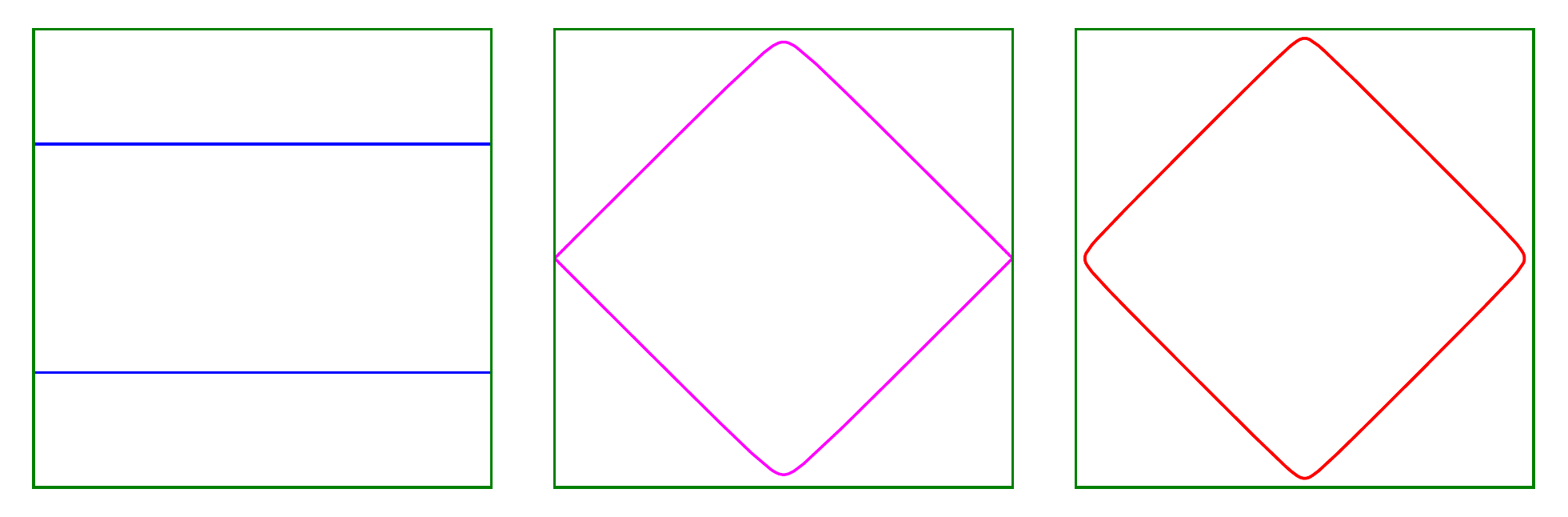}%
  }\par
\subfloat[  $\theta =\theta_2^*(0.01)$ (orange), $\theta=\frac{\pi}{2}$ (lime), $\theta=\frac{3\pi}{4}$ (navy). \label{5crit2}]{%
  \includegraphics[width=\textwidth]{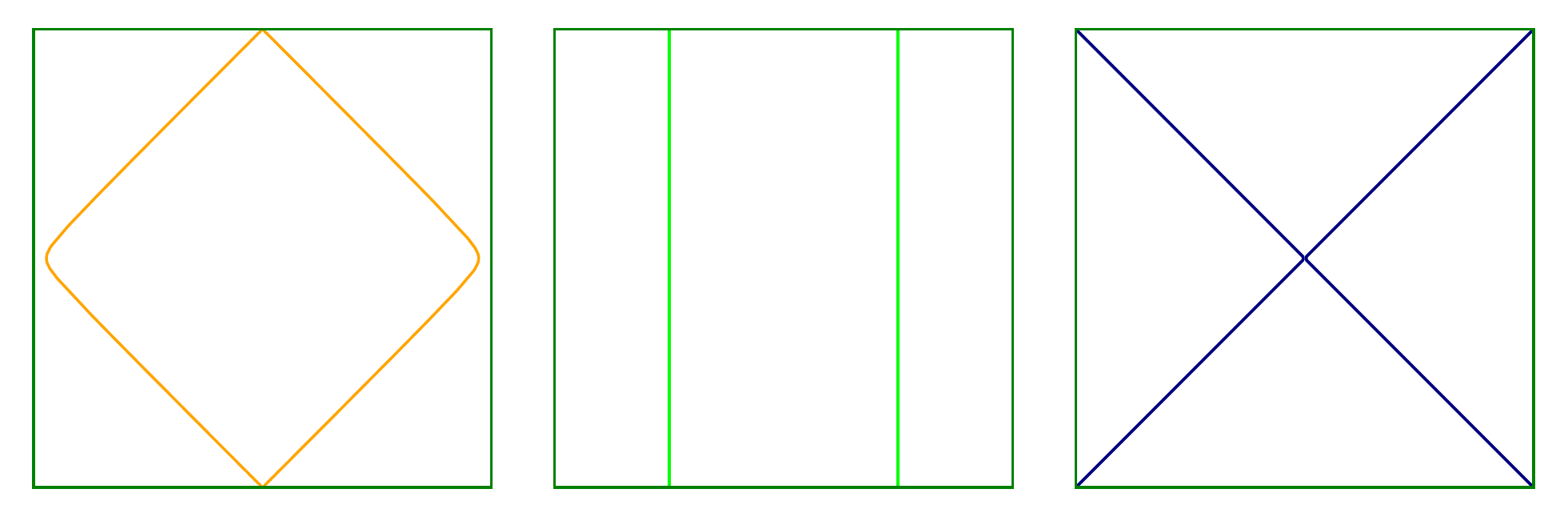}%
  }
\caption{The Robin eigenfunction $ \Phi_{h,\theta,0,2}$ for $h=0.01$ and various values of $\theta$.}
\label{fig:5h001}
\end{figure}
We observe that, by the preceding analysis, $ \Phi_{h,\theta,0,2}$ has
\begin{itemize}
\item 0 interior critical points and 4 boundary critical points for $\theta \in [0,\theta_1^*(h))$,
\item 0 interior critical points and 2 boundary critical points for $\theta = \theta_1^*(h)$,
\item 0 interior critical points and 0 boundary critical points for $\theta \in (\theta_1^*(h),\theta_2^*(h))$,
\item 0 interior critical points and 2 boundary critical points for $\theta = \theta_2^*(h)$,
\item 0 interior critical points and 4 boundary critical points for $\theta \in (\theta_2^*(h),\theta_3^*)$,
\item 1 interior critical point and 4 boundary critical points for $\theta = \theta_3^*$,
\item 0 interior critical points and 4 boundary critical points for $\theta \in (\theta_3^*,\pi)$.
\end{itemize}

\section{The case $(p,q)=(7,9)$}\label{s11}

On $\hat{S} = (0,\pi)^2$, the Robin eigenfunction for $h \geq 0$ corresponding to $(p,q)=(7,9)$ is
\begin{align*}
\Phi_{7,9,\theta,h}(x,y) &= \cos\theta \sin \left(\frac{\alpha_7(h) x}{\pi} - \frac{\alpha_7(h)}2\right)
\sin\left(\frac{\alpha_9(h) y}{\pi} - \frac{\alpha_9(h)}2\right) \\
& \ \ \ \ \ \ +
\sin\theta \sin\left(\frac{\alpha_9 (h)x}{\pi} - \frac{\alpha_9(h)}2\right)
\sin \left(\frac{\alpha_7(h) y}{\pi} - \frac{\alpha_7(h)}2\right).
\end{align*}

We focus on the value of $\theta$ for which $\tan\theta = \frac79$ as our preceding analysis does
not apply in this case (see Remark~\ref{rem:8.4}).

By Sturm's theorem, on any boundary edge of $\hat{S}$, $\Phi_{7,9,\theta,h}$ has at least 7 zeros and at most 9 zeros.
In particular, for $h=0$, we see that on a given side, say $y=\pi$, there are 6 zeros where the nodal set meets
the boundary $\partial \hat{S}$ transversally (see  the central figure in Figure~\ref{fig:79}). Standard Morse theory applies in a neighbourhood of each of these zeros so each such critical point is isolated under a small perturbation of $h$.

\begin{figure}[htp]
\centering
\includegraphics[width=\textwidth]{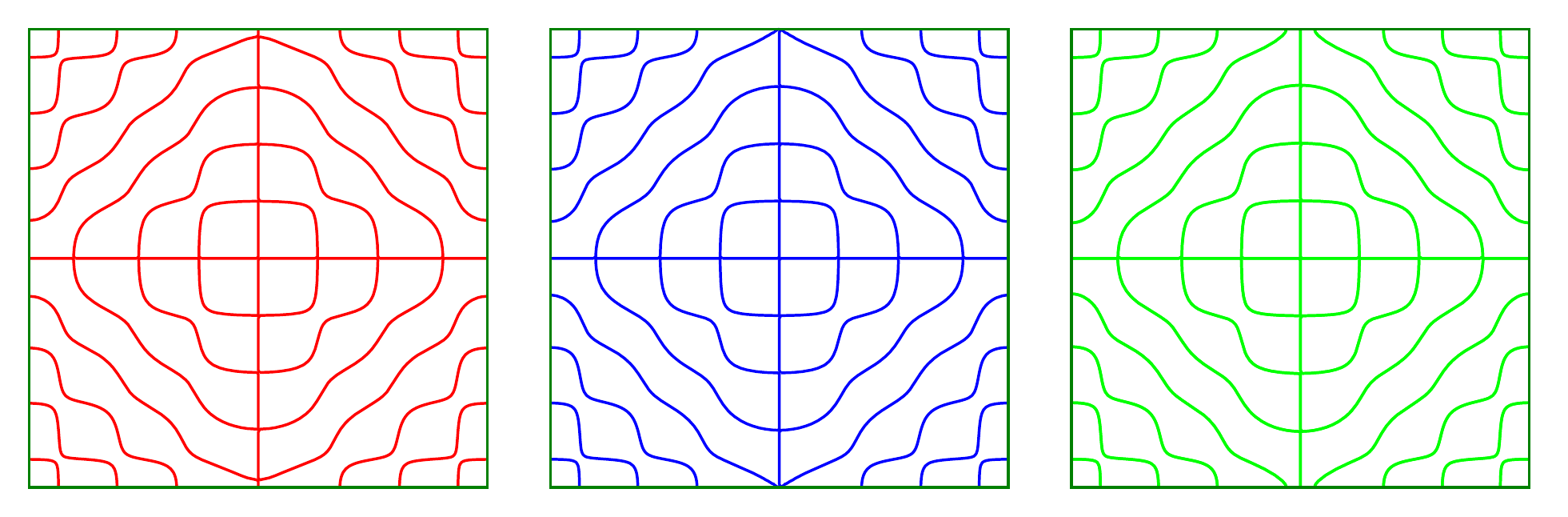}
\caption{The nodal set of the Neumann eigenfunction corresponding to $(p,q)=(7,9)$
 for $\theta = \frac{13\pi}{64}$ (red curve), $\theta = \arctan(\frac79)$ (blue curve) and $\theta = \frac{14\pi}{64}$
(green curve).}
\label{fig:79}
\end{figure}

For the point $ z_0 = (\frac{\pi}2, 0)$, we must analyse how the nodal set in the neighbourhood $B(z_0,\varepsilon_0)$ changes under a small perturbation of $h$.
As we increase $h>0$ (small), by Sturm's theorem, we either obtain:
\begin{enumerate}
\item[(i)] 2 additional zero critical points on $y=0\,$,
\item[(ii)] 1 additional zero critical point on $ y=0\,$,
\item[(iii)] no additional zero critical points on $ y=0\,$.
\end{enumerate}

We observe that $x=\frac{\pi}2$ belongs to the nodal set for any $\theta$ and $y \in (0,\pi)$.
We also see that $\Phi_{7,9,\theta,h}(\pi-x,y) = -\Phi_{7,9,\theta,h}(x,y)$. So if $(x,y)$
belongs to the nodal set of $\Phi_{7,9,\theta,h}(x,y)$ then so does $(\pi-x,y)$.
This eliminates possibility (ii).

We introduce the following notation to facilitate our discussion.

Let $\hat{B}(z_0,\epsilon_0)=B(z_0,\epsilon_0) \cap \hat{S}$.
Define $\partial_{B} \hat{B}:= \partial \hat{B}(z_0,\epsilon_0) \cap \partial \hat{S}$, which is an interval, and $\partial_{I} \hat{B}:= \partial \hat{B}(z_0,\epsilon_0) \setminus \partial_{B} \hat{B}(z_0,\epsilon_0)$, which is a semi-circle.

By Lemma 5.3 of \cite{GH}, we can choose $\varepsilon_0$ small enough such that via a small perturbation of $h$,
it is not possible to obtain a nodal domain $\omega \subset \hat{B}(z_0,\epsilon_0)$ such that $\partial \omega \cap \partial_{B} \hat{B}$ consists of at most finitely many points.
We are not able to exclude the possibility that via a small perturbation of $h$, a nodal domain $\omega$ is obtained
such that $\partial \omega \cap \partial_{B} \hat{B}$ is a non-trivial interval. This is because we start from the
Neumann case $h=0$ for which Lemma 5.3 of \cite{GH} does not apply.

By the local structure of a Neumann eigenfunction, when $h=0$ there are 3 distinct nodal lines emanating
from $z_0$ and intersecting $\partial_{I} \hat{B}$ in 3 distinct points. This gives rise to 4 nodal domains.
After a small perturbation of $h$, there are still 3 points that belong to the intersection of the nodal set
with $\partial_{I} \hat{B}$.

Let $N$ denote the nodal set.
If $N \cap \partial_{I} \hat{B}$ contains 3 points and $N \cap \partial_{B} \hat{B}$ contains 3 points,
then $\hat{B}(z_0,\varepsilon_0)$ contains at most 6 nodal domains.
In this case, there are at most two nodal domains whose boundaries intersect $\partial_{B} \hat{B}$ on a
non-trivial interval and do not intersect $\partial_{I} \hat{B}$.

If $N \cap \partial_{I} \hat{B}$ contains 3 points and $N \cap \partial_{B} \hat{B}$ contains 1 point,
then $\hat{B}(z_0,\varepsilon_0)$ contains at most 4 nodal domains (there are no other possibilities in this case
since $\epsilon_0$ was chosen small enough above so that Lemma 5.3 of \cite{GH} applies).

Hence, after a small perturbation of $h$, we gain at most two  additional nodal domains in $\hat{B}(z_0,\varepsilon_0)$. Taking into account that we have two such boundary critical points, the maximum number
of  additional nodal domains that we gain under a small perturbation of $h$ is 4.

For $h=0$, $\Phi_{7,9,\theta,0}$ has 32 nodal domains. So for $h$ small, $\Phi_{7,9,\theta,h}$
has at most 36 nodal domains. But the pair $(7,9)$ corresponds to $\lambda_{116,h}$, so $\lambda_{116,h}$
is not Courant-sharp for $h$ sufficiently small.

 In Figure~\ref{fig:79v2} we plot the nodal set of $\Phi_{7,9,\theta,h}(x,y)$ for $(x,y)$ in a neighbourhood of $(\frac{\pi}2,0)$, $h=0.1$ and $\theta = \frac{13\pi}{64}, \arctan(\frac79), \frac{14\pi}{64}$ respectively.
 We see that for $h=0.1$, the number of nodal domains in $\hat{B}(z_0,\varepsilon_0)$ is at most 4 as $\theta$
close to $\arctan(\frac79)$ varies.

\begin{figure}[htp]
\centering
\includegraphics[width=\textwidth]{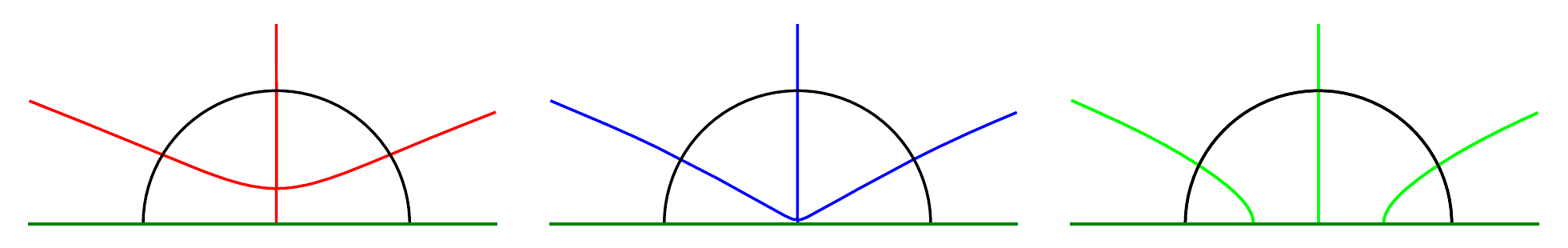}
\caption{The nodal set of the Robin eigenfunction corresponding to $(p,q)=(7,9)$ with $h=0.1$
for $\theta = \frac{13\pi}{64}$ (red curve), $\theta = \arctan(\frac79)$ (blue curve) and $\theta = \frac{14\pi}{64}$
(green curve). We are interested in the behaviour near the zero critical point $(\frac \pi 2,0)$. }
\label{fig:79v2}
\end{figure}

In Figure~\ref{fig:79szoom}, we focus on the case $\theta=\arctan(\frac79)$. We see that for $h=0$, $(\frac{\pi}2,0)$ is a triple point, whereas for $h=0.1$ we no longer have a triple point.

\begin{figure}[htp!]
\centering
\includegraphics[width=5.5cm]{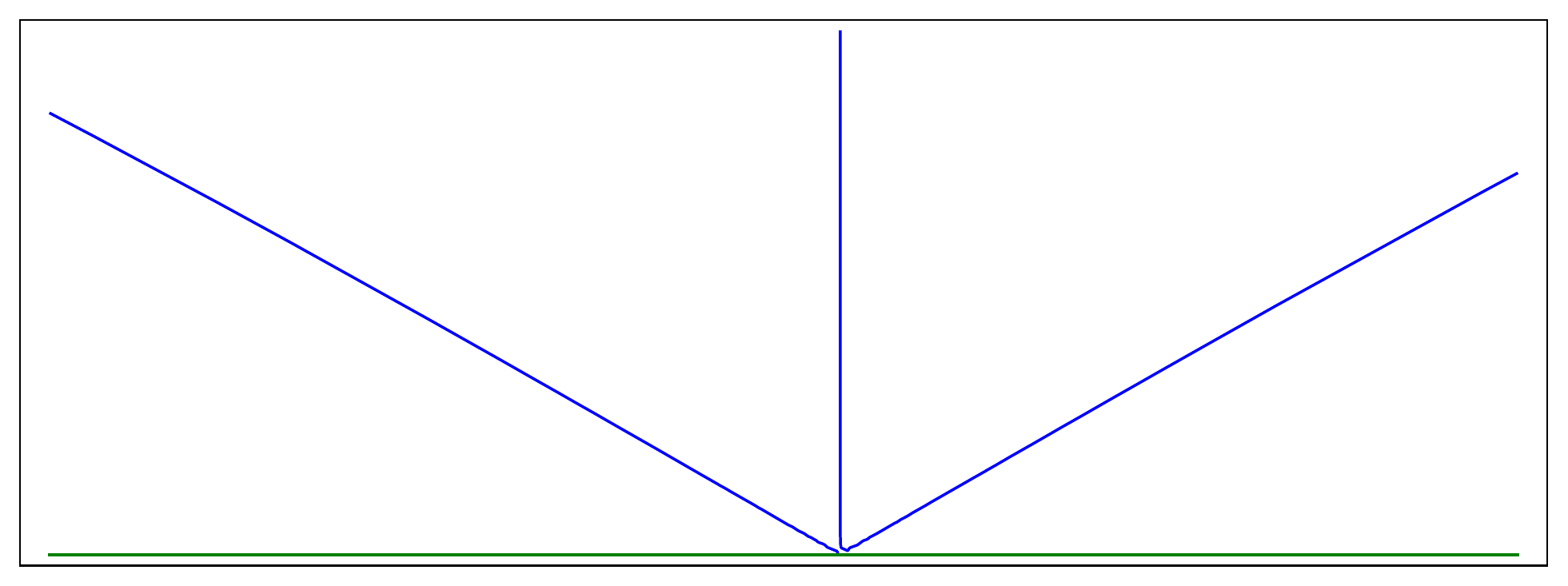}
\hskip 1cm
\includegraphics[width=5.5cm]{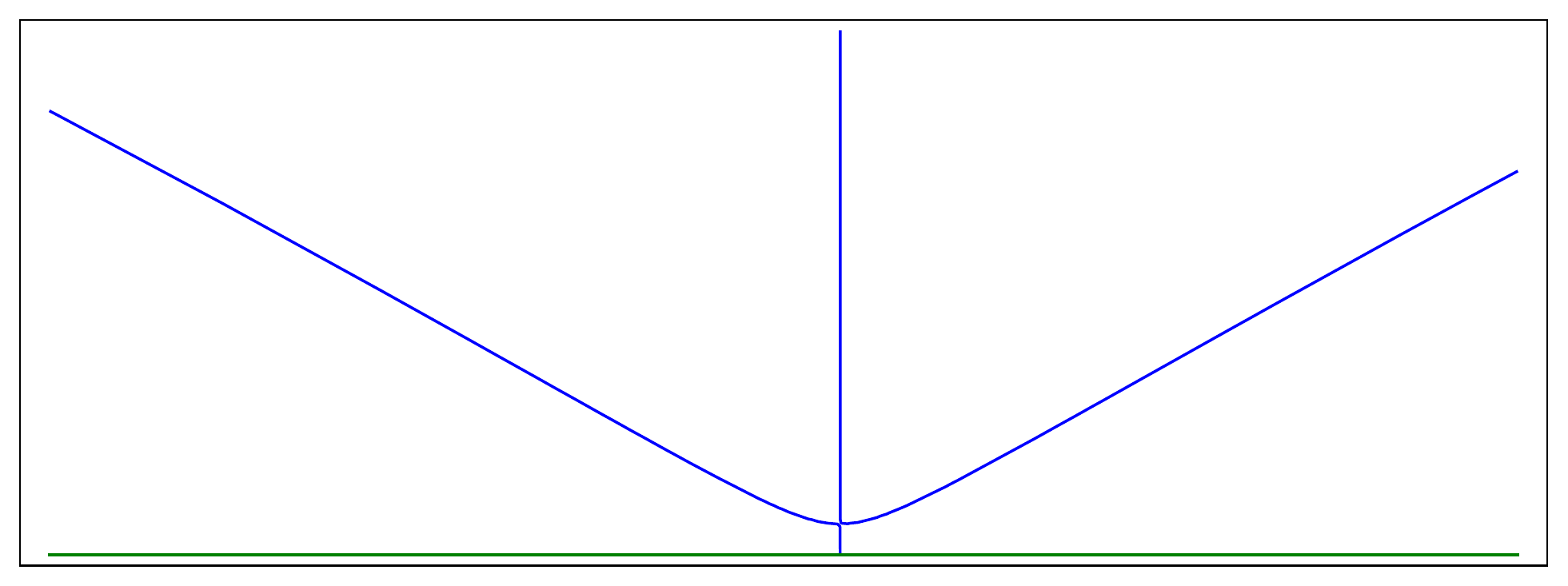}
\caption{The nodal set of the Neumann eigenfunction (left), and the Robin eigenfunction with $h=0.1$ (right) corresponding to $(p,q)=(7,9)$ and $\theta=\arctan(\frac79)$.}
\label{fig:79szoom}
\end{figure}

\newpage

\appendix

\section{First Neumann eigenvalues of a square}\label{sA}

\begin{tabular}{|c  c  c  c |}%Neumann
\hline
Neumann & & & \\
\hline
$m$ & $n$ & $m^2+n^2$ & $k$\\
\hline
0 & 0 & 0 & 1\\
1 &	0 & 1 &	2,3\\
0 & 1 & 1 & 2,3\\
1 &	1 &	2 &	4\\
2 &	0 &	4 &	5,6\\
0 &	2	& 4 &	5,6\\
2	& 1 &	5 &	7,8\\
1 &	2 &	5 &	7,8\\
2 &	2 &	8 &	9\\
3 &	0 &	9 &	10,11\\
0 &	3 &	9 &	10,11\\
3 &	1 &	10 &	12,13\\
1 &	3 &	10 &	12,13\\
3 &	2 &	13 &	14,15\\
2 &	3 &	13 &	14,15\\
4 &	0 &	16 &	16,17\\
0 &	4 &	16 &	16,17\\
4 &	1 &	17 &	18,19\\
1 &	4 &	17 &	18,19\\
3 &	3 &	18 &	20\\
4 &	2 &	20 &	21,22\\
2 &	4 &	20 &	21,22\\
5 &	0 &	25 &	23,24,25,26\\
0 &	5 &	25 &	23,24,25,26\\
4 &	3 &	25 &	23,24,25,26\\
3 &	4 &	25 &	23,24,25,26\\
5 &	1 &	26 &	27,28\\
1 &	5 &	26 &	27,28\\
5 &	2 &	29 &	29,30\\
2 &	5 &	29 &	29,30\\
4 &	4 &	32 &	31\\
5 &	3 &	34 &	32,33\\
3 &	5 &	34 &	32,33\\
6 &	0 &	36 &	34,35\\
0 &	6 &	36 &	34,35\\
6 &	1 &	37 &	36,37\\
1 &	6 &	37 &	36,37\\
6 &	2 &	40 &	38,39\\
2 &	6 &	40 &	38,39\\
5 &	4 &	41 &	40,41\\
4 &	5 &	41 &	40,41\\
6 &	3 &	45 &	42,43\\
3 &	6 &	45 &	42,43\\
7 &	0 &	49 &	44,45\\
0 &	7 &	49 &	44,45\\
\hline
\end{tabular}
\quad
\begin{tabular}{|c  c  c  c |} %Neumann
\hline
Neumann & & & \\
\hline
$m$ & $n$ & $m^2+n^2$ & $k$\\
\hline
7 &	1 &	50 &	46,47,48\\
5 &	5 &	50 &	46,47,48\\
1 &	7 &	50 &	46,47,48\\
6 &	4 &	52 &	49,50\\
4 &	6 &	52 &	49,50\\
7 &	2 &	53 &	51,52\\
2 &	7 &	53 &	51,52\\
7 &	3 &	58 &	53,54\\
3 &	7 &	58 &	53,54\\
6 &	5 &	61 &	55,56\\
5 &	6 &	61 &	55,56\\
8 &	0 &	64 &	57,58\\
0 &	8 &	64 &	57,58\\
8 &	1 &	65 &	59,60,61,62\\
1 &	8 &	65 &	59,60,61,62\\
7 &	4 &	65 &	59,60,61,62\\
4 &	7 &	65 &	59,60,61,62\\
8 &	2 &	68 &	63,64\\
2 &	8 &	68 &	63,64\\
6 &	6 &	72 &	65\\
8 &	3 &	73 &	66,67\\
3 &	8 &	73 &	66,67\\
7 &	5 &	74 &	68,69\\
5 &	7 &	74 &	68,69\\
8 &	4 &	80 &	70,71\\
4 &	8 &	80 &	70,71\\
9 &	0 &	81 &	72,73\\
0 &	9 &	81 &	72,73\\
9 &	1 &	82 &	74,75\\
1 &	9 &	82 &	74,75\\
9 &	2 &	85 &	76,77,78,79\\
2 &	9 &	85 &	76,77,78,79\\
7 &	6 &	85 &	76,77,78,79\\
6 &	7 &	85 &	76,77,78,79\\
8 &	5 &	89 &	80,81\\
5 &	8 &	89 &	80,81\\
9 &	3 &	90 &	82,83\\
3 &	9 &	90 &	82,83\\
9 &	4 &	97 &	84,85\\
4 &	9 &	97 &	84,85\\
7 &	7 &	98 &	86\\
10 & 0 & 100 &	87,88,89,90\\
0 &	10 & 100 &	87,88,89,90\\
8 &	6 &	100	& 87,88,89,90\\
6 &	8 &	100	& 87,88,89,90\\
\hline
\end{tabular}

\begin{tabular}{|c  c  c  c |} %Neumann
\hline
Neumann & & & \\
\hline
$m$ & $n$ & $m^2+n^2$ & $k$\\
\hline
10 & 1 & 101 &	91,92\\
1 &	10 & 101 &	91,92\\
10 & 2 & 104 & 93,94\\
2 &	10 & 104 & 93,94\\
9 & 5 & 106 & 95,96\\
5 &	9 &	106 & 95,96\\
10 & 3 & 109 & 97,98\\
3 &	10 & 109 & 97,98\\	
8 &	7 &	113 & 99,100\\
7 &	8 &	113 & 99,100\\
10 & 4 & 116 & 101,102\\
4 &	10 & 116 & 101,102\\	
9 &	6 &	117 & 103,104\\
6 &	9 &	117 & 103,104\\	
11 & 0 & 121 & 105,106\\
0 &	11 & 121 & 105,106\\	
11 & 1 & 122 & 107,108\\
1 &	11 & 122 & 107,108\\	
11 & 2 & 125 & 109 - 112\\
2 &	11 & 125 & 109 - 112\\	
10 & 5 & 125 & 109 - 112\\	
5 &	10 & 125 & 109 - 112\\	
8 &	8 &	128 & 113\\
11 & 3 & 130 & 114 - 117\\
3 &	11 & 130 & 114 - 117\\	
9 &	7 &	130 & 114 - 117\\	
7 &	9 &	130 & 114 - 117\\	
10 & 6 & 136 & 118,119\\
6 &	10 & 136 & 118,119\\
11 & 4 & 137 & 120,121\\
4 &	11 & 137 & 120,121\\	
12 & 0 & 144 & 122,123\\
0 &	12 & 144 & 122,123\\
12 & 1 & 145 & 124 - 127\\
9 &	8 &	145 & 124 - 127\\	
8 &	9 &	145 & 124 - 127\\	
1 &	12 & 145 & 124 - 127\\	
11 & 5 & 146 & 128,129\\
5 &	11 & 146 & 128,129\\	
\hline
\end{tabular}

\newpage

\end{document}